\newcounter{abci}\renewcommand{\theabci}{\alph{abci}}
\theoremstyle{plain}
\newtheorem{lem}{Lemma}
\newtheorem*{lem*}{Lemma}
\newtheorem{thm}{Theorem}
\newtheorem{prop}{Proposition}
\newtheorem*{prop*}{Proposition}
\newtheorem{assum}{Assumption}
\newtheorem{Def}{Definition}
\newtheorem{cor}{Corollary}
\newtheorem*{cor*}{Corollary}
\theoremstyle{definition}
\newtheorem{ex}{Example}
\newtheorem{rem}{Remark}
\def \cT {\mathcal{T}}
\def \cR {\mathcal{R}}
\def\cN{{\mathcal{N}}}
\def \cH {\mathcal{H}}
\def\be{\begin{equation}}
\def\ee{\end{equation}}
\title{Invariance and attraction properties \\of Galton-Watson trees}
\author{Yevgeniy Kovchegov\footnote{Department of Mathematics, Oregon State University, Corvallis, OR 97331-4605} 
\quad and \quad 
Ilya Zaliapin\footnote{Department of Mathematics and Statistics, University of Nevada Reno, NV 89557-0084}}
\date{}
\begin{document}

\maketitle

\begin{abstract}
We give a description of invariants and 
attractors of the critical and subcritical Galton-Watson tree measures under the 
operation of Horton pruning (cutting tree leaves with subsequent series reduction). 
Under a regularity condition, the class of invariant measures consists of
the critical binary Galton-Watson tree and a one-parameter family of critical Galton-Watson trees with 
offspring distribution $\{q_k\}$ that has a power tail 
$q_k\sim Ck^{-(1+1/q_0)}$, where $q_0\in(1/2,1)$.  
Each invariant measure has a non-empty domain of attraction under consecutive 
Horton pruning, specified by the tail behavior of the initial 
Galton-Watson offspring distribution.
The invariant measures satisfy the Toeplitz property for the
Tokunaga coefficients and obey the Horton law with exponent $R = (1-q_0)^{-1/q_0}$.
\end{abstract}

%\tableofcontents

%%%%%%%%%%%%%%%%%
\section{Introduction and motivation}
The study of random trees invariant with respect to combinatorial pruning 
(erasure) from leaves down to the root emerges in attempts to understand symmetries
of natural trees observed in fields as diverse as hydrology, phylogenetics, or computer science.  
In addition, it provides a unifying framework for analysis of
coalescence and annihilation dynamical models, including the celebrated Kingman's 
coalescent, and self-similar stochastic processes on the real line; 
see a recent survey \cite{KZsurvey} for details.
A special place in the invariance studies is occupied by the family of Galton-Watson trees,
whose transparent generation mechanism makes it a convenient testbed for
general theories and approaches.  
A Galton-Watson tree describes the trajectory of the Galton-Watson
branching process \cite{AN_book} with a single progenitor and offspring distribution
$\{q_k\}$, $k=0,1,\dots$.
We write $\mathcal{GW}(q_k)$ for the probability measure that corresponds to this random tree.
A tree is called {\it critical} if the expected progeny of a single member equals unity: 
$\sum_{k=1}^\infty kq_k=1$. 
Similarly, a tree is {\it subcritical} if $\sum_{k=1}^\infty kq_k<1$. 
In this paper we analyze the invariance and attraction properties 
of critical and subcritical Galton-Watson trees under the operation of {\it combinatorial 
Horton pruning} -- cutting tree leaves and their parental edges followed by series reduction 
(removing vertices of degree $2$).
The Horton pruning (formally introduced in Sect.~\ref{sec:HortonPruning} and illustrated 
in Fig.~\ref{fig:HS}) is a discrete, combinatorial analog of the continuous 
{\it erasure} or {\it trimming} studied by Neveu \cite{Neveu86}, Neveu and Pitman \cite{NP,NP2},
Le Jan \cite{LeJan91}, Evans \cite{Evans2005}, 
and Evans, Pitman and Winter \cite{EPW06}.

\subsection{Invariance}
Combinatorial prune invariance of critical and subcritical Galton-Watson trees  was first examined by Burd~et~al.~\cite{BWW00}, under 
the assumption of a finite second moment for the offspring distribution, $\sum_{k=1}^\infty k^2q_k<\infty$.
These authors have shown that the only invariant measure in this class 
corresponds to the critical binary Galton-Watson tree, $q_0=q_2=\frac{1}{2}$ 
\cite[Thm.~ 3.9]{BWW00}. 
Here we substantially relax the regularity constraint on the offspring distribution;
see Assumption \ref{asm:reg} and Lem.~\ref{lem:RegCond} in Sect.~\ref{sec:reg}.
This reveals an abundance of prune-invariant measures with infinite second moment. 
Theorem \ref{thm:completeGW} describes all such measures among the critical 
and subcritical Galton-Watson trees that satisfy Assumption \ref{asm:reg}.
This infinite family of {\it Invariant Galton-Watson (IGW)} measures can be 
characterized by a single parameter -- the probability $q_0 \in [1/2,1)$ of having no offsprings. 
An individual distribution from this family is denoted by $\mathcal{IGW}(q_0)$; it
is a critical distribution with the offspring generating function 
%\be\label{eqn:Q}
$$Q(z)=\sum\limits_{k=0}^\infty q_kz^k = z+q_0(1-z)^{1/q_0}.$$
%\ee
The case $q_0=1/2$ with $Q(z)=(1+z^2)/2$ corresponds to the critical binary Galton-Watson tree $\mathcal{IGW}(1/2)=\mathcal{GW}(q_0\!=q_2\!=\!1/2)$.
Every invariant Galton-Watson measure  $\mathcal{IGW}(q_0)$ with $q_0\in(\frac{1}{2},1)$ corresponds to an unbounded offspring distribution 
of Zipf type with infinite second moment:
\[q_k \sim C k^{-(1+1/q_0)}\quad\text{as}\quad k\to\infty.\]

\subsection{Attraction}
Burd et al.~\cite[Thm.~ 3.11]{BWW00} have shown that any 
critical Galton-Watson tree with a bounded offspring number 
(there exists $b$ such that $q_k=0$ for all $k\ge b$) converges 
to the critical binary Galton-Watson tree under iterative Horton pruning, conditioned on surviving under the pruning.
Our Thm.~\ref{thm:IGWattractor} shows that the collection of $\mathcal{IGW}(q_0)$ 
measures for $q_0\in[1/2,1)$ and a point measure $\mathcal{GW}(q_0\!=\!1)$ are 
the only possible attractors of critical and subcritical Galton-Watson measures that
satisfy Assumption \ref{asm:reg}, with
respect to the iterative Horton pruning.
Specifically, all subcritical measures converge to $\mathcal{GW}(q_0\!=\!1)$, and
critical measures converge to $\mathcal{IGW}(q_0)$. 
The domain of attraction of $\mathcal{IGW}(q_0)$ for any $q_0\in[1/2,1)$ 
is characterized by the tail behavior of the offspring distribution $\{q_k\}$ of 
the initial Galton-Watson measure. 
In particular, Cor.~\ref{cor:Zipf} implies that every critical measure with
Zipf tail $q_k\sim Ck^{-(1+1/q)}$  for $q\in[1/2,1)$ and $C>0$ converges to $\mathcal{IGW}(q)$. 
The subcritical attractor $\mathcal{GW}(q_0\!=\!1)$ is the limiting
point of the IGW family for $q_0 = 1$ with generating function $Q(z)=z+(1-z) = 1$.
This distribution, however, is not prune-invariant.

Our results expand the attraction domain of the critical binary
Galton-Watson tree $\mathcal{IGW}(1/2)$ initially described by Burd et al.~\cite{BWW00}.
Specifically, Lem.~\ref{lem:2plusMoment} shows that any critical offspring 
distribution that has an infinite second moment, satisfies Assumption \ref{asm:reg},
and has a finite $2-\epsilon$ moment for all $\epsilon>0$
belongs to the attraction domain of $\mathcal{IGW}(1/2)$. 
We give an example of such a measure with $q_k\sim {4 \over 3}k^{-3}$. 

\begin{figure}[t] %[p] [t]
\centering\includegraphics[width=0.7\textwidth]{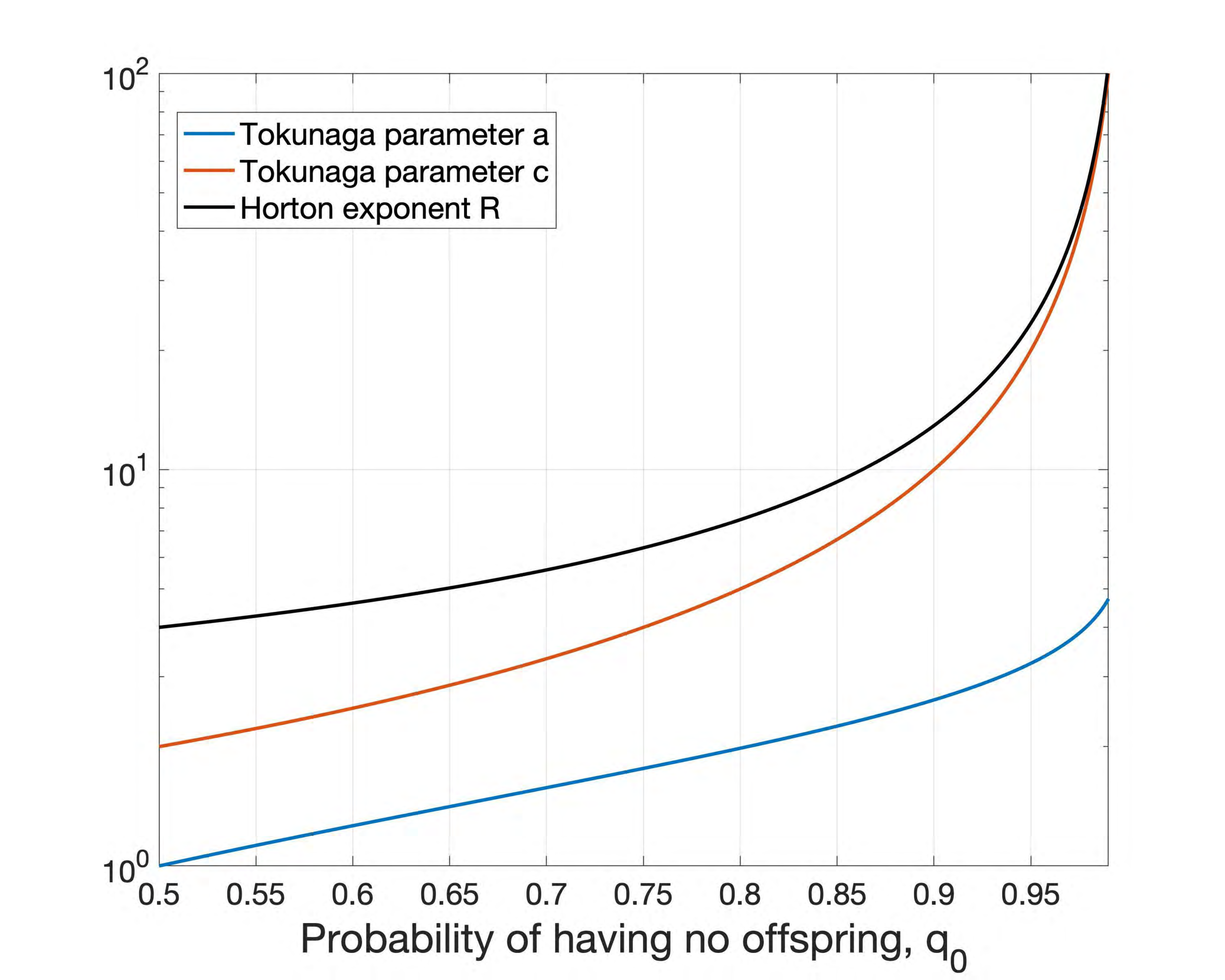}
\caption[Tokunaga parameters $a$, $c$ and Horton exponent $R$ in IGW]
{Tokunaga parameters $a$ (blue), $c$ (red) and Horton exponent $R$ (black) in 
invariant Galton-Watson trees $\mathcal{IGW}(q_0)$ for $q_0\in [0.5, 0.99]$.}
\label{fig:acR}
\end{figure}

\subsection{Toeplitz property}
The results of Burd et al.~\cite{BWW00} revealed an interesting characterization of the
critical binary Galton-Watson distribution in terms of its {\it Tokunaga sequence}.
Recall that the {\it Horton pruning} removes the leaf vertices
and their parental edges from a finite tree $T$, with subsequent
series reduction (removing degree-$2$ vertices). 
The {\it Horton order} of a tree $T$ is the minimal number of Horton prunings
sufficient to eliminate $T$.
Informally, a branch of Horton order $k$ is a contiguous part of a tree
(a collection of vertices and their parental edges in the initial tree) 
eliminated at $k$-th
iteration of Horton pruning (see Figs.~\ref{fig:HS},\ref{fig:terminology},\ref{fig:HSreg},\ref{fig:HSirreg} for examples, and Sect.~\ref{sec:HortonPruning} for a formal definition). 
Each leaf (i.e., a leaf vertex with its parental edge)  is a branch of order $1$.
Branches of higher orders may consist of lineages of vertices 
and their parental edges. 
The vertex farthest from the root is called the {\it terminal} vertex of a branch.
Applied literature often examines the statistics of mergers of branches of distinct orders within a tree.
Burd et al.~\cite{BWW00} formalize this by considering the 
{\it Tokunaga coefficients} $T_{i,j}[T]$,
for $i<j$, equal to the number of instances when a branch of order $i$
joins a non-terminal vertex of the leftmost branch of order $j$ closest to
the root within $T$, given that the tree order is greater than $j$.
This definition is suitable for describing a generic branch structure within
a Galton-Watson tree, given its symmetric iterative generation mechanism.
It has been shown \cite[Thm.~3.16]{BWW00} that the critical binary Galton-Watson
distribution $\mathcal{GW}(q_0=q_2=1/2)$ is characterized, among the bounded offspring distributions,
by the {\it Toeplitz property}:
\be\label{Toeplitz}
{\sf E}\big[T_{i,j}[T]\big] = T_{j-i}\quad\text{for a positive Tokunaga sequence } \{T_k\}_{k=1,2,\dots}.
\ee
Specifically, the critical binary Galton-Watson distribution corresponds to 
$T_k = 2^{k-1}$.
In Lem.~\ref{lem:IGWq0Tokunaga}, we show that all the invariant measures 
$\mathcal{IGW}(q_0)$ satisfy the Toeplitz property.  
In this analysis, we adopt an alternative, more general, definition
of the Tokunaga coefficient $T_{i,j}$, which (i) accounts for branching at the terminal
vertices, and (ii) can be applied to general (non Galton-Watson) trees. 
In our definition, the invariant measure $\mathcal{IGW}(q_0)$ corresponds to
the Tokunaga sequence (Lem.~\ref{lem:IGWq0Tokunaga})
\[T_1 = c^{c/(c-1)}-c-1,\quad T_k = ac^{k-1},\quad k=2,3,\dots\]
with (Fig.~\ref{fig:acR})
\[c = (1-q_0)^{-1}\quad\text{and}\quad a = (c-1)(c^{1/(c-1)}-1).\]  
The critical binary Galton-Watson case with $q_0=1/2$ corresponds to
$c=2$ and $a = 1$, which reconstructs the 
Burd et al.~\cite{BWW00} result $T_k = 2^{k-1}$.
Moreover, using the Tokunaga sequence definition from Burd et al.~\cite{BWW00},
we obtain a particularly simple Tokunaga sequence $T_k = c^{k-1}$ for $k=1,2,\dots$.

\subsection{Horton law}
A ubiquitous empirical observation in the analysis of dendritic structures
is the {\it Horton law} \cite{Horton45,KZsurvey,Pec95}. 
Informally, the law states that the numbers $N_k[T]$ of branches of order $k$ in a large
tree $T$ decays geometrically:
\[\frac{N_k[T]}{N_{k+1}[T]} \approx R\]
for some {\it Horton exponent} $R\ge 2$.
A formal definition of the Horton law for tree measures is given in Sect.~\ref{sec:Horton}.
 
It has been shown by McConnell and Gupta \cite{MG08} for a particular case of 
$T_k=ac^{k-1}$ with $a\ge0$, $c>0$, and generalized by the authors of this paper 
\cite{KZ16} to an arbitrary Tokunaga sequence $\{T_k\}$, that the 
Toeplitz property implies the Horton law. 
Lemma~\ref{lem:IGWq0Tokunaga} shows that the invariant Galton-Watson measure 
$\mathcal{IGW}(q_0)$ for any $q_0\in[1/2,1)$ obeys the Horton law 
with the Horton exponent $\,R=(1-q_0)^{-1/q_0}$ (Fig.~\ref{fig:acR}).

%%%%%%%%%%%%%%%%%%%%%%%%%%%%%%%%%%%%%%
\begin{figure}[t] %[p] [t]
\centering\includegraphics[width=0.7\textwidth]{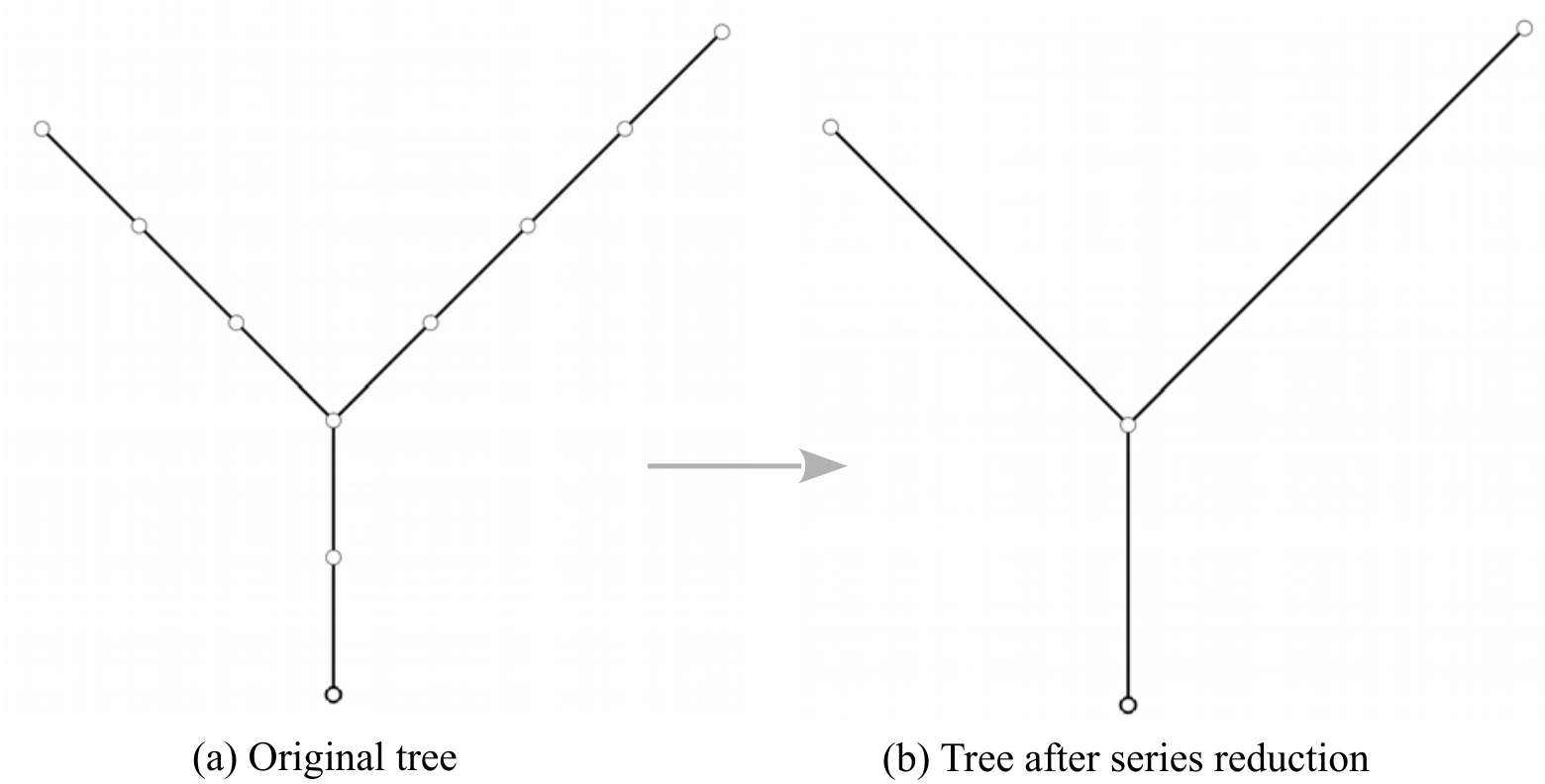}
\caption[Series reduction: Example]
{Series reduction: Example. Tree $T$ before (a) and after (b) series reduction.}
\label{fig:reduction}
\end{figure}

\section{Preliminaries}
\subsection{Galton-Watson tree measures}
Consider the space $\cT$ of finite unlabeled rooted reduced trees.
A tree is called {\it rooted} if one of its vertices, denoted by $\rho$, 
is selected as the tree root.
The existence of root imposes a parent-offspring relation between each pair of adjacent vertices: 
the one closest to the root is called the {\it parent}, 
and the other the {\it offspring}\index{vertex!parental}\index{vertex!offspring}. 
The space $\cT$ includes the {\it empty tree}\index{tree!empty} 
$\phi$ comprised of a root vertex and no edges.
The tree root is the only vertex that does not have a parent.
Let $\cT^|$ denote a subspace of {\it planted trees} in $\cT$; it contains 
$\phi$ and all the trees in $\cT$ with the root vertex having exactly one offspring 
(see Figs.~\ref{fig:reduction},\ref{fig:HS}).
The degree of the root equals the number of its offsprings. 
The degree of a non-root vertex is the number of its offsprings plus one 
(to account for the parent).
The number of the offsprings at a vertex is called the vertex {\it branching number}.
A tree from $\cT^|$ is called reduced is it has no vertices of degree $2$. 

For a given offspring distribution $\{q_k\}_{k=0,1,2,\hdots}$, we let $\mathcal{GW}(\{q_k\})$ denote the corresponding Galton-Watson tree measure. 
We assume that each tree begins with a single root vertex which produces a single offspring, so the resulting trees are in $\cT^|$. 
In this renowned Markov chain construction, each non-root vertex produces $k$ offsprings with probability $q_k$, independently of other vertices.
We assume $\sum\limits_{k=0}^\infty kq_k \leq 1$ and $q_1=0$ as we need $\mathcal{GW}(\{q_k\})$ to be a probability measure on $\cT^|$
(the trees in $\cT^|$ are required to be finite and reduced).
The assumption of subcriticality or criticality implies $q_0 \geq {1 \over 2}$,
since
\[1 \geq  \sum\limits_{k=2}^\infty kq_k \geq 2\sum_{k=2}^\infty q_k ~=2(1-q_0).\]

\subsection{Horton pruning, orders}\label{sec:HortonPruning}
Recall that {\it series reduction} on a tree $T$ removes each vertex of degree $2$ 
and merges its two adjacent edges into one (Fig.~\ref{fig:reduction}). 
\begin{Def}[{{\bf Horton pruning}}]
\label{def:Horton_prune}
Horton pruning $\cR:\cT \rightarrow \cT$ is an onto 
function whose value $\cR(T)$ for a tree $T\ne\phi$ is obtained by removing
the leaves and their parental edges from $T$, followed by series reduction.
We also set $\cR(\phi)=\phi$.
\end{Def}
\noindent
The trajectory of each tree $T$ under $\cR(\cdot)$ is uniquely determined and finite:
\be\label{TRR0}
T\equiv\cR^0(T)\to \cR^1(T) \to\dots\to\cR^k(T)=\phi,
\ee
with the empty tree $\phi$ as the (only) fixed point.
The pre-image $\cR^{-1}(T)$ of any non-empty tree $T$ consists of an infinite 
collection of trees.

%%%%%%%%%%%%%%%%%%%%%%%%%%%%%%%%%%%%%%
\begin{figure}[t] %[p] [t]
\centering\includegraphics[width=0.9\textwidth]{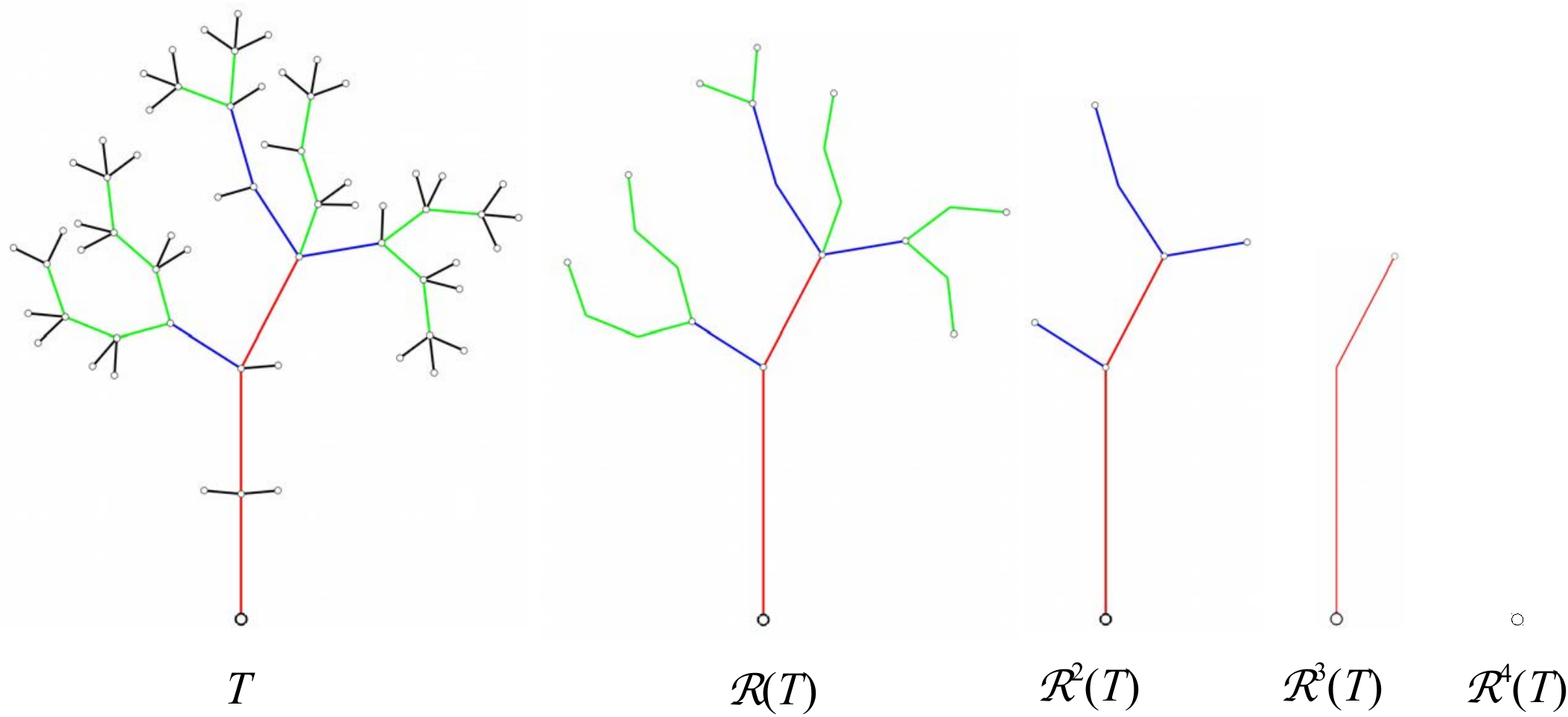}
\caption[Horton-Strahler order: Example]
{The Horton-Strahler orders: Example. Consecutive prunings $\cR^k(T)$, $k=0,1,\dots,4$, of tree $T$.
The order of tree is ${\sf ord}(T) = 4$ since $\cR^4(T)=\phi$.
Different colors depict branches of different orders: ${\sf ord}=1$ (black), ${\sf ord}=2$ (green), ${\sf ord}=3$ (blue), and ${\sf ord}=4$ (red).}
\label{fig:HS}
\end{figure}

\medskip
\noindent
It is natural to think of the distance to $\phi$ under the Horton pruning map
and introduce the respective notion of tree order \cite{Horton45,KZsurvey,Pec95,Strahler}.

\begin{Def}[{{\bf Horton-Strahler order}}]\label{def:HSorder}
The Horton-Strahler order\index{Horton-Strahler order} 
${\sf ord}(T)\in\mathbb{Z}_+$ of a tree $T\in\cT^|$  is defined as
the minimal number of Horton prunings necessary to eliminate the tree:
\[{\sf ord}(T)=\min\left\{k\ge 0 ~:~\cR^k(T)=\phi\right\}.\]
\end{Def}

\noindent 
In particular, the order of the empty tree is ${\sf ord}(\phi)=0$,
because $\cR^0(\phi)=\phi$.
This definition is illustrated in Fig.~\ref{fig:HS} for a tree $T$ with ${\sf ord}(T)=4$. 
In this paper we consider probability measures $\mathcal{GW}(\{q_k\})$ on $\cT^|$ 
that satisfy $\sum\limits_{k=0}^\infty kq_k \leq 1$ and $q_1=0$ and assign 
probability zero to the empty tree $\phi$.

\begin{figure}[t] %[p] [t]
\centering\includegraphics[width=0.9\textwidth]{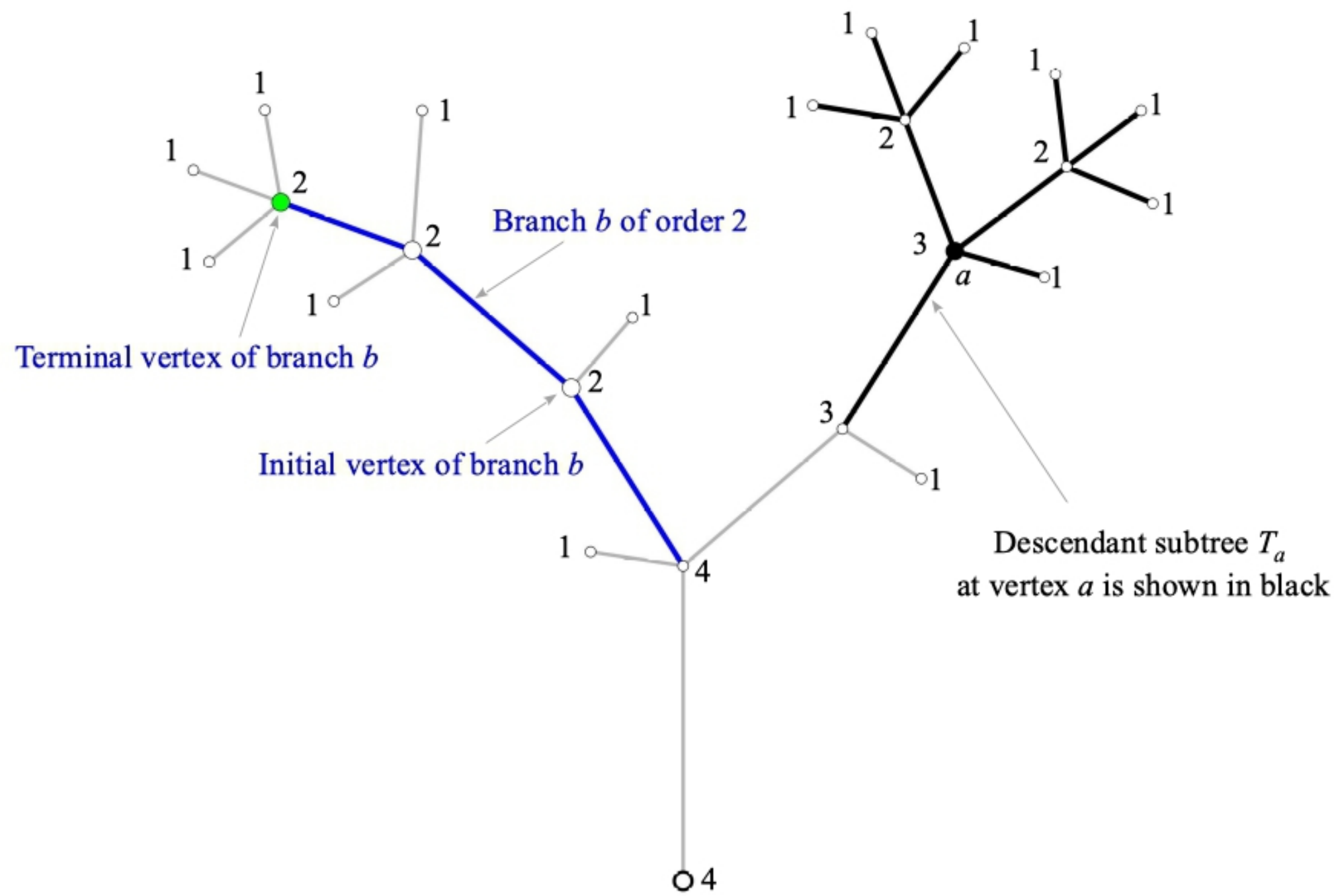}
\caption[Horton-Strahler terminology]
{Illustration of the Horton-Strahler terminology (Def.~\ref{def:HS}). 
A branch $b$ of oder $2$ is shown in blue in the left part of the figure. 
The branch consists of three vertices of order $2$ and their parental edges. 
The terminal vertex of branch $b$ is shown by green circle.
The descendant subtree $T_a$ at vertex $a$ is shown in black in the right part of the figure. 
The Horton-Strahler orders are shown next to the vertices.}
\label{fig:terminology}
\end{figure}

\medskip
\begin{Def}[{{\bf Horton-Strahler terminology}}]
\label{def:HS}
We introduce the following definitions related to the Horton-Strahler order
of a tree (see Fig.~\ref{fig:terminology}):
\begin{enumerate}
\item
{\bf (Descendant subtree at a vertex)} For any non-root vertex $v$ in $T \ne \phi$, 
a {\it descendant subtree} $T_v\subset T$ 
is the only planted subtree in $T$ rooted at the parental vertex ${\sf parent}(v)$ of $v$, and comprised by $v$ and all 
its descendant vertices together
with their parental edges.
Figure~\ref{fig:terminology} shows in black color the descendant subtree $T_a$ at vertex $a$.
\item
{\bf (Vertex order)} For any vertex $v\in T\setminus\{\rho\}$
we set ${\sf ord}(v) = {\sf ord}(T_v)$.
We also set ${\sf ord}(\rho)={\sf ord}(T)$.

\item 
{\bf (Edge order)} The parental edge of a non-root vertex has the same order as the vertex.

\item
{\bf (Branch)} A maximal connected component consisting of vertices and edges of the same order 
 is called a {\it branch}\index{branch}. Figure~\ref{fig:terminology} shows a branch $b$ of order $2$ (blue) that consists of three vertices and their parental edges.
Note that a tree $T$ always has a single branch of the maximal order ${\sf ord}(T)$.
In a stemless tree, the maximal order branch may consist of a single root vertex. 

\item
{\bf (Initial and terminal vertex of a branch)} 
The branch vertex closest to the root is called the {\it initial vertex} 
of the branch\index{branch!initial vertex}.
The branch vertex farthest from the root is called 
the {\it terminal vertex} of a branch\index{branch!terminal vertex}.
Figure~\ref{fig:terminology} shows the terminal vertex of branch $b$ (blue)
as a green circle. 
\end{enumerate} 
\end{Def}

\begin{figure}[t!] %[p] [t]
\centering\includegraphics[width=0.9\textwidth]{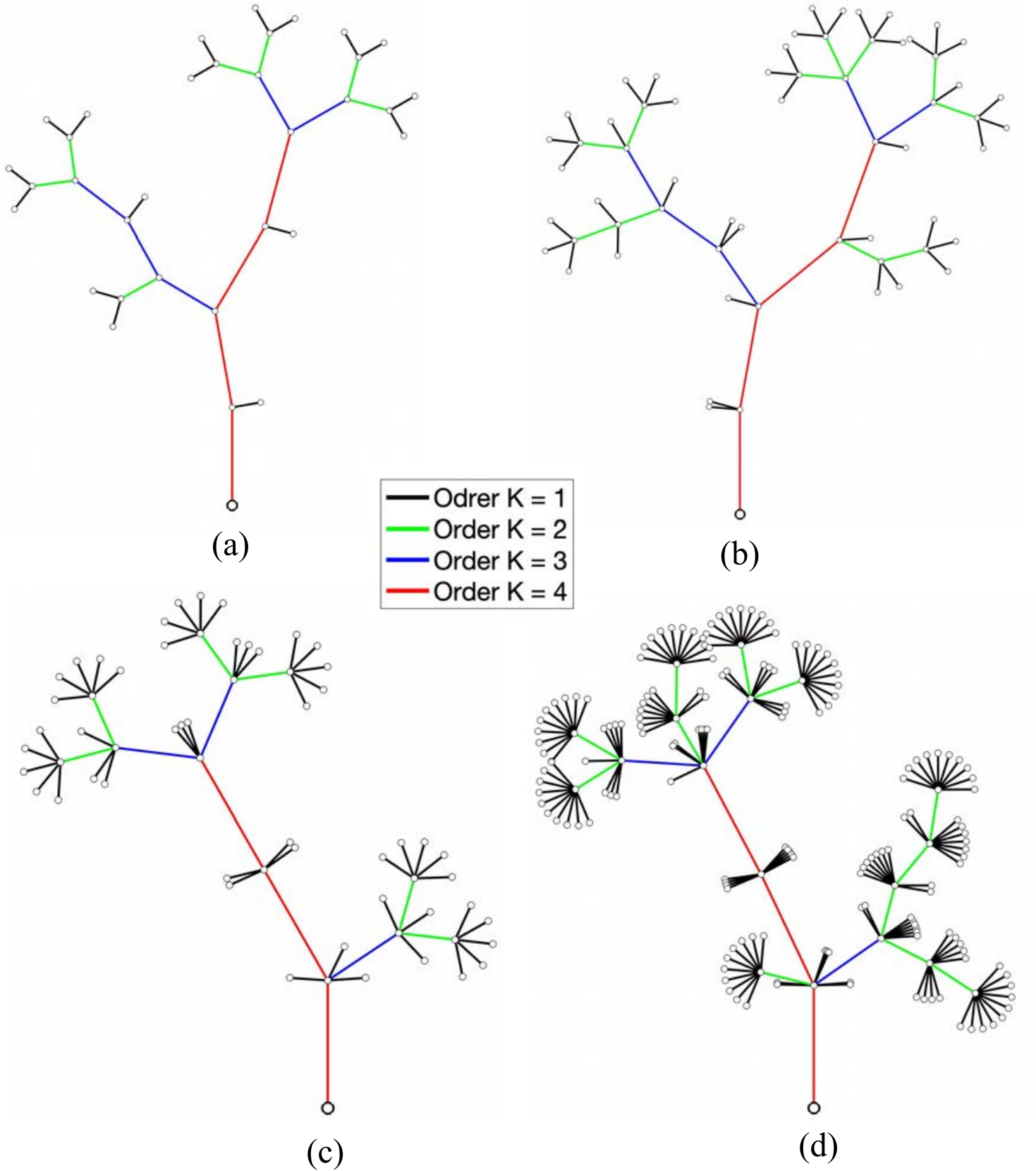}
\caption[Horton-Strahler order: Examples]
{Examples of Horton-Strahler ordering in trees with constant branching 
number $b$ ($q_0+q_b=1$).
(a) $b=2$, (b) $b=3$, (c) $b=5$, (d) $b=10$.
Each panel shows a tree of order ${\sf ord}=4$. 
Edges of different orders are shown in different colors, as
indicated in the legend.}
\label{fig:HSreg}
\end{figure}

\medskip
\noindent 
The Horton-Strahler orders can be equivalently defined via hierarchical counting \cite{Horton45,Strahler,DK94,Pec95,NTG97}.
The first such definition beyond the binary case appeared in \cite{BWW00}.
In this approach, each leaf is assigned order $1$.
If an internal vertex $p$ has $m\ge 1$ offspring with orders $i_1,i_2,\hdots,i_m$ 
and $r=\max\left\{i_1,i_2,\hdots,i_m\right\}$, then 
\be\label{eq:HSorder_gen}
{\sf ord}(p)=\begin{cases}
    r & \text{ if }~ \#\left\{s:~i_s=r \right\}=1,\\
    r +1 & \text{ otherwise}.
\end{cases}
\ee
\noindent 
The parental edge of a non-root vertex has the same order as the vertex.
The Horton-Strahler order of a tree $T\ne\phi$ is 
${\sf ord}(T)=\max\limits_{v\in T} {\sf ord}(v)$, 
where the maximum is taken over all vertices in $T$.
This definition is most convenient for practical calculations, which
explains its popularity in the literature.

Figures~\ref{fig:HSreg},\ref{fig:HSirreg} illustrate Horton-Strahler orders
in trees with a constant branching number $b$ ($q_0+q_b=1$) and 
with a bounded offspring distribution ($q_k=0$ for $k>b$), repsectively.

\begin{figure}[t!] %[p] [t]
\centering\includegraphics[width=0.9\textwidth]{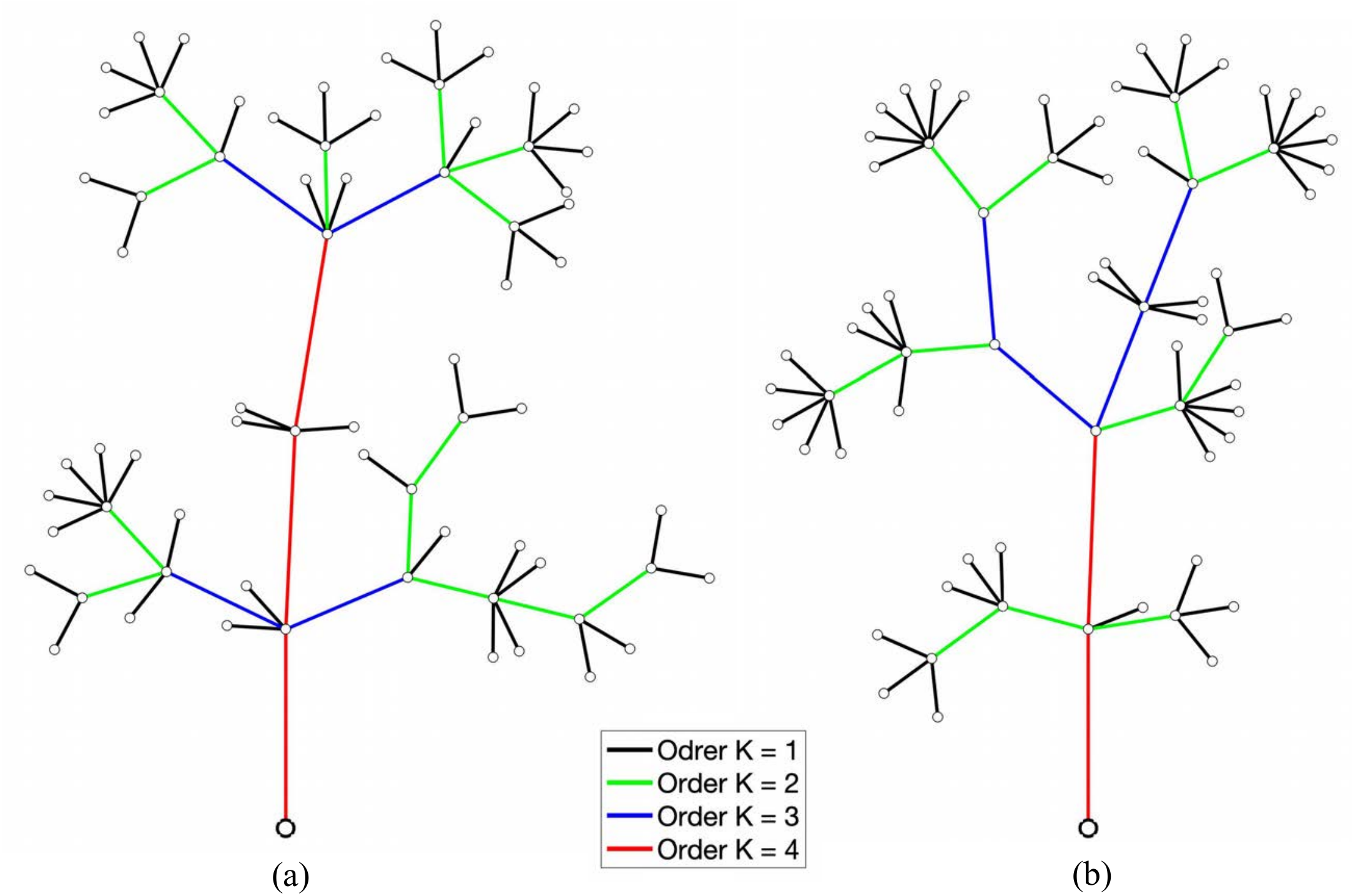}
\caption[Horton-Strahler order: Examples]
{Examples of Horton-Strahler ordering in trees with bounded offspring 
distribution: $q_k=0$ for $k>b$.
(a) $b=5$, (b) $b=6$.
Each panel shows a tree of order ${\sf ord}=4$. 
Edges of different orders are shown in different colors, as
indicated in the legend.}
\label{fig:HSirreg}
\end{figure}

\subsection{Horton self-similarity}

Here we define self-similarity of a Galton-Watson measure
with respect to the Horton pruning $\cR$, which is the main operation on trees discussed
in this work.

\begin{Def}[{{\bf Horton self-similarity}}]\label{def:prune}
Consider a Galton-Watson measure $\mu$ on $\cT$ (or $\cT^|$) such that $\mu(\phi) = 0$.
Let $\nu$ be the pushforward measure, $\nu=\cR_*(\mu)$, i.e.,
$$\nu(T)=\mu \circ \cR^{-1}(T) = \mu \big(\cR^{-1}(T)\big).$$
Measure $\mu$ is called invariant with
respect to the Horton pruning (Horton prune-invariant)\index{Horton prune-invariance},
or Horton self-similar, if for any tree $T\in\cT$ (or $\cT^|$) we have 
\be
\label{def:pi}
\nu\left(T\,|T\ne\phi\right)=\mu(T).
\ee
\end{Def}

Definition~\ref{def:prune} does not distinguish between prune-invariance
and self-similarity. 
Such equivalence is a particular
property of Galton-Watson measures connected to their Markov structure. 
In a general case, prune-invariance happens to be a weak property that allows 
a multitude of obscure measures.
A general prune-invariant measure on $\cT$ has to satisfy an additional 
property, called {\it coordination}, to be called self-similar.
The Galton-Watson measures always satisfy the coordination property; see \eqref{eq:mean_coord}.
We refer to \cite{KZsurvey} for a comprehensive 
discussion and examples.

\subsection{Tokunaga coefficients and Toeplitz property}
\label{sec:Tok}
This section introduces {\it Tokunaga coefficients} that describe
mergers of branches of different orders in a random tree. 
Empirically, a Tokunaga coefficient $T_{i,j}$ is the average number
of branches of order $i$ that merge a branch of order $j$ within 
a tree $T$.
The Markovian generation process ensures that all branches of a given order $j$ in
a Galton-Watson tree have the same probabilistic structure. 
Hence, one can follow Burd et al.~\cite{BWW00} and define $T_{i,j}$ as
the mean number of order $i$ branches within a particular branch of order $j$,
for instance -- the leftmost branch closest to the root. 
We introduce below a more general definition, which is equivalent to
that of Burd et al.~\cite{BWW00} for Galton-Watson trees, and can extend
to non Markovian branching processes.
This set up will also be needed to formulate the Horton law in Sect.~\ref{sec:Horton}.

Consider a measure $\mu$ on $\cT$ (or $\cT^|$) such that $\mu(\phi)=0$.
The Horton pruning partitions the underlying tree space into exhaustive and mutually exclusive collection 
of subspaces $\cH_k$ of trees of Horton-Strahler order $k\ge 0$ such that $\cR(\cH_{k+1})=\cH_k$.
Here $\cH_0=\{\phi\}$, $\cH_1$ consists of a single tree comprised of a root and a leaf
connected to the root by its parental edge, and all other subspaces $\cH_k$, $k\ge 2$, consist of 
an infinite number of trees.
Naturally, $\cH_k\bigcap\cH_{k'}=\emptyset$ if $k\ne k'$, and 
$\bigcup\limits_{k\ge 1} \cH_k =\cT$ (or $\cT^|$).
Consider a set of conditional probability measures $\{\mu_k\}_{k\ge 0}$
each of which is defined on $\cH_k$.
Specifically, we set $\mu_k(\cdot)\equiv0$ for any $k$ such that $\mu(\cH_k)=0$
and  
\be\label{eq:muk}
\mu_k(T) = \mu(T\,|T\in\cH_k)
\ee
otherwise. 
Letting $\pi_k=\mu(\cH_k)$, the measure $\mu$ can be represented as a mixture of the conditional measures:
\be
\label{muk}
\mu=\sum_{k=1}^{\infty}\pi_k\mu_k.
\ee

Let $N_k=N_k[T]$ be the number of branches of order $k$ in a tree $T$. 
For given integers $1 \leq i <j$, let $n_{i,j}=n_{i,j}[T]$ denote the total number 
of vertices of order $i$ that have parent of order $j$ 
in a tree $T \in \cT$ (or $\cT^|$).
We write ${\sf E}_K[\cdot]$ for the expectation with respect to $\mu_{K}$ of Eq.~\eqref{eq:muk}.

\medskip
\noindent
We define the {\it average Horton numbers}\index{Horton numbers (branch counts)} 
for subspace $\cH_{K}$ as
\[\cN_{k}[K]= {\sf E}_K[N_k],\quad 1\le k\le K,\quad K\ge 1.\]
For subspace $\cH_{K}$, let
\be
\label{eqn:tijGeneral}
t_{i,j}[K]=\frac{{\sf E}_K[n_{i,j}]}{{\sf E}_K[N_j]}=\frac{{\sf E}_K[n_{i,j}]}{\cN_{j}[K]}, \quad 1\le i <j\le K,
\ee
be the {\it total Tokunaga merger statistics} that is used
to define the {\it Tokunaga coefficients} 
\be\label{eqn:GeneralTij}
T_{i,j}[K]=t_{i,j}[K]-2\delta_{i,j-1} \qquad (1 \leq i <j).
\ee 
\begin{rem}\label{rem:side-branching}
Recall that a branch of order $j$ is formed by a merger of two or more branches of 
order $j-1$.
We designate two arbitrarily selected branches of order $j-1$ that descend 
from the terminal vertex of a branch of order $j$ as {\it principle branches}. 
The existence of such two branches follows from the definition of 
the Horton order (Def.~\ref{def:HSorder}). 
The other branches (if any) of order $i \leq j-1$ that descend from any 
vertex, including the terminal vertex, in a branch of order $j$ are
said to be {\it side branches} of {\it Tokunaga index} $\{i,j\}$.
The Tokunaga coefficients $T_{i,j}[K]$ are intended to count the number of side branches of Tokunaga index $\{i,j\}$, which explains the need to subtract $2\delta_{i,j-1}$ in \eqref{eqn:GeneralTij}.
\end{rem}

Finally, let $n_{i,j}^o$ denote the total number of vertices of order $i$ whose 
parent vertices are non-terminal vertices of order $j$. Then,
\be\label{eqn:GeneralTijReg}
T_{i,j}^o[K]={{\sf E}_K[n_{i,j}^o] \over {\sf E}_K[N_j]}={{\sf E}_K[n_{i,j}^o] \over \cN_{j}[K]} \qquad (1 \leq i <j)
\ee
are called the {\it regular Tokunaga coefficients}.

\begin{rem}
The quantities $\cN_k[K]$, $t_{i,j}[K]$, $T_{i,j}[K]$, and $T_{i,j}^o[K]$ depend on the measure $\mu$.
We skip this dependence in our notations.
\end{rem}

\noindent
We observe that for a subcritical or critical Galton-Watson measure $\mu$ we have 
the following {\it coordination} property \cite{KZsurvey}:
\be\label{eq:mean_coord}
T_{i,j}:=T_{i,j}[K] \quad \text{ for all } K\ge 2 \text{ and } 1\le i< j\le K.
\ee
This is explained as follows. 
Consider all nodes in generation $d \in \mathbb{N}$ (which may be an empty set) in
a critical or subcritical Galton-Watson tree $T$. 
The descendant subtrees $T_v$ (see Def.~\ref{def:HS}) for $v$ in  generation $d$ are independently distributed according to $\mu$.
Sampling of $T_v$ can be split into two steps, first selecting its order with probability distribution $\pi_j$, next sampling
the tree of order $j$ according to the probability measure $\mu_j$. 
The branching history $\mathcal{F}_d$ up to generation $d$ together with 
the orders of the descendant subtrees $T_v$ with $v$ in generation $d$ completely 
determines (i) the order of the tree $T$, and 
(ii) whether or not $v$ is the initial vertex (Def.~\ref{def:HS}) of the corresponding branch of order ${\sf ord}(T_v)$.
At the same time, conditioned on $\mathcal{F}_d$ and the orders ${\sf ord}(T_v)$ for $v$ in generation $d$, each $T_v$ is independently distributed
according to $\mu_j$, where ${\sf ord}(T_v)=j$.

\medskip
\noindent
The respective Tokunaga matrix $\mathbb{T}_K$ is a $K \times K$ matrix
\[\mathbb{T}_K=\left[\begin{array}{ccccc}
0 & T_{1,2} & T_{1,3} & \hdots & T_{1,K} \\
0 & 0 & T_{2,3} & \hdots & T_{2,K} \\
0 & 0 & \ddots & \ddots & \vdots \\
\vdots & \vdots & \ddots & 0 & T_{K-1,K} \\
0 & 0 & \dots & 0 & 0\end{array}\right],\]
which coincides with the restriction of any larger-order Tokunaga matrix $\mathbb{T}_M$, $M>K$,
to the first $K\times K$ entries.

\begin{Def}[{{\bf Toeplitz property}}]
\label{Tsi}
A Galton-Watson measure $\mu$ is said to satisfy 
the {\it Toeplitz property} if 
there exists a sequence $T_k\ge 0$, $k=1,2,\dots$
such that
\be\label{eq:Toeplitz}
T_{i,j} = T_{j-i}.
\ee
The elements of the sequences $T_k$ are also referred to as Tokunaga coefficients, 
which does not create confusion with $T_{i,j}$.
\end{Def}

For a Galton-Watson measure that satisfies the Toeplitz property,
the corresponding Tokunaga matrices $\mathbb{T}_K$ are Toeplitz\footnote{Note that in \cite{BWW00}, the Tokunaga sequence was set to satisfy $T_{i,j}^o=T_{i,j}^o[K]=T_{j-i}$. That is, the offsprings adjacent to the terminal vertex of order $j$ branch were not counted.}:
$$\mathbb{T}_K=\left[\begin{array}{ccccc}
0 & T_1 & T_2 & \hdots & T_{K-1} \\
0 & 0 & T_1 & \hdots & T_{K-2} \\
0 & 0 & \ddots & \ddots & \vdots \\
\vdots & \vdots & \ddots & 0 & T_1 \\
0 & 0 & \dots & 0 & 0\end{array}\right].$$

The following statement has been proven in \cite{KZsurvey} for 
(not necessarily Galton-Watson) binary trees;
the argument applies verbatim to general Galton-Watson trees. 
\begin{prop}[{{\bf Prune-invariance implies Toeplitz}}]
\label{PItoT}
Suppose a Galton-Watson measure $\mu$ is Horton prune-invariant, then
it satisfies the Toeplitz property (Def.~\ref{Tsi}).
\end{prop}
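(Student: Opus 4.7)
The plan is to exploit Horton prune invariance together with a structural bijection between the branches of a tree $T'$ and those of its pruning $T=\cR(T')$ to show that $T_{i,j}=T_{i+1,j+1}$ for every $1\le i<j$, from which the Toeplitz property follows by iteration.

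First, I would translate the prune-invariance identity \eqref{def:pi} into the language of the conditional measures $\mu_k$ from \eqref{eq:muk}. Using the decomposition $\mu=\sum_k\pi_k\mu_k$ and the fact that $\cR$ maps $\cH_{k+1}$ to $\cH_k$, the identity $\nu(T\,|\,T\ne\phi)=\mu(T)$ unwinds, component by component on each $\cH_k$, into
\[\cR_*\mu_{k+1}=\mu_k\qquad(k\ge 1),\]
so that for every bounded functional $F$ on trees of order $k$ one has ${\sf E}_k[F]={\sf E}_{k+1}[F\circ\cR]$.

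The heart of the argument is the combinatorial identity, valid for every $T'\in\cT^|$ with ${\sf ord}(T')\ge j+1$,
\[N_j\bigl[\cR(T')\bigr]=N_{j+1}[T']\qquad\text{and}\qquad n_{i,j}\bigl[\cR(T')\bigr]=n_{i+1,j+1}[T']\qquad(1\le i<j).\]
I would prove this by tracking the fate of each order-$(j+1)$ branch $b'$ of $T'$ under $\cR$: after leaves are clipped and freed degree-$2$ vertices are absorbed by series reduction, $b'$ persists as an order-$j$ branch $b$ of $\cR(T')$, with terminal vertex inherited from that of $b'$. Its order-$(i+1)$ side branches with $i\ge 1$ survive as order-$i$ side branches of $b$; its order-$1$ side branches disappear; and the two principal branches of order $j$ at the terminal vertex of $b'$ become the two principal branches of order $j-1$ at the terminal vertex of $b$. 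Because $\delta_{i,j-1}=\delta_{i+1,j}$, the principal contributions match on both sides of the count identity.

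Combining the two steps, the coordination relation \eqref{eq:mean_coord} lets me drop the subspace index $[K]$ and write
\[t_{i,j}=\frac{{\sf E}_K[n_{i,j}]}{{\sf E}_K[N_j]}=\frac{{\sf E}_{K+1}[n_{i+1,j+1}]}{{\sf E}_{K+1}[N_{j+1}]}=t_{i+1,j+1},\]
whence $T_{i,j}=T_{i+1,j+1}$ for every $1\le i<j$. Iterating, $T_{i,j}=T_{1,j-i+1}$, so setting $T_k:=T_{1,k+1}$ gives \eqref{eq:Toeplitz}. The main obstacle will be the combinatorial identity in the preceding paragraph — specifically, verifying that the parent of the initial vertex of a surviving side branch in $\cR(T')$ really is the parent inherited from $T'$, rather than a more distant ancestor reached through a series-reduction shortcut. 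This reduces to the observation that any vertex $p'$ of order at least $2$ with a child $v$ also of order at least $2$ keeps at least two surviving children after pruning (a non-leaf neighbor or continuation in its own branch, together with $v$ itself), so $p'$ is never absorbed by series reduction.
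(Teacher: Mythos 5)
Your proposal is correct and takes essentially the same route as the paper, which gives no inline proof but defers to \cite{KZsurvey}: prune-invariance yields $\cR_*\mu_{k+1}=\mu_k$, the order-shift bijection of branches under pruning gives $N_j\circ\cR=N_{j+1}$ and $n_{i,j}\circ\cR=n_{i+1,j+1}$, and the coordination property \eqref{eq:mean_coord} then turns $t_{i,j}[K]=t_{i+1,j+1}[K+1]$ into $T_{i,j}=T_{i+1,j+1}$ and hence the Toeplitz property. Your closing verification that the parent vertex of an attached side branch always retains at least two surviving children (so it is never absorbed by series reduction) is precisely the point that makes the branch bijection respect attachments, and it is sound.
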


\begin{Def}[{{\bf Tokunaga self-similarity}}]
\label{ss2}
A Galton-Watson measure $\mu$ on $\cT$ is called 
{\it Tokunaga self-similar} with parameters $(a,c)$ if it satisfies the
Toeplitz property and its
Tokunaga sequence $\{T_j\}_{j=1,2,\hdots}$ is expressed as
\be \label{eq:tok}
T_j = a\,c^{j-1},\quad j\ge 1
\ee
for some constants $a\ge0$ and $c>0$.
\end{Def}

\subsection{Horton law}
\label{sec:Horton}
Consider a measure $\mu$ on $\cT$ (or $\cT^|$) and its conditional measures $\mu_K$, each defined
on subspace $\cH_K\subset\cT$ of trees of Horton-Strahler 
order $K\ge 1$ as discussed in Sect.~\ref{sec:Tok}. 
We write $T\stackrel{d}{\sim}\mu_K$ for a random tree $T$ drawn from a subspace 
$\cH_K$ ($\mu(\cH_K)>0$) according to measure~$\mu_K$.
\begin{Def}[{{\bf Strong Horton law for mean branch numbers}}] 
\label{def:Horton_mean}
We say that a probability measure $\mu$ on $\cT$ (or $\cT^|$) satisfies a strong Horton law for mean branch numbers
if there exists such a positive (constant) Horton exponent $R\ge2$ that for any $k\ge 1$, 
\be
\label{eq:Horton_mean}
%\lim_{K\to\infty}\left(\frac{{\sf E}\left[N_k[T]\right]}{{\sf E}\left[N_1[T]\right]};\, T\stackrel{d}{\sim}\mu_K\right)=
\lim_{K\to\infty}\frac{\cN_k[K]}{\cN_1[K]}
= R^{1-k}.
\ee
\end{Def}
\noindent Here, the adjective {\it strong} refers to the type of geometric convergence; see \cite{KZsurvey}
for details.

The work \cite{KZ16} establishes the strong Horton law in a binary tree
that satisfies the Toeplitz property (Def.~\ref{Tsi}).
We observe that the results of \cite{KZ16} hold beyond the binary case,
as the derivation steps are identical.
Specifically, assume the Toeplitz property with a Tokunaga sequence $\{T_k\}$ 
and consider a sequence $t(k)$ defined by 
\[t(0)=-1, ~\text{ and } ~~t(k)=T_k+2\delta_{1,k} ~\text{ for } k \geq 1.\] 
Observe that $t_{i,j}=t_{i,j}[K]=t(j-i)$. The generating function of $t(k)$ is
$$\hat{t}(z)=\sum\limits_{k=0}^\infty z^k t(k)=-1+2z+\sum\limits_{k=1}^\infty z^k T_k.$$

\begin{thm}[{{\bf Strong Horton law in a mean self-similar tree, \cite{KZ16}}}]
\label{thm:HLSST}
Suppose $\mu$ is a Galton-Watson measure on $\cT^|$ that satisfies the Toeplitz
property with Tokunaga sequence $\{T_j\}_{j=1,2,\hdots}$ such that
\be
\label{eq:tamed}
\limsup_{j\to\infty} T_j^{1/j}<\infty.
\ee
Then the strong Horton law for mean branch numbers (Def.~\ref{def:Horton_mean}) holds
with the Horton exponent $R=1/w_0$, where $w_0$ is the only real zero of
the generating function $\hat{t}(z)$ in the interval $\left(0,{1 \over 2}\right]$. 
Moreover,
\be\label{eq:Nk}
\lim_{K\to\infty}\left(\cN_1[K]\,R^{-K}\right) = const. > 0.
\ee
Conversely, if 
$\limsup\limits_{j \rightarrow \infty} T_j^{1/j} =\infty$, 
then the limit 
$\lim\limits_{K\to\infty}\frac{\cN_{k}[K]}{\cN_1[K]}$ 
does not exist at least for some $k$.
\end{thm}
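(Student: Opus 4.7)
My plan is to reduce the expected branch counts to a $K$-independent convolution recursion, write down its generating function, and extract the Horton asymptotics from the dominant singularity.

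First I would derive a recursion for branch counts. Inside a tree drawn from $\mu_K$, an order-$j$ branch with $j>k$ contributes on average $t(j-k)$ to the count of order-$k$ branches (two principal sub-branches when $j=k+1$, and $T_{j-k}$ side-branches in general). By the coordination property \eqref{eq:mean_coord}, this expectation is the same for every order-$j$ branch regardless of where it sits in the tree, so taking expectations against $\mu_K$ yields
\begin{equation*}
\cN_k[K] \;=\; \sum_{j=k+1}^{K} t(j-k)\,\cN_j[K], \qquad 1\le k<K,\qquad \cN_K[K]=1.
\end{equation*}
A straightforward induction on $m$ then shows that $a_m:=\cN_{K-m}[K]$ does not depend on $K$ (as long as $K\ge m+1$), with $a_0=1$ and $a_m=\sum_{l=1}^{m}t(l)\,a_{m-l}$. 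Multiplying by $z^m$ and summing, the generating function $A(z):=\sum_{m\ge 0}a_m z^m$ satisfies $A(z)-1=A(z)\bigl(\hat{t}(z)+1\bigr)$, and hence
\begin{equation*}
A(z) \;=\; -\frac{1}{\hat{t}(z)}.
\end{equation*}

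Next I would handle the forward direction. Under $\limsup T_j^{1/j}<\infty$, the series $\hat{t}$ is analytic on a disk of positive radius, and every Taylor coefficient of $\hat{t}$ from $z^1$ onward is nonnegative. Hence $\hat{t}$ is strictly increasing on the positive real axis; combined with $\hat{t}(0)=-1<0$ and $\hat{t}(1/2)=\sum_{k\ge 1}T_k\,2^{-k}\ge 0$, this yields a unique simple real zero $w_0\in(0,1/2]$. For any other point $w_0 e^{i\theta}$ with $\theta\ne 0$ on the same circle, the identity $\sum_{k\ge 1}t(k)w_0^k e^{ik\theta}=1$ would force $k\theta\equiv 0\pmod{2\pi}$ throughout the support of $\{t(k)\}_{k\ge 1}$; since $t(1)=T_1+2\ge 2>0$, this forces $\theta=0$, so $w_0$ is the unique singularity of $A$ on its circle of convergence and it is a simple pole. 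Extracting the polar part gives
\begin{equation*}
a_m \;\sim\; \frac{1}{\hat{t}'(w_0)\,w_0}\,R^{m},\qquad R:=1/w_0,
\end{equation*}
with strictly positive leading constant because $\hat{t}'(w_0)>0$. Writing $\cN_k[K]/\cN_1[K]=a_{K-k}/a_{K-1}$ and $\cN_1[K]R^{-K}=a_{K-1}R^{-K}$ and passing to $K\to\infty$ then yields both \eqref{eq:Horton_mean} and \eqref{eq:Nk}.

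For the converse, the recursion gives $a_m\ge t(m)\,a_0\ge T_m$, so $\limsup T_j^{1/j}=\infty$ forces $\limsup a_m^{1/m}=\infty$. A strong Horton law with finite exponent $R$ would entail $a_m/a_{m-1}\to R$ and therefore $a_m^{1/m}\to R<\infty$, contradicting the previous line; hence some limit $\lim_K\cN_k[K]/\cN_1[K]$ must fail to exist as a positive real number. The step I expect to be most delicate is the uniqueness of $w_0$ as a boundary singularity of $A$, which is needed to rule out oscillations in $a_m R^{-m}$; this is free in our setting because $t(1)\ge 2$ by construction, so no separate aperiodicity hypothesis is required beyond $\limsup T_j^{1/j}<\infty$.
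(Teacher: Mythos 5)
Your route is, in outline, exactly the one the paper delegates to \cite{KZ16} (the paper contains no proof of Thm.~\ref{thm:HLSST}; it only remarks that the binary derivation of \cite{KZ16} extends verbatim): the counting identity $N_k=\sum_{j=k+1}^{K}n_{k,j}$ yields your recursion, $a_m=\cN_{K-m}[K]$ is indeed $K$-independent, $A(z)=-1/\hat{t}(z)$ is correct, the aperiodicity argument on the circle $|z|=w_0$ (using $t(1)\ge 2>0$) is sound, and your constants check out: the limit in \eqref{eq:Nk} comes out as $1/\hat{t}'(w_0)>0$. (A slightly cleaner endgame, closer to \cite{KZ16}, is to set $p_l=t(l)w_0^l$, a probability distribution with $p_1>0$, and apply the Erd\H{o}s--Feller--Pollard renewal theorem to $b_m=a_m w_0^m$, giving $b_m\to 1/\bigl(w_0\hat{t}'(w_0)\bigr)$ without any contour analysis.) However, there are two genuine gaps.

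First, existence of the root $w_0$. Your intermediate-value step evaluates ``$\hat{t}(1/2)=\sum_k T_k2^{-k}\ge 0$,'' but \eqref{eq:tamed} only guarantees that the radius of convergence $r=\bigl(\limsup_j T_j^{1/j}\bigr)^{-1}$ of $\hat{t}$ is positive, and $r$ may be smaller than $1/2$. Then $\hat{t}$ can stay strictly negative on all of $[0,r)$ with a \emph{finite} limit at $r$: take $T_k=\epsilon c^k k^{-2}$ with $c>2$ and $\epsilon$ small, so $\lim_{z\to r-}\hat{t}(z)=-1+2/c+\epsilon(\pi^2/6-1)<0$. In this defective regime $\hat{t}$ has no zero in $(0,1/2]$, $z=r$ is a singularity of both $\hat{t}$ and $A$, your pole extraction has nothing to extract, and \eqref{eq:Nk} genuinely fails (defective renewal sequences with regularly varying steps satisfy $a_m\sim\mathrm{const}\cdot t(m)$, so $a_{K-1}R^{-K}\to 0$). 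To close this you must actually use the Galton--Watson hypothesis, which your argument never exploits beyond the recursion: as in the proof of Lem.~\ref{lem:q0}, the structure $T_{i,j}=\pi_i\,G(j)$ for $i\le j-2$ coming from Lem.~\ref{lem:GWtreeTij} shows that Toeplitz for a Galton--Watson measure forces $T_{k+1}/T_k=\pi_i/\pi_{i+1}$ to be constant in $k\ge 2$, i.e.\ a geometric tail; then $\hat{t}(z)\to+\infty$ as $z\to r-$, the root exists, and it lies strictly inside the disk of analyticity --- a condition ($w_0<r$) your residue computation also needs silently.

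Second, the converse. You only prove that the ratios $a_{K-k}/a_{K-1}$ cannot all converge to \emph{positive} limits, as your own hedge (``fail to exist as a positive real number'') concedes. But the limits could all exist and equal $0$: whenever $a_{m+1}/a_m\to\infty$ (e.g.\ superexponential $t(m)$), one gets $\lim_{K}\cN_k[K]/\cN_1[K]=0$ for every $k\ge 2$, which is fully consistent with $\limsup_m a_m^{1/m}=\infty$ and with the failure of Def.~\ref{def:Horton_mean}, yet contradicts nothing in your argument. The theorem asserts non-existence of the limit for some $k$, and that stronger conclusion is not reached; ruling out the zero-limit scenario needs a separate argument (and, for what it is worth, the geometric-tail rigidity above makes \eqref{eq:tamed} automatic for Galton--Watson measures satisfying Toeplitz, so the converse is really a statement about abstract Tokunaga sequences imported from \cite{KZ16}, where this case is precisely what must be addressed).
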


\section{Main results}

\subsection{Distribution of Horton orders and related functions}
Consider a collection of critical or subcritical Galton-Watson measures $\mathcal{GW}(\{q_k\})$ with $q_1=0$ on $\cT^|$.
Let $Q(z) =\sum\limits_{m=0}^\infty z^m q_m$ for $z \in [0,1]$ be the generating function of $\{q_k\}$. 
For $T\stackrel{d}{\sim}\mathcal{GW}(\{q_k\})$ we denote $\pi_j :=P\big({\sf ord}(T)=j\big)$. Finally, let $\sigma_0=0$ and $\sigma_j:=\sum\limits_{i=1}^j \pi_i$ ($j \geq 1$). 

\begin{lem}[{\bf Order distribution}]\label{lem:GWtreePIj}
Consider a Galton-Watson measure $\mathcal{GW}(\{q_k\})$ with $q_1=0$.
Assume criticality or subcriticality, i.e., $\sum\limits_{k=0}^\infty kq_k \leq 1$. Then,
\be\label{eqn:GWtreePIj}
\pi_1=q_0~~\text{ and }~~\pi_j={Q(\sigma_{j-1})-Q(\sigma_{j-2})-\pi_{j-1}Q'(\sigma_{j-2}) \over 1-Q'(\sigma_{j-1})}
~~(j \geq 2).
\ee
\end{lem}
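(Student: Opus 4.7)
The plan is to condition on the branching structure at the root's unique offspring and apply the recursive (hierarchical) definition of the Horton-Strahler order from \eqref{eq:HSorder_gen}. Since the root of $T$ has a single child $v$, we have ${\sf ord}(T) = {\sf ord}(v)$, and, conditional on $v$ having $k$ children $w_1,\ldots,w_k$, the descendant subtrees $T_{w_1},\ldots,T_{w_k}$ are independent copies of $\mathcal{GW}(\{q_k\})$, so the orders $X_i := {\sf ord}(T_{w_i})$ are independent with $P(X_i = m) = \pi_m$. The base case $\pi_1 = q_0$ is then immediate, since $\{{\sf ord}(T)=1\}$ is precisely the event that $v$ is a leaf.

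For $j\ge 2$, the rule \eqref{eq:HSorder_gen} shows that $\{{\sf ord}(T)=j\}$ decomposes as the disjoint union of case (a) ``$\max_i X_i = j$, attained by exactly one index,'' and case (b) ``$\max_i X_i = j-1$, attained by at least two indices.'' Using $P(X_i\le m) = \sigma_m$, the conditional probabilities given that $v$ has $k$ children are
\begin{equation*}
P(\mathrm{(a)} \mid k) = k\,\pi_j\,\sigma_{j-1}^{k-1}, \qquad P(\mathrm{(b)} \mid k) = \sigma_{j-1}^k - \sigma_{j-2}^k - k\,\pi_{j-1}\,\sigma_{j-2}^{k-1},
\end{equation*}
the latter obtained by subtracting from $P(\max X_i \le j-1)$ the probabilities of $\{\max X_i \le j-2\}$ and $\{\text{exactly one } X_i = j-1\}$. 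Summing over $k$ weighted by $q_k$ and recognizing the series $\sum_k q_k z^k = Q(z)$ and $\sum_k k q_k z^{k-1} = Q'(z)$ (where $q_1=0$ eliminates the only potentially problematic term), one arrives at
\begin{equation*}
\pi_j = \pi_j\,Q'(\sigma_{j-1}) + Q(\sigma_{j-1}) - Q(\sigma_{j-2}) - \pi_{j-1}\,Q'(\sigma_{j-2}),
\end{equation*}
and solving for $\pi_j$ produces \eqref{eqn:GWtreePIj}. The division by $1-Q'(\sigma_{j-1})$ is legitimate because $\sigma_{j-1} < 1$ for every finite $j$ in any nontrivial case, while $Q'$ is nondecreasing on $[0,1]$ with $Q'(1)\le 1$ under the (sub)criticality hypothesis.

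The main obstacle I anticipate is the combinatorial bookkeeping for case (b), where one must correctly implement the inclusion-exclusion among $\{\max X_i \le j-1\}$, $\{\max X_i \le j-2\}$, and $\{\text{exactly one } X_i = j-1\}$ and then verify that the resulting $k$-sums against $q_k$ align precisely with the difference $Q(\sigma_{j-1}) - Q(\sigma_{j-2})$ and the derivative term $\pi_{j-1}Q'(\sigma_{j-2})$. Once that matching is established, the remainder is routine algebraic rearrangement; a quick sanity check on the boundary contributions $k=0$ and $k=1$ (which vanish either trivially or because $q_1=0$) confirms consistency of the decomposition.
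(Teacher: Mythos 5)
Your proof is correct and follows essentially the same route as the paper's: both decompose the event $\{{\sf ord}(T)=j\}$ according to whether the root's offspring is the terminal vertex of an order-$j$ branch (two or more children of order $j-1$) or a regular vertex of order $j$ (exactly one child of order $j$), sum against $q_k$ to get $\pi_j=\pi_jQ'(\sigma_{j-1})+Q(\sigma_{j-1})-Q(\sigma_{j-2})-\pi_{j-1}Q'(\sigma_{j-2})$, and solve for $\pi_j$. The only cosmetic difference is that you compute the terminal-vertex probability by inclusion-exclusion, $\sigma_{j-1}^k-\sigma_{j-2}^k-k\pi_{j-1}\sigma_{j-2}^{k-1}$, where the paper writes the binomial sum $\sum_{\ell=2}^{k}\binom{k}{\ell}\pi_{j-1}^{\ell}\sigma_{j-2}^{k-\ell}$ and evaluates it to the same expression.
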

\begin{proof}
The probability of tree $T$ with a single leaf is $\pi_1=P\big({\sf ord}(T)=1\big)=q_0$.

Next we find the Horton-Strahler order of the offspring of the root using the rule \eqref{eq:HSorder_gen}.
The probability that the offspring of the root is a terminal vertex of a branch of order $j$, $j \geq 2$, is 
$$\sum\limits_{m=2}^\infty q_m \sum\limits_{\ell=2}^m \binom{m}{\ell} \pi_{j-1}^\ell \sigma_{j-2}^{m-\ell}=Q(\sigma_{j-1})-Q(\sigma_{j-2})-\pi_{j-1}Q'(\sigma_{j-2}).$$
Here we take a sum over all possible numbers $m\ge 2$ of offsprings, 
and calculate the probability that $\ell\ge 2$ of the offsprings have order $j-1$,
while the other $m-\ell$ offsprings have orders less than $j-1$.

Similarly, the probability of the offspring of the root to be a regular (non-terminal) 
vertex of order $j$ equals 
\be\label{eqn:RegVertexQp}
\sum\limits_{m=2}^\infty q_m m\pi_j \sigma_{j-1}^{m-1}=\pi_jQ'(\sigma_{j-1}).
\ee 
Therefore, 
$$\pi_j =\pi_jQ'(\sigma_{j-1})+\Big(Q(\sigma_{j-1})-Q(\sigma_{j-2})-\pi_{j-1}Q'(\sigma_{j-2})\Big),$$
which implies \eqref{eqn:GWtreePIj}.
\end{proof}

%%%%%%%%%%%%%%%%%%%%%%%%%%%%%%%%%%%%%%
\begin{figure}[t] %[p] [t]
\centering\includegraphics[width=0.75\textwidth]{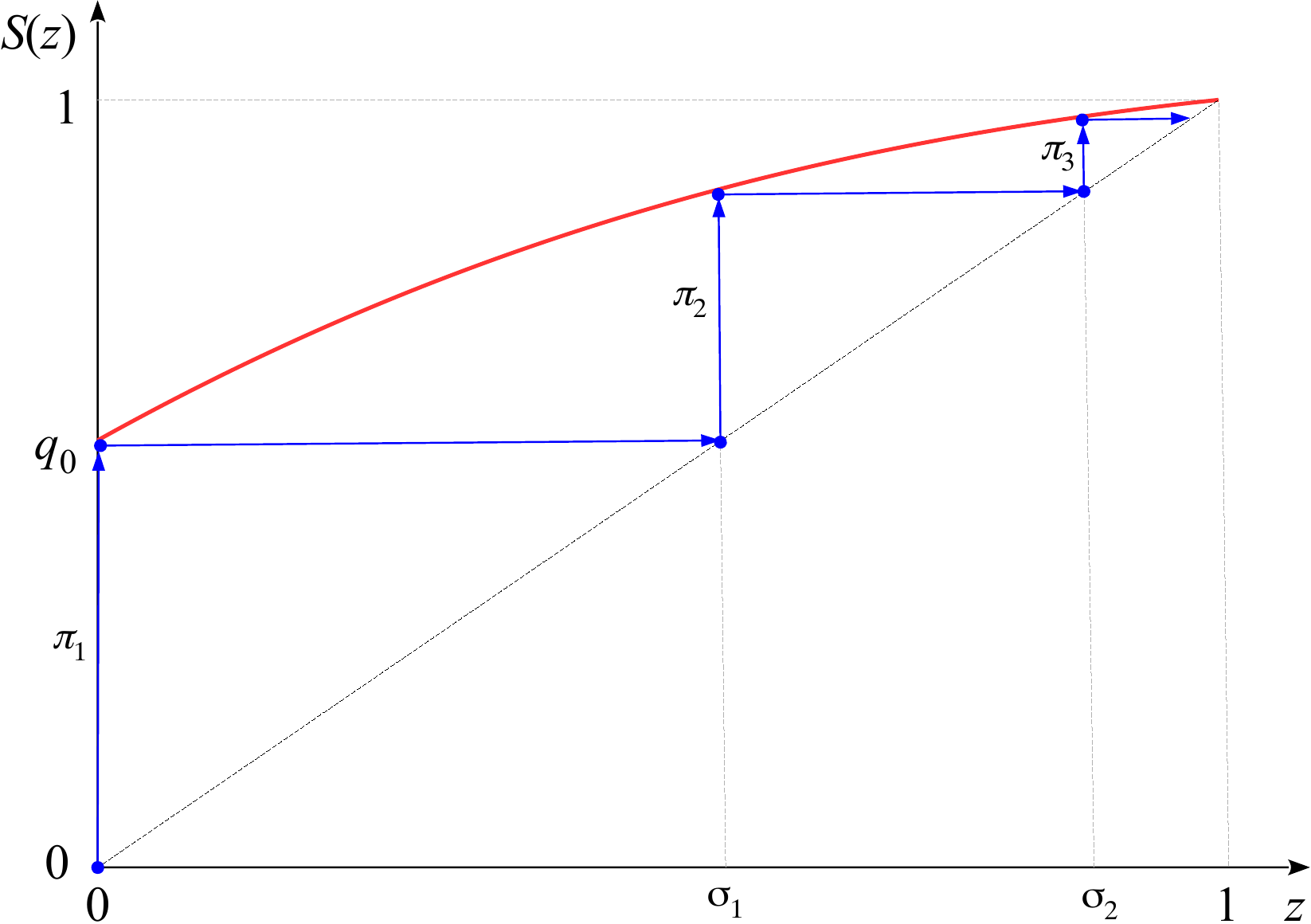}
\caption[Illustration to Cor.~\ref{cor:GWtreePIj}]
{Illustration to Cor.~\ref{cor:GWtreePIj}.
Function $S(z)$ is shown in red. 
Equation~\eqref{eqn:GWtreeSigmaj} implies that the values of $\sigma_j$
are obtained by iterative application of $S(t)$, starting with $\sigma_0=0$. 
These iterations are illustrated by blue lines with arrows.
Vertical increments correspond to the values of $\pi_j$.}
\label{fig:rec}
\end{figure}

\begin{cor}\label{cor:GWtreePIj}
Consider a Galton-Watson measure $\mathcal{GW}(\{q_k\})$ with $q_1=0$.
Assume criticality or subcriticality, i.e., $\sum\limits_{k=0}^\infty kq_k \leq 1$. 
Then, $\sigma_j$ can be expressed via an iterated function (Fig.~\ref{fig:rec}) 
\be\label{eqn:GWtreeSigmaj}
\sigma_j=\underbrace{S \circ  \hdots \circ S}_{j \text{ times}}(0) \qquad \text{ for }~j \geq 1,
\ee
where
\be\label{eqn:GWtreeSx}
S(z)={Q(z)-zQ'(z) \over 1-Q'(z)}.
\ee
\end{cor}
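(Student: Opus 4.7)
The plan is to prove \eqref{eqn:GWtreeSigmaj} by induction on $j$, using Lem.~\ref{lem:GWtreePIj} as the sole input and exploiting the telescoping identity $\sigma_j=\sigma_{j-1}+\pi_j$. The target is simply the recursion $\sigma_j=S(\sigma_{j-1})$, and all the work lies in rewriting the quotient in \eqref{eqn:GWtreePIj} in the convenient form \eqref{eqn:GWtreeSx}.

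For the base case $j=1$, I would just note that $\sigma_1=\pi_1=q_0$ by Lem.~\ref{lem:GWtreePIj}, and that the assumption $q_1=0$ gives $Q(0)=q_0$ and $Q'(0)=0$, so $S(0)=q_0$ as well. For the inductive step, assume $\sigma_{j-1}=S(\sigma_{j-2})$. Clearing denominators in this equation and rearranging gives the key intermediate identity
\begin{equation}\label{eq:key-intermediate}
\sigma_{j-1}=Q(\sigma_{j-2})+\pi_{j-1}\,Q'(\sigma_{j-2}),
\end{equation}
using $\pi_{j-1}=\sigma_{j-1}-\sigma_{j-2}$. Substituting \eqref{eq:key-intermediate} into the numerator of \eqref{eqn:GWtreePIj} collapses it to
\begin{equation*}
\pi_j\bigl(1-Q'(\sigma_{j-1})\bigr)=Q(\sigma_{j-1})-\sigma_{j-1},
\end{equation*}
which, after adding $\sigma_{j-1}\bigl(1-Q'(\sigma_{j-1})\bigr)$ to both sides and using $\sigma_j=\sigma_{j-1}+\pi_j$, yields
\begin{equation*}
\sigma_j\bigl(1-Q'(\sigma_{j-1})\bigr)=Q(\sigma_{j-1})-\sigma_{j-1}Q'(\sigma_{j-1}),
\end{equation*}
i.e.\ $\sigma_j=S(\sigma_{j-1})$, completing the induction.

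The only mild obstacle is ensuring the denominator $1-Q'(\sigma_{j-1})$ is nonzero so that $S$ is well-defined at $\sigma_{j-1}$; this is already implicit in the statement of Lem.~\ref{lem:GWtreePIj}, but I would briefly justify it by noting that $Q'$ is nondecreasing on $[0,1]$ with $Q'(1)=\sum kq_k\le 1$, that equality $Q'(\sigma_{j-1})=1$ would force $\sigma_{j-1}=1$ in the critical case, and that in the subcritical case $Q'(z)<1$ for all $z\in[0,1]$. Hence the recursion $\sigma_j=S(\sigma_{j-1})$ is valid for every $j\ge 1$, which together with $\sigma_0=0$ gives the claimed iterated-function representation \eqref{eqn:GWtreeSigmaj}.
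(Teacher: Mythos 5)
Your proof is correct and is essentially the paper's argument recast as an induction: the paper rewrites \eqref{eqn:GWtreePIj} in telescoping form and sums over $i\le j$ to obtain $\sigma_j=Q(\sigma_{j-1})+\pi_jQ'(\sigma_{j-1})$, which is exactly the identity your inductive step establishes (your intermediate equation $\sigma_{j-1}=Q(\sigma_{j-2})+\pi_{j-1}Q'(\sigma_{j-2})$ is this same identity at step $j-1$), and the final rearrangement to $\sigma_j=S(\sigma_{j-1})$ coincides with the paper's. Your closing check that $1-Q'(\sigma_{j-1})\neq 0$ is a sound extra detail that the paper leaves implicit.
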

\begin{proof}
Equation \eqref{eqn:GWtreePIj} implies
\be\label{eqn:iterQandPIj}
\pi_j=\big[Q(\sigma_{j-1})+\pi_jQ'(\sigma_{j-1})\big]-\big[Q(\sigma_{j-2})+\pi_{j-1}Q'(\sigma_{j-2})\big] \qquad \text{ for }~j \geq 2.
\ee
Hence, summing up the terms in \eqref{eqn:iterQandPIj}, and substituting $\pi_1=q_0$, we obtain
$$\sigma_j=\sum\limits_{i=1}^j \pi_i=Q(\sigma_{j-1})+\pi_jQ'(\sigma_{j-1})=Q(\sigma_{j-1})+(\sigma_j-\sigma_{j-1})Q'(\sigma_{j-1})$$
for all $j \geq 1$. Thus, $\sigma_j={Q(\sigma_{j-1})-\sigma_{j-1}Q'(\sigma_{j-1}) \over 1-Q'(\sigma_{j-1})}=S(\sigma_{j-1})$.
\end{proof}

\medskip
\noindent
%Function $S(z)$ admits analytic continuation beyond $|z| <1$, encircling the singularity at $1$, where we set
Set $S(1)=\lim\limits_{x \rightarrow 1-} {Q(x)-xQ'(x) \over 1-Q'(x)}$. 
Then, by L'H\^{o}pital's rule,
$\,S(1)=\lim\limits_{x \rightarrow 1-}{xQ''(x) \over Q''(x)}=1$.
Next, for the progeny variable $X \stackrel{d}{\sim} \{q_k\}$, consider the following important function
\be\label{eqn:gxDef}
g(x)=\sum\limits_{m=0}^\infty {\sf E}\big[(X-m-1)_+\big] \, x^m ~=\sum\limits_{m=0}^\infty \sum\limits_{k=m+1}^\infty (k-m-1)q_k \,x^m,
\ee
where $x_+=\max\{x,0\}$.

\begin{prop}\label{prop:Qminusx}
For a critical (i.e., $Q'(1)=1$) Galton-Watson process $\mathcal{GW}(\{q_k\})$ with $q_1=0$, we have
$$Q(x)-x=(1-x)^2 g(x)$$
for $g(x)$ as defined in \eqref{eqn:gxDef}.
\end{prop}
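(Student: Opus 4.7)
The plan is to exhibit the double zero of $Q(x) - x$ at $x = 1$ explicitly, then factor $(1-x)^2$ out combinatorially and match what remains term-by-term with the series defining $g(x)$. The whole argument is an algebraic identity on formal power series; since $\sum q_k = \sum k q_k = 1 < \infty$, all series converge absolutely on $\{|x| \le 1\}$, so I would not have to worry about analytic subtleties.

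First I would use the two constraints $Q(1) = 1$ and $Q'(1) = 1$ (normalization and criticality) to rewrite
\[
Q(x) - x = \bigl[Q(x) - Q(1)\bigr] - Q'(1)(x - 1) = \sum_{k \geq 0} q_k\bigl[(x^k - 1) - k(x - 1)\bigr].
\]
The $k = 0$ and $k = 1$ terms vanish (the latter because $q_1 = 0$), so the sum effectively runs over $k \geq 2$.

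Next I would extract two powers of $(x-1)$ via the geometric identity $x^k - 1 = (x-1)\sum_{j=0}^{k-1} x^j$. Applied once, it yields $(x^k - 1) - k(x-1) = (x-1)\sum_{j=1}^{k-1}(x^j - 1)$; applied a second time to each $x^j - 1$, it gives $(x-1)^2 \sum_{j=1}^{k-1}\sum_{i=0}^{j-1} x^i$. At this stage the factor $(1-x)^2 = (x-1)^2$ is already exposed, and the remaining task is to identify the coefficient series with $g$.

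Finally, I would swap the order of summation. For fixed $k$, the coefficient of $x^m$ in $\sum_{j=1}^{k-1}\sum_{i=0}^{j-1}x^i$ equals the number of $j$ with $m+1 \le j \le k-1$, namely $k-1-m$ for $0 \le m \le k-2$. Summing against $q_k$ and interchanging sums (by Tonelli, all terms being nonnegative for $x \in [0,1]$) gives
\[
\sum_{k \geq 2} q_k \sum_{j=1}^{k-1}\sum_{i=0}^{j-1} x^i = \sum_{m \ge 0} x^m \sum_{k \ge m+2}(k - m - 1) q_k = g(x),
\]
where the inner range can be enlarged to $k \ge m+1$ at no cost since the $k = m+1$ term is zero. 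Combining yields $Q(x) - x = (1-x)^2 g(x)$. There is no real obstacle here; the only point requiring minimal care is the interchange of summation, which is immediate from nonnegativity.
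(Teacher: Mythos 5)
Your proposal is correct and follows essentially the same route as the paper's proof: both use criticality and normalization to write $Q(x)-x$ as $\sum_{k\ge 2}q_k\bigl[(x^k-1)-k(x-1)\bigr]$ (the paper does this by substituting $q_0=\sum_{k\ge2}(k-1)q_k$), extract $(1-x)^2$ by applying the geometric identity twice, and then interchange summation to count the $j$'s and recover the coefficients ${\sf E}\bigl[(X-m-1)_+\bigr]$ of $g(x)$. Your explicit appeal to Tonelli for the interchange and the remark that the $k=m+1$ term vanishes are minor points the paper leaves implicit, but there is no substantive difference.
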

\begin{proof}
Since $\sum\limits_{k=2}^\infty kq_k=Q'(1)=1$, 
\begin{align}\label{eqn:QminusxCrit}
Q(x)-x&=q_0+\sum\limits_{k=2}^\infty q_k x^k ~-q_0x-\sum\limits_{k=2}^\infty q_kx 
~~=(1-x)\left[q_0-\sum\limits_{k=2}^\infty q_k {1 -x^{k-1} \over 1-x}x\right] \nonumber \\
&=(1-x)\left[\sum\limits_{k=2}^\infty kq_k-\sum\limits_{k=2}^\infty q_k-\sum\limits_{k=2}^\infty q_k {1 -x^{k-1} \over 1-x}x\right] 
=(1-x)\sum\limits_{k=2}^\infty q_k \left(k-1-\sum\limits_{j=1}^{k-1} x^j \right) \nonumber \\
&=(1-x)\sum\limits_{k=2}^\infty q_k \left(\sum\limits_{j=1}^{k-1} (1-x^j) \right)
=(1-x)^2 \sum\limits_{k=2}^\infty q_k \sum\limits_{j=1}^{k-1} \sum\limits_{m=0}^{j-1}x^m \nonumber \\
&=(1-x)^2 \sum\limits_{k=2}^\infty q_k \sum\limits_{m=0}^{k-2} (k-m-1)x^m 
=(1-x)^2  \sum\limits_{m=0}^\infty \sum\limits_{k=m+2}^\infty (k-m-1)q_k x^m \nonumber \\
&=(1-x)^2 g(x).
\end{align}
\end{proof}

\medskip
\noindent
Let $L$ denote the limit $\,\lim\limits_{x \rightarrow 1-}\left({\ln{g(x)} \over -\ln(1-x)}\right)\,$ whenever the limit exists.

\medskip
%\noindent
%Next, we state the following assumption that we will need later.
%\begin{assum}\label{asm:limitOne}
%For $g(x)$ as defined in \eqref{eqn:gxDef},
%$$\lim\limits_{x \rightarrow 1-}{\ln{g(x)} \over \ln(1-x)}=0.$$
%\end{assum}

\begin{lem}\label{lem:2plusMoment}
For the progeny variable $X \stackrel{d}{\sim} \{q_k\}$ and $g(x)$ as defined in \eqref{eqn:gxDef}, if
\be\label{eqn:2plusMoment}
{\sf E}[X^{2-\epsilon}]=\sum \limits_{k=0}^\infty k^{2-\epsilon}q_k ~<\infty \qquad \forall \epsilon>0,
\ee
then $~L=\lim\limits_{x \rightarrow 1-}\left({\ln{g(x)} \over -\ln(1-x)}\right)=0$.
\end{lem}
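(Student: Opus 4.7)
\medskip

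\noindent\textbf{Proof proposal.} The plan is to bound the coefficients
$a_m:={\sf E}[(X-m-1)_+]=\sum_{k>m}(k-m-1)q_k$ using the moment hypothesis,
then upgrade the coefficient bound to a bound on the power series
$g(x)=\sum_{m=0}^{\infty}a_m x^m$ near $x=1$, and finally read off $L=0$ by
sandwiching $\ln g(x)/(-\ln(1-x))$ from above and below. Fix $\epsilon\in(0,1)$.
Since $(k/m)^{1-\epsilon}\ge 1$ for $k>m$, we have
\[
a_m\;\le\;\sum_{k>m}k\,q_k\;\le\;m^{-(1-\epsilon)}\sum_{k>m}k^{2-\epsilon}q_k
\;=\;o\!\left(m^{-(1-\epsilon)}\right),
\]
so that by hypothesis there is a constant $C_{\epsilon}$ with
$a_m\le C_{\epsilon}\,m^{-(1-\epsilon)}$ for all $m\ge 1$.

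Next I would use this coefficient bound to estimate $g(x)$ as $x\to 1-$.
Splitting the series at the threshold $M=\lfloor 1/(1-x)\rfloor$, the
``head'' contributes at most
\[
C_{\epsilon}\sum_{m=1}^{M}m^{-(1-\epsilon)}\;\le\;C'_{\epsilon}\,M^{\epsilon}
\;\le\;C'_{\epsilon}\,(1-x)^{-\epsilon},
\]
while for the ``tail'' one uses monotonicity of $m^{-(1-\epsilon)}$ together
with the elementary estimate $x^{M+1}/(1-x)\le e^{-1}/(1-x)$:
\[
C_{\epsilon}\sum_{m>M}m^{-(1-\epsilon)}x^{m}
\;\le\;C_{\epsilon}\,M^{-(1-\epsilon)}\,\frac{x^{M+1}}{1-x}
\;\le\;C''_{\epsilon}\,(1-x)^{-\epsilon}.
\]
Adding the two pieces (and the finite constant $a_0$) gives
$g(x)\le \widetilde C_{\epsilon}(1-x)^{-\epsilon}$ uniformly for $x$ close to $1$,
hence
\[
\limsup_{x\to 1-}\frac{\ln g(x)}{-\ln(1-x)}\;\le\;\epsilon.
\]

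For the matching lower bound, I would just note that $g$ has non-negative
coefficients and is therefore non-decreasing on $[0,1)$, so
$g(x)\ge g(0)=a_0={\sf E}[(X-1)_+]$. If $a_0>0$ then
$\ln g(x)\ge\ln a_0$ and $\ln g(x)/(-\ln(1-x))\to 0^{+}$; in the degenerate
case $a_0=0$ (which forces $q_k=0$ for $k\ge 2$, i.e.\ the offspring is
deterministic and the lemma is trivial) the statement is immediate.
Combining the two one-sided bounds, $L\le\epsilon$ for every $\epsilon>0$
and $L\ge 0$, giving $L=0$. The only mildly delicate step is the tail
estimate inside the power-series bound, where one has to pick the splitting
threshold carefully; everything else is bookkeeping with the moment
hypothesis.
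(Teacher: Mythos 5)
Your proof is correct, and it follows the same skeleton as the paper's: establish the coefficient decay ${\sf E}\big[(X-m-1)_+\big]=o\big(m^{\epsilon-1}\big)$, upgrade this to $g(x)=O\big((1-x)^{-\epsilon}\big)$ as $x\to1-$, conclude $\limsup_{x\to1-}\frac{\ln g(x)}{-\ln(1-x)}\le\epsilon$ for every $\epsilon>0$, and handle the lower bound by monotonicity of $g$. The two technical steps are executed differently, though. For the coefficient bound, the paper uses dominated convergence via $(m+1)^{1-\epsilon}\,{\sf E}\big[(X-m-1)_+\big]\le {\sf E}\big[X^{2-\epsilon}{\bf 1}_{\{X\ge m+1\}}\big]\to0$, whereas you use the direct tail estimate $a_m\le m^{-(1-\epsilon)}\sum_{k>m}k^{2-\epsilon}q_k$ -- same content, equally valid. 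For the upgrade from coefficients to the function, the paper compares the coefficients of $g$ termwise with those of $(1-x)^{-\epsilon}$, invoking the Gamma-function asymptotics $\Gamma(\epsilon+m)/\big(\Gamma(\epsilon)\,m!\big)\sim m^{\epsilon-1}/\Gamma(\epsilon)$ of \eqref{eqn:1minusX}; you instead split the series at $M=\lfloor 1/(1-x)\rfloor$ and bound head and tail separately. Your split is more elementary and self-contained (no special-function asymptotics needed), at the cost of slightly more bookkeeping; both routes are sound. One quibble with your degenerate case: if $a_0={\sf E}\big[(X-1)_+\big]=0$ then, since $a_m$ is nonincreasing in $m$, in fact $g\equiv0$, so $\ln g(x)\equiv-\infty$ and the ratio tends to $-\infty$, not $0$ -- the conclusion there is not ``immediate'' but vacuous (or false, depending on convention). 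This is harmless in context: with $q_1=0$, $a_0=0$ forces $q_0=1$, which is excluded wherever the lemma is applied (criticality gives $a_0={\sf E}[X]-P(X\ge1)=q_0>0$), and the paper's own claim that $\liminf\ge0$ is ``obvious'' tacitly relies on the same positivity -- but it is worth stating precisely rather than waving off as trivial.
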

\begin{proof}
Suppose \eqref{eqn:2plusMoment} holds, then by the Dominated Convergence Theorem,
as $m \rightarrow \infty$,
\be\label{eqn:Xmplus1}
(m+1)^{1-\epsilon} \,{\sf E}\big[(X-m-1)_+\big] \leq {\sf E}\big[X^{1-\epsilon}(X-m-1)_+\big]
\leq {\sf E}\big[X^{2-\epsilon} \,{\bf 1}_{\{X \geq m+1\}}\big] ~\rightarrow 0.
\ee
Accordingly, the $m$-th coefficient ${\sf E}\big[(X-m-1)_+\big]$ in the 
power series representation \eqref{eqn:gxDef} of $g(x)$ is 
$o\left(m^{\epsilon-1}\right)$.
Next, for $\epsilon>0$, the $m$-th coefficient of the power series 
expansion of $(1-x)^{-\epsilon}$ is
\be\label{eqn:1minusX}
%a\big((1-x)^{-\epsilon};m\big)=
{\prod\limits_{i=0}^{m-1}(\epsilon+i) \over m!}={\Gamma(\epsilon+m) \over \Gamma(\epsilon)m!} \sim \frac{m^{\epsilon-1}}{\Gamma(\epsilon)},\quad m\to\infty.
\ee
Together, \eqref{eqn:Xmplus1} and \eqref{eqn:1minusX} imply
\[\limsup\limits_{x \rightarrow 1-}{\ln{g(x)} \over \ln(1-x)^{-\epsilon}} \leq 1
\quad \Leftrightarrow \quad
\limsup\limits_{x \rightarrow 1-}{\ln{g(x)} \over -\ln(1-x)} \leq \epsilon \qquad \forall \epsilon>0.\]
Hence,
$~\limsup\limits_{x \rightarrow 1-}{\ln{g(x)} \over -\ln(1-x)} = 0$,
while obviously
$\,\liminf\limits_{x \rightarrow 1-}{\ln{g(x)} \over -\ln(1-x)} \geq 0$.
\end{proof}

\subsection{Regularity condition}
\label{sec:reg}
Many of the results of the paper are proved under the following assumption.
\begin{assum}\label{asm:reg}
The following limit exists:
\be\label{eqn:RegAsm}
S'(1)=\lim\limits_{x \rightarrow 1-}{1-S(x) \over 1-x}.
\ee
\end{assum}

\noindent
Observe that since $\,S(x)-x={Q(x)-x \over 1-Q'(x)}$,
Assumption \ref{asm:reg} is equivalent to the existence of the limit
\be\label{eqn:RegAsmQ}
\lim\limits_{x \rightarrow 1-}{Q(x)-x \over (1-x)\big(1-Q'(x)\big)} = 1-S'(1).
\ee

\medskip
\begin{lem}\label{lem:dSof1}
Consider a critical Galton-Watson measure $\mathcal{GW}(\{q_k\})$ with $q_1=0$.
If Assumption \ref{asm:reg} is satisfied, then for $g(x)$ defined in \eqref{eqn:gxDef} 
the following limit exists
\be\label{eqn:gLnLimL}
\lim\limits_{x \rightarrow 1-}\left({\ln{g(x)} \over -\ln(1-x)}\right)=L,
\ee
and
$\, S'(1)={1-L \over 2-L}$.
\end{lem}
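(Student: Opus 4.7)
The plan is to translate the hypothesis into a statement about the logarithmic derivative $g'(x)/g(x)$ and then identify this with $L$ via L'H\^opital's rule.

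First, using Proposition \ref{prop:Qminusx} together with a direct differentiation of $Q(x)-x=(1-x)^2 g(x)$, I would rewrite
$$1-Q'(x) \;=\; (1-x)\bigl[2g(x)-(1-x)g'(x)\bigr].$$
Inserting this and the elementary identity $S(x)-x=(Q(x)-x)/(1-Q'(x))$ into the definition of $S'(1)$ yields
$$\frac{1-S(x)}{1-x} \;=\; 1 - \frac{g(x)}{2g(x)-(1-x)g'(x)} \;=\; 1 - \frac{1}{2 - (1-x)g'(x)/g(x)}.$$
Hence Assumption \ref{asm:reg} is equivalent to the existence of $\alpha := \lim_{x \to 1-}(1-x)g'(x)/g(x)$, in which case $S'(1)=(1-\alpha)/(2-\alpha)$. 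The lemma is therefore reduced to proving that $L$ exists and equals $\alpha$.

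Next I would split into two cases according to whether $g(1)$ is finite. If $g(1)=+\infty$, then both $\ln g(x)$ and $-\ln(1-x)$ tend to $+\infty$ as $x\to 1-$, and the ratio of their derivatives is precisely $(1-x)g'(x)/g(x)\to\alpha$; L'H\^opital's rule then gives $L=\alpha$ directly. If instead $g(1)<\infty$, then $\ln g(x)$ stays bounded while $-\ln(1-x)\to\infty$, so $L=0$ by definition; it remains to confirm $\alpha=0$. But $g(1)<\infty$ is equivalent to $Q''(1)=2g(1)<\infty$, and a Taylor expansion at $1$ gives $(1-Q'(x))/(1-x)\to Q''(1)=2g(1)$. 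Substituting this into the identity of the previous paragraph forces $(1-x)g'(x) = 2g(x) - (1-Q'(x))/(1-x) \to 2g(1)-2g(1)=0$, so $\alpha=0=L$.

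Combining the two cases establishes both the existence of $L$ and the identity $S'(1)=(1-L)/(2-L)$. The computation is not hard conceptually, but the delicate step is the finite-$g(1)$ subcase: one has to notice that the regularity assumption automatically forces $(1-x)g'(x)\to 0$ there, which is exactly what makes $L$ and $\alpha$ match when they are computed from different-looking limits. Proposition \ref{prop:Qminusx} and the resulting closed form for $1-Q'(x)$ are what make this verification almost immediate.
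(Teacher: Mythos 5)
Your proof is correct, and it rests on the same two pillars as the paper's: the factorization $Q(x)-x=(1-x)^2g(x)$ of Proposition \ref{prop:Qminusx} and an application of L'H\^opital's rule whose derivative-ratio limit is supplied by Assumption \ref{asm:reg}. But the execution is genuinely different. The paper writes $\ln g(x)=\ln\big(Q(x)-x\big)-2\ln(1-x)$ and applies L'H\^opital to the single quotient $\ln\big(Q(x)-x\big)/\ln(1-x)$; criticality forces $Q(x)-x\to 0$, so this quotient is always of the form $-\infty/-\infty$, the ratio of derivatives is exactly $(1-x)\big(1-Q'(x)\big)/\big(Q(x)-x\big)=(1-x)/\big(S(x)-x\big)$, and Assumption \ref{asm:reg} in the equivalent form \eqref{eqn:RegAsmQ} hands over its limit $1/\big(1-S'(1)\big)$ in one chain, with no case analysis. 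You instead apply L'H\^opital directly to $\ln g(x)/\big(-\ln(1-x)\big)$, which requires your preliminary reformulation -- differentiating the factorization to get $1-Q'(x)=(1-x)\big[2g(x)-(1-x)g'(x)\big]$ and recasting Assumption \ref{asm:reg} as the existence of $\alpha=\lim_{x\to 1-}(1-x)g'(x)/g(x)$ -- and it forces the split on whether $g(1)$ is finite, since $\ln g(x)$ need not diverge (with the strong form of L'H\^opital, which only needs the denominator to diverge, the split is avoidable). Both of your cases check out, and the finite-$g(1)$ case, via $g(1)=Q''(1)/2$, correctly recovers $\alpha=0=L$, consistent with Lemma \ref{lem:2ndMomentAsm}. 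One point you gloss over: deducing the existence of $\alpha$ from that of $S'(1)$ by inverting $S'(1)=(1-\alpha)/(2-\alpha)$ requires $(1-x)g'(x)/g(x)$ to stay in a range where $u\mapsto(1-u)/(2-u)$ is invertible, e.g.\ $u\in[0,1]$; this is true because $g'\ge 0$ and $S(x)\le 1$ (convexity of $Q$ gives $1-Q(x)\ge (1-x)Q'(x)$), but it deserves a sentence. On balance, your route buys a self-contained characterization of the regularity assumption purely in terms of $g$, plus the observation that $\alpha$ exists automatically when $Q''(1)<\infty$; the paper's route buys brevity and a uniform, case-free argument.
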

\begin{proof}
By the L'H\^{o}pital's rule,
\begin{align*}\label{eqn:dSof1}
L&=\lim\limits_{x \rightarrow 1-}\left({\ln{g(x)} \over -\ln(1-x)}\right)=2-\lim\limits_{x \rightarrow 1-}{\ln{g(x)}+2\ln(1-x) \over \ln(1-x)}
=2-\lim\limits_{x \rightarrow 1-}{\ln\big(Q(x)-x\big) \over \ln(1-x)} \nonumber \\
&=2-\lim\limits_{x \rightarrow 1-}{{d \over dx}\ln\big(Q(x)-x\big) \over {d \over dx}\ln(1-x)}=2-\lim\limits_{x \rightarrow 1-}{(1-x)\big(1-Q'(x)\big) \over Q(x)-x}
=2-\lim\limits_{x \rightarrow 1-}{1-x \over S(x)-x} \nonumber \\
&=2-{1 \over 1-S'(1)}.
\end{align*}
\end{proof}

\medskip
\begin{rem}\label{rem:gapLHrule}
Notice that due to the conditions for the L'H\^{o}pital's rule, there are cases when the limit in \eqref{eqn:gLnLimL} exists 
while the limit in \eqref{eqn:RegAsm} does not exists. Indeed, the L'H\^{o}pital's rule in  \eqref{eqn:gLnLimL} holds under the condition
that the limit $\lim\limits_{x \rightarrow 1-}{{d \over dx}\ln(Q(x)-x) \over {d \over dx}\ln(1-x)}$ exists, or diverges to infinity.
This gap will be apparent in the context of Lem.~\ref{lem:LimitExists} and
Thm.~\ref{thm:completeGW}.
\end{rem}

\medskip
\begin{rem}\label{rem:subcritical}
Assumption \ref{asm:reg} is satisfied with $S'(1)=0$ for a subcritical Galton-Watson process $\mathcal{GW}(\{q_k\})$ with $q_1=0$. Indeed, we have
\begin{align*}%\label{eqn:Qminusx}
\lim\limits_{x \rightarrow 1-}{Q(x)-x \over 1-x} &=\lim\limits_{x \rightarrow 1-}\left(q_0-\sum\limits_{k=2}^\infty q_k {1 -x^{k-1} \over 1-x}x\right) 
=\lim\limits_{x \rightarrow 1-}\left(q_0-\sum\limits_{k=2}^\infty (k-1)q_k \right) \nonumber \\
&=1-\sum\limits_{k=2}^\infty kq_k =1-Q'(1) >0
\end{align*}
and, therefore,
\be\label{eqn:dSof1subcrit}
S'(1)=1-\lim\limits_{x \rightarrow 1-}{Q(x)-x \over (1-x)\big(1-Q'(x)\big)}=1-{1-Q'(1) \over 1-Q'(1)}=0.
\ee
\end{rem}

\medskip
\begin{lem}\label{lem:2ndMomentAsm}
Consider a critical Galton-Watson measure $\mathcal{GW}(\{q_k\})$ with $q_1=0$.
If the second moment of the offspring distribution is finite,
$${\sf E}[X^2]=\sum \limits_{k=0}^\infty k^2q_k ~<\infty,$$
then Assumption \ref{asm:reg} is satisfied with $\,S'(1)={1 \over 2}$ and 
$L=\lim\limits_{x \rightarrow 1-}\left({\ln{g(x)} \over -\ln(1-x)}\right)=0$.
\end{lem}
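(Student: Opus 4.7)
The plan is to work through the equivalent form \eqref{eqn:RegAsmQ} of Assumption \ref{asm:reg} by Taylor-expanding $Q$ near $x=1$, then invoke Lemma \ref{lem:2plusMoment} (or alternatively Lemma \ref{lem:dSof1}) to identify $L$.

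First, I would exploit the hypothesis ${\sf E}[X^2]<\infty$ to justify a second-order expansion of $Q$ at $x=1$. Monotone convergence applied to $Q''(x)=\sum_{k\ge 2} k(k-1)q_k x^{k-2}$ shows that $Q''(x) \nearrow Q''(1)=\sum_{k\ge 2}k(k-1)q_k<\infty$ as $x\to 1-$. Combined with the criticality assumption $Q'(1)=1$, this yields the asymptotics
\[
Q(x)-x = \tfrac{1}{2}Q''(1)(1-x)^2 + o\big((1-x)^2\big), \qquad 1-Q'(x)=Q''(1)(1-x)+o(1-x)
\]
as $x\to 1-$.

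Next, I would substitute these expansions into the equivalent form \eqref{eqn:RegAsmQ} of Assumption \ref{asm:reg}:
\[
\lim_{x\to 1-}\frac{Q(x)-x}{(1-x)\big(1-Q'(x)\big)}
=\lim_{x\to 1-}\frac{\tfrac{1}{2}Q''(1)(1-x)^2+o((1-x)^2)}{(1-x)\cdot\big(Q''(1)(1-x)+o(1-x)\big)}
=\tfrac{1}{2}.
\]
The limit exists, so Assumption \ref{asm:reg} holds, and it equals $1-S'(1)$, giving $S'(1)=\tfrac{1}{2}$.

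Finally, for the assertion $L=0$, the quickest route is to observe that the finite second moment trivially implies ${\sf E}[X^{2-\epsilon}]<\infty$ for every $\epsilon>0$, so Lemma \ref{lem:2plusMoment} applies directly and yields $L=0$. As a consistency check one may also note that the formula $S'(1)=(1-L)/(2-L)$ from Lemma \ref{lem:dSof1} gives $L=0$ from $S'(1)=1/2$. No step here is really an obstacle; the only subtlety is justifying the Taylor expansion up to second order at the boundary of the disk of convergence, which is handled by the monotone-convergence argument above.
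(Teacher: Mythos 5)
Your proposal is correct and reaches the same conclusions by essentially the same computation, with one small divergence worth noting. For $S'(1)=\tfrac12$, the paper applies L'H\^{o}pital's rule to get $\lim_{x\to1-}\frac{Q(x)-x}{(1-x)^2}=\frac{Q''(1)}{2}$ and then forms the quotient in \eqref{eqn:RegAsmQ}; your second-order Taylor expansion justified by monotone convergence of $Q''(x)\nearrow Q''(1)$ is just a hands-on version of the same step (and arguably cleaner, since it makes the boundary regularity explicit rather than leaving it inside the L'H\^{o}pital hypotheses; note also that criticality with $q_1=0$ forces $Q''(1)>0$, so the quotient is well defined in both arguments). Where you genuinely differ is the identification of $L$: the paper deduces $L=0$ from Lem.~\ref{lem:dSof1}, i.e., from the relation $S'(1)=\frac{1-L}{2-L}$ once Assumption~\ref{asm:reg} is established, whereas your primary route invokes Lem.~\ref{lem:2plusMoment} via the trivial implication ${\sf E}[X^2]<\infty\Rightarrow{\sf E}[X^{2-\epsilon}]<\infty$ for all $\epsilon>0$. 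Your route has the minor advantage of being independent of Assumption~\ref{asm:reg} (Lem.~\ref{lem:2plusMoment} needs only the moment bound), while the paper's route is shorter given that $S'(1)$ has already been computed; you correctly observe that the dSof1 relation then serves as a consistency check, so both halves of the statement are fully covered.
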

\begin{proof}
By L'H\^{o}pital's rule,
$$\lim\limits_{x \rightarrow 1-}{Q(x)-x \over (1-x)^2}={1 \over 2} \lim\limits_{x \rightarrow 1-}{1-Q'(x) \over 1-x}={Q''(1) \over 2}.$$
Thus,
$$S'(1)=1-\lim\limits_{x \rightarrow 1-}\left({Q(x)-x \over (1-x)^2}{1-x \over 1-Q'(x)}\right)=1-{Q''(1) \over 2Q''(1)}={1 \over 2},$$
and by Lem.~\ref{lem:dSof1}, $\,L=2-{1 \over 1-S'(1)}=0$.
\end{proof}

\medskip
\noindent
The next statement suggests a sufficient condition for Assumption \ref{asm:reg}.
\begin{lem}[{{\bf Regularity condition}}]\label{lem:RegCond}
Consider a critical Galton-Watson process $\mathcal{GW}(\{q_k\})$ with $q_1=0$ and infinite second moment, 
i.e., $\sum\limits_{k=0}^\infty k^2 q_k =\infty$.
Suppose that for the progeny variable $X \stackrel{d}{\sim} \{q_k\}$ the following
limit exists:
\be\label{eqn:RegCondLim}
\Lambda=\lim\limits_{k \rightarrow \infty}{k \over E[X\,|\,X\geq k]}=\lim\limits_{k \rightarrow 
\infty}{~k\sum\limits_{m=k}^\infty \!q_m ~ \over \sum\limits_{m=k}^\infty \! mq_m}.
\ee 
Then Assumption \ref{asm:reg} is satisfied with $S'(1)=\Lambda$.
\end{lem}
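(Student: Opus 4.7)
The plan is to rewrite the quantity appearing in the equivalent form \eqref{eqn:RegAsmQ} of Assumption \ref{asm:reg} as a ratio of two power series with non-negative coefficients, to check that their coefficients are asymptotically proportional with factor $1-\Lambda$, and then to pass from coefficients to values at $x\to 1-$ via a standard Abelian argument.

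First I would set $F(k)=P(X\geq k)$ and $M(k)={\sf E}[X\,{\bf 1}_{\{X\geq k\}}]$, so that the hypothesis \eqref{eqn:RegCondLim} reads $(m+2)F(m+2)/M(m+2)\to\Lambda$ as $m\to\infty$. Factoring $1-x^{k-1}=(1-x)(1+x+\cdots+x^{k-2})$ inside $1-Q'(x)=\sum_{k\geq 2}kq_k(1-x^{k-1})$ and switching the order of summation yields
\[1-Q'(x)\,=\,(1-x)\,B(x),\qquad B(x)=\sum_{m\geq 0}b_m x^m,\qquad b_m=M(m+2).\]
Combined with Proposition \ref{prop:Qminusx}, which supplies $Q(x)-x=(1-x)^2 g(x)$ with $g(x)=\sum_m c_m x^m$ and
\[c_m={\sf E}[(X-m-1)_+]=M(m+2)-(m+1)F(m+2),\]
this gives the key identity
\[\frac{Q(x)-x}{(1-x)\bigl(1-Q'(x)\bigr)}=\frac{g(x)}{B(x)}.\]

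Next, coefficient asymptotics give
\[\frac{c_m}{b_m}=1-\frac{(m+1)F(m+2)}{M(m+2)}\ \longrightarrow\ 1-\Lambda,\]
directly from the hypothesis together with $(m+1)/(m+2)\to 1$. Interchanging summation, $\sum_{m\geq 0}b_m=\sum_{k\geq 2}k(k-1)q_k={\sf E}[X^2]-{\sf E}[X]=\infty$ under the infinite-second-moment assumption, so $B(x)\uparrow\infty$ as $x\to 1-$ by monotone convergence. The last step is a routine Abelian passage: given $\epsilon>0$, choose $N$ so that $|c_m-(1-\Lambda)b_m|<\epsilon b_m$ for $m\geq N$; then, for $x\in(0,1)$,
\[\bigl|g(x)-(1-\Lambda)B(x)\bigr|\ \leq\ P_N(x)+\epsilon B(x),\]
where $P_N(x)$ is a polynomial of degree less than $N$ absorbing the first few terms. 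Dividing by $B(x)$ and using $P_N(x)/B(x)\to 0$ yields $\limsup_{x\to 1-}|g(x)/B(x)-(1-\Lambda)|\leq\epsilon$, and as $\epsilon$ is arbitrary, $g(x)/B(x)\to 1-\Lambda$. Then \eqref{eqn:RegAsmQ} delivers $S'(1)=1-(1-\Lambda)=\Lambda$.

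The essential structural ingredient is Proposition \ref{prop:Qminusx} (which uses criticality), which cleanly identifies the ratio with $g(x)/B(x)$; beyond that, the main thing to watch will be treating the boundary cases $\Lambda=0$ and $\Lambda=1$ uniformly. Keeping the hypothesis in the form $(m+1)F(m+2)/M(m+2)\to\Lambda$, rather than working with $M/F$, automatically accommodates both endpoints without any renormalisation, so I do not expect a genuine obstacle — the infinite second moment assumption enters only to ensure that the Abelian denominator $B(x)$ diverges.
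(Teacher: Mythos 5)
Your proof is correct and follows essentially the same route as the paper's: the same identity $\frac{Q(x)-x}{(1-x)(1-Q'(x))}=g(x)/B(x)$ built from Proposition~\ref{prop:Qminusx} and the expansion $1-Q'(x)=(1-x)\sum_{m\ge 0}b_m x^m$ with $b_m=\sum_{k\ge m+2}kq_k$, the same coefficient limit $c_m/b_m\to 1-\Lambda$, and the same Abelian passage exploiting that the infinite second moment forces $B(x)\to\infty$ as $x\to 1-$. The only cosmetic difference is that you carry out the Abelian step via one triangle inequality $|g(x)-(1-\Lambda)B(x)|\le P_N(x)+\epsilon B(x)$, while the paper splits it into matching $\liminf$/$\limsup$ bounds after discarding the first $K$ terms of both series; the content is identical.
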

\begin{proof}
For $x \in (0,1)$,
$${1-Q'(x) \over 1-x}={Q'(1)-Q'(x) \over 1-x}=\sum\limits_{m=0}^\infty \!mq_m {1-x^{m-1} \over 1-x}
=\sum\limits_{m=0}^\infty \sum\limits_{k=0}^{m-2} \!mq_m x^k=\sum\limits_{k=0}^\infty b_k x^k,$$ 
where
\be\label{eqn:bk}
b_k=\sum\limits_{m=k+2}^\infty mq_m .
\ee
Recall that \eqref{eqn:QminusxCrit} shows that $g(x)=\sum\limits_{k=0}^\infty a_k x^k$, where 
\be\label{eqn:ak}
a_k= \sum\limits_{m=k+2}^\infty \!(m-k-1)q_m=b_k-c_k 
\quad\text{ with }\quad c_k=(k+1)\!\!\sum\limits_{m=k+2}^\infty \!q_m 
\ee
and $b_k$ as defined in \eqref{eqn:bk}. 
Equations \eqref{eqn:bk} and \eqref{eqn:ak} yield
\be\label{eqn:Qx1xSums}
{Q(x)-x \over (1-x)(1-Q'(x))}={(1-x)g(x) \over 1-Q'(x)}={\sum\limits_{k=0}^\infty a_k x^k \over ~\sum\limits_{k=0}^\infty b_k x^k}=1-{\sum\limits_{k=0}^\infty c_k x^k \over ~\sum\limits_{k=0}^\infty b_k x^k}.
\ee
The infinite second moment condition implies 
$\lim\limits_{x \rightarrow 1-}\sum\limits_{k=0}^\infty b_k x^k=\infty$ and $\lim\limits_{x \rightarrow 1-}\sum\limits_{k=0}^\infty c_k x^k=\infty$.
As \eqref{eqn:RegCondLim} postulates that $\,\lim\limits_{k \rightarrow \infty}(c_k/b_k)=\Lambda$, 
for a given small value of $\epsilon>0$, there exists $K \in \mathbb{N}$ such that $\,|c_k/b_k-\Lambda|<\epsilon$, $\forall k\geq K$.
Thus
$$\liminf\limits_{x \rightarrow 1-}{\sum\limits_{k=0}^\infty c_k x^k \over ~\sum\limits_{k=0}^\infty b_k x^k}
=\liminf\limits_{x \rightarrow 1-}{\sum\limits_{k=K}^\infty c_k x^k \over ~\sum\limits_{k=K}^\infty b_k x^k} \geq \Lambda-\epsilon$$
and
$$\limsup\limits_{x \rightarrow 1-}{\sum\limits_{k=0}^\infty c_k x^k \over ~\sum\limits_{k=0}^\infty b_k x^k}
=\limsup\limits_{x \rightarrow 1-}{\sum\limits_{k=K}^\infty c_k x^k \over ~\sum\limits_{k=K}^\infty b_k x^k} \leq \Lambda+\epsilon.$$
Consequently, 
$$\lim\limits_{x \rightarrow 1-}{\sum\limits_{k=0}^\infty c_k x^k \over ~\sum\limits_{k=0}^\infty b_k x^k}=\Lambda.$$
Hence, the limit in \eqref{eqn:RegAsmQ} exists, and \eqref{eqn:Qx1xSums} implies 
$$S'(1)=1-\lim\limits_{x \rightarrow 1-}{Q(x)-x \over (1-x)\big(1-Q'(x)\big)}
=\lim\limits_{x \rightarrow 1-}{\sum\limits_{k=0}^\infty c_k x^k \over ~\sum\limits_{k=0}^\infty b_k x^k~}=\Lambda.$$
\end{proof}

\medskip
\begin{cor}[{{\bf Zipf distribution}}]\label{cor:Zipf}
Consider a critical Galton-Watson process $\mathcal{GW}(\{q_k\})$ with $q_1=0$ and offspring distribution $\{q_k\}$ of Zipf type:
\be\label{eqn:ZipfTail}
q_k \sim Ck^{-(\alpha+1)} \quad \text{ with }~\alpha \in (1,2]~\text{ and }~C>0.
\ee 
Then Assumption \ref{asm:reg} is satisfied,
\be\label{eqn:ZipfTailS1g}
S'(1)={\alpha-1 \over \alpha} \quad \text{ and }\quad
L=\lim\limits_{x \rightarrow 1-}\left({\ln{g(x)} \over -\ln(1-x)}\right)=2-\alpha.
\ee 
\end{cor}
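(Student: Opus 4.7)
\smallskip
\noindent\textbf{Proof proposal for Cor.~\ref{cor:Zipf}.} The plan is to route everything through Lem.~\ref{lem:RegCond} and Lem.~\ref{lem:dSof1}: compute the ratio $\Lambda$ of \eqref{eqn:RegCondLim} directly from the Zipf tail to identify $S'(1)$, then read off $L$ from the formula $S'(1)=(1-L)/(2-L)$. I first need to verify that the infinite second moment hypothesis of Lem.~\ref{lem:RegCond} is in force: for $\alpha\in(1,2]$ and $q_k\sim Ck^{-(\alpha+1)}$ the terms $k^2q_k\sim Ck^{1-\alpha}$ are non-summable, so $\sum k^2 q_k=\infty$, which rules out the finite second moment case of Lem.~\ref{lem:2ndMomentAsm} and places us squarely in the regime of Lem.~\ref{lem:RegCond}.

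The next step is the standard Karamata-type asymptotic for power-law tails. From $q_k\sim Ck^{-(\alpha+1)}$ I would obtain
\[
\sum_{m=k}^\infty q_m \;\sim\; \frac{C}{\alpha}\,k^{-\alpha},\qquad
\sum_{m=k}^\infty m\,q_m \;\sim\; \frac{C}{\alpha-1}\,k^{1-\alpha}\qquad (k\to\infty),
\]
either by direct comparison with the integrals of $x^{-(\alpha+1)}$ and $x^{-\alpha}$, or by Abel/Karamata summation using the slowly varying correction to $Ck^{-(\alpha+1)}$. Dividing and multiplying by $k$ then yields
\[
\Lambda \;=\; \lim_{k\to\infty}\,\frac{k\sum_{m=k}^\infty q_m}{\sum_{m=k}^\infty m\,q_m}
\;=\; \frac{\alpha-1}{\alpha}.
\]
An invocation of Lem.~\ref{lem:RegCond} then gives Assumption~\ref{asm:reg} with $S'(1)=\Lambda=(\alpha-1)/\alpha$, which is the first identity in \eqref{eqn:ZipfTailS1g}.

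For the second identity, Lem.~\ref{lem:dSof1} provides the closed-form relation $S'(1)=(1-L)/(2-L)$ (i.e.\ $L=2-(1-S'(1))^{-1}$) as soon as Assumption~\ref{asm:reg} is known. Substituting $S'(1)=(\alpha-1)/\alpha$ gives $1-S'(1)=1/\alpha$ and hence $L=2-\alpha$, as required.

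The only real obstacle is the rigorous handling of the tail asymptotics: one must be a bit careful at the endpoint $\alpha=2$, where the sums in question diverge only logarithmically-free-of-slowly-varying corrections, and the $\sim$ notation in \eqref{eqn:ZipfTail} has to be strong enough to conclude $c_k/b_k\to\Lambda$ in the notation of the proof of Lem.~\ref{lem:RegCond}. This is, however, standard: splitting the tail sums into a head $m<K$ (bounded) and a tail $m\ge K$ (where $q_m/(Cm^{-(\alpha+1)})$ is within any prescribed $\epsilon$ of $1$) and sandwiching by explicit integrals reduces the claim to the elementary evaluations $\int_k^\infty x^{-(\alpha+1)}\,dx=k^{-\alpha}/\alpha$ and $\int_k^\infty x^{-\alpha}\,dx=k^{1-\alpha}/(\alpha-1)$, after which the ratio $\Lambda=(\alpha-1)/\alpha$ emerges without any additional work.
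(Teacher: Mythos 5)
Your proposal is correct and follows essentially the same route as the paper: compute the tail asymptotics $\sum_{m\ge k} q_m \sim \frac{C}{\alpha}k^{-\alpha}$ and $\sum_{m\ge k} mq_m \sim \frac{C}{\alpha-1}k^{1-\alpha}$, conclude $\Lambda=(\alpha-1)/\alpha$, invoke Lem.~\ref{lem:RegCond} for Assumption~\ref{asm:reg} and $S'(1)$, and then Lem.~\ref{lem:dSof1} for $L=2-\alpha$. Your explicit check that the second moment is infinite for all $\alpha\in(1,2]$ (so that Lem.~\ref{lem:RegCond} applies) is a detail the paper leaves implicit, and is a welcome addition.
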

\begin{proof}
Suppose $q_k=Ck^{-(\alpha+1)}\big(1+o(1)\big)$. Then, 
$$\sum\limits_{m=k}^\infty mq_m =C{k^{1-\alpha} \over \alpha-1}\big(1+o(1)\big)
\quad \text{ and }\quad
\sum\limits_{m=k}^\infty q_m=C{k^{-\alpha} \over \alpha}\big(1+o(1)\big).$$
Hence, the limit $\Lambda$ defined in \eqref{eqn:RegCondLim} exists and is equal to
$$\Lambda=\lim\limits_{k \rightarrow \infty}{~k\sum\limits_{m=k}^\infty \!q_m ~ \over \sum\limits_{m=k}^\infty \!mq_m}={\alpha-1 \over \alpha}$$
Consequently, Lem.~\ref{lem:RegCond} implies Assumption \ref{asm:reg} and $\,S'(1)={\alpha-1 \over \alpha}$.
Finally, by Lem.~\ref{lem:dSof1} we have 
\[L=2-{1 \over 1-S'(1)}=2-\alpha.\]
\end{proof}

\begin{ex}[{\bf Infinite second moment, $L=0$}]\label{ex:k3Zipf}
Consider a critical Galton-Watson process $\mathcal{GW}(\{q_k\})$ with $q_0={2 \over 3}$, $q_1=0$, and 
$$q_k={4/3 \over k(k^2-1)} \qquad  (k\geq 2).$$
Observe that the offspring distribution $q_k$ is of Zipf type \eqref{eqn:ZipfTail} with $\alpha=2$.
This offspring distribution has infinite second moment.
Here,
$$Q(x)-x=(1-x)^2\,g(x) \quad \text{ with }\quad g(x)=-{2/3 \over x}\ln(1-x),$$
and therefore, the limit in \eqref{eqn:RegAsmQ} exists and is equal to
$$\lim\limits_{x \rightarrow 1-}{Q(x)-x \over (1-x)\big(1-Q'(x)\big)}=\lim\limits_{x \rightarrow 1-}{\ln(1-x) \over 2\ln(1-x)+{1 \over x}}={1 \over 2}.$$
Hence, Assumption \ref{asm:reg} is satisfied with $\,S'(1)=1-{1 \over 2}={1 \over 2}$.
On the other hand,
$$L=\lim\limits_{x \rightarrow 1-}\left({\ln{g(x)} \over -\ln(1-x)}\right)=\lim\limits_{x \rightarrow 1-}\left({\ln\left(-\ln(1-x)\right) \over -\ln(1-x)}\right)=0,$$
which is consistent with Lem.~\ref{lem:2plusMoment}.
We also see that $S'(1)={1-L \over 2-L}={\alpha-1 \over \alpha}$, giving an example for statements in Lem.~\ref{lem:dSof1} and Cor.~\ref{cor:Zipf}.
\end{ex}

\subsection{Tokunaga coefficients in recursive form}

Here we derive a recursive expression for the Tokunaga coefficients of a Galton-Watson measure
in the form $T_{i,j} = \pi_i f(\sigma_{j-2},\pi_{j-1},\pi_j)$.
The recursive nature of this representation is connected to the recursive 
expression \eqref{eqn:GWtreePIj} for $\pi_i$ of Lem.~\ref{lem:GWtreePIj}.

\begin{lem}[{\bf Tokunaga coefficients}]\label{lem:GWtreeTij}
Consider a Galton-Watson measure $\mathcal{GW}(\{q_k\})$ with $q_1=0$.
Assume criticality or subcriticality, i.e., $\sum\limits_{k=0}^\infty kq_k \leq 1$. Then, for all $1\leq i < j-1$, we have
\be\label{eqn:GWtreeTij}
T_{i,j}=\pi_i{Q'(\sigma_{j-1})-Q'(\sigma_{j-2})-\pi_{j-1}Q''(\sigma_{j-2}) \over Q(\sigma_{j-1})-Q(\sigma_{j-2})-\pi_{j-1}Q'(\sigma_{j-2})} +T_{i,j}^o, 
\ee 
and for $1\leq i=j-1$,
\be\label{eqn:GWtreeTj1j}
T_{j-1,j}={\pi_{j-1}Q'(\sigma_{j-1})\!+\!\pi_{j-1}Q'(\sigma_{j-2})\!-\!2Q(\sigma_{j-1})\!+\!2Q(\sigma_{j-2}) \over Q(\sigma_{j-1})-Q(\sigma_{j-2})-\pi_{j-1}Q'(\sigma_{j-2})}
+T_{j-1,j}^o,
\ee 
where $\,T_{i,j}^o=\pi_i {Q''(\sigma_{j-1}) \over 1-Q'(\sigma_{j-1})}\,$ is the expected number of offsprings of order $i$ descendant to all regular (non-terminal) vertices of order $j$.
\end{lem}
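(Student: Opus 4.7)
My plan is to split the expected number of side branches of Tokunaga index $\{i,j\}$ per branch of order $j$ into the contribution from the regular (non-terminal) vertices of the branch, which will reproduce $T_{i,j}^o$, and the contribution from the single terminal vertex, which produces the remaining summand in \eqref{eqn:GWtreeTij} and, after subtracting the two principle branches, in \eqref{eqn:GWtreeTj1j}.

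For the regular-vertex contribution I would first observe, as in the proof of Lem.~\ref{lem:GWtreePIj} (formula \eqref{eqn:RegVertexQp}), that the probability for a non-root vertex to be a regular vertex of order $j$ equals $\pi_j Q'(\sigma_{j-1})$, while its probability of being a terminal vertex of order $j$ is $\pi_j\bigl(1-Q'(\sigma_{j-1})\bigr)$. Since each branch of order $j$ carries exactly one terminal vertex, the expected number of regular vertices per branch equals $Q'(\sigma_{j-1})/\bigl(1-Q'(\sigma_{j-1})\bigr)$. Conditioning on a regular vertex of order $j$ with $m$ offsprings, one of them is of order $j$ and the other $m-1$ are i.i.d.\ with $P(\text{order}=k)=\pi_k/\sigma_{j-1}$ for $k<j$, which yields
\[
\frac{1}{Q'(\sigma_{j-1})}\sum_{m\geq 2}q_m\, m(m-1)\sigma_{j-1}^{m-1}\cdot\frac{\pi_i}{\sigma_{j-1}}=\frac{\pi_i\, Q''(\sigma_{j-1})}{Q'(\sigma_{j-1})}
\]
as the expected number of order-$i$ offsprings at a single regular vertex. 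Multiplying by the expected number of regular vertices per branch yields $T_{i,j}^o=\pi_i Q''(\sigma_{j-1})/\bigl(1-Q'(\sigma_{j-1})\bigr)$, as claimed.

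For the terminal-vertex contribution I would enumerate over the number $\ell\geq 2$ of order-$(j-1)$ offsprings at the terminal: the joint probability that a non-root vertex has $m$ offsprings with exactly $\ell\geq 2$ of order $j-1$ and the remaining $m-\ell$ of order $\leq j-2$ is $q_m\binom{m}{\ell}\pi_{j-1}^\ell\sigma_{j-2}^{m-\ell}$. For $i<j-1$ the conditional expected number of order-$i$ offsprings in such a configuration is $(m-\ell)\pi_i/\sigma_{j-2}$; using $\binom{m}{\ell}(m-\ell)=m\binom{m-1}{\ell}$ and peeling off the $\ell=0,1$ tails that violate $\ell\geq 2$, the double sum collapses to $\pi_i\bigl[Q'(\sigma_{j-1})-Q'(\sigma_{j-2})-\pi_{j-1}Q''(\sigma_{j-2})\bigr]$. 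Dividing by the probability that the vertex is terminal of order $j$, namely $Q(\sigma_{j-1})-Q(\sigma_{j-2})-\pi_{j-1}Q'(\sigma_{j-2})$, yields exactly the second summand of \eqref{eqn:GWtreeTij}. For $i=j-1$ the count at the terminal configuration is $\ell$, and the analogous computation via $\binom{m}{\ell}\ell=m\binom{m-1}{\ell-1}$ produces the per-vertex weight $\pi_{j-1}\bigl[Q'(\sigma_{j-1})-Q'(\sigma_{j-2})\bigr]$; subtracting $2\bigl[Q(\sigma_{j-1})-Q(\sigma_{j-2})-\pi_{j-1}Q'(\sigma_{j-2})\bigr]$ to discount the two principle branches and dividing by the same denominator simplifies to the numerator of \eqref{eqn:GWtreeTj1j}.

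The main obstacle will be the combinatorial bookkeeping at the terminal vertex: the restriction $\ell\geq 2$ forces one to subtract off the $\ell=0$ and $\ell=1$ tails before invoking the binomial identity $\sum_{\ell=0}^m\binom{m}{\ell}a^\ell b^{m-\ell}=(a+b)^m$ and the differentiation rules $\sum_m q_m m x^{m-1}=Q'(x)$, $\sum_m q_m m(m-1)x^{m-2}=Q''(x)$, evaluated consistently at both $x=\sigma_{j-1}=\pi_{j-1}+\sigma_{j-2}$ and $x=\sigma_{j-2}$. Once these simplifications are carried out and the regular- and terminal-vertex contributions are added, the formulas \eqref{eqn:GWtreeTij} and \eqref{eqn:GWtreeTj1j} follow directly.
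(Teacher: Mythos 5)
Your proposal is correct and takes essentially the same route as the paper's proof: the identical decomposition of $T_{i,j}$ into a terminal-vertex contribution plus $T_{i,j}^o=V_j^o\,M_{i,j}^o$, with the same binomial identities handling the $\ell\ge 2$ tail corrections at the terminal vertex, the $(\ell-2)$ discount for the two principle branches in the $i=j-1$ case, and the same per-regular-vertex count $\pi_i Q''(\sigma_{j-1})/Q'(\sigma_{j-1})$. The only cosmetic difference is that you derive the expected number of regular vertices per branch, $Q'(\sigma_{j-1})/\bigl(1-Q'(\sigma_{j-1})\bigr)$, from a one-terminal-vertex-per-branch density ratio, whereas the paper computes it as the mean of a geometric distribution in \eqref{eqn:regVj} --- equivalent arguments yielding the same formula.
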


\noindent
Note that \eqref{eqn:GWtreeTij} can be rewritten as
$$T_{i,j}=\pi_i {d \over dx}\ln\left({Q(x+\pi_{j-1})-Q(x)-\pi_{j-1}Q'(x) \over 1-Q'(x+\pi_{j-1})}\right)\Bigg|_{x=\sigma_{j-2}}.$$
\begin{proof}
For $i\leq j-2$, let $M_{i,j}^{\rm term}$ denote the expected number of descendants
of order $i$ of a terminal vertex of order $j$, i.e., the expected number of side branches of Tokunaga index $\{i,j\}$; see Remark \ref{rem:side-branching}.
For $d \in \mathbb{N}$, consider all vertices in generation $d$.
The probability that a vertex is a terminal vertex in a branch of order $j$ is
$$\sum\limits_{m=2}^\infty q_m \sum\limits_{\ell=2}^m \binom{m}{\ell} \pi_{j-1}^\ell \sigma_{j-2}^{m-\ell}=Q(\sigma_{j-1})-Q(\sigma_{j-2})-\pi_{j-1}Q'(\sigma_{j-2}),$$
where $m\geq 2$ is its branching number (i.e., the number of descendants) and $\ell \geq 2$ is the number of descendants of order $j-1$.

\medskip
\noindent
Recall that $\sum\limits_{m=0}^k m \binom{k}{m} a^m b^{k-m}=ka(a+b)^{k-1}$. 
The expected number of offsprings of order $i$ descendant to a vertex conditioned 
on having a total of $m\geq 2$ offsprings, of which $\ell \geq 2$ are of order 
$j-1$ and $m-\ell$ are of order smaller than $j-1$, is
$${1 \over \sigma_{j-2}^{m-\ell}}\sum\limits_{k=0}^{m-\ell}k\binom{m-\ell}{k} \pi_i^k \big(\sigma_{j-2}-\pi_i\big)^{m-\ell-k}=\pi_i{m-\ell \over \sigma_{j-2}}.$$
Thus, for $i\leq j-2$,
\begin{align}\label{eqn:termNij}
M_{i,j}^{\rm term}&={\sum\limits_{m=2}^\infty q_m \sum\limits_{\ell=2}^m \binom{m}{\ell} \pi_{j-1}^\ell \sum\limits_{k=0}^{m-\ell}k\binom{m-\ell}{k} \pi_i^k \big(\sigma_{j-2}-\pi_i\big)^{m-\ell-k}
\over \sum\limits_{m=2}^\infty q_m \sum\limits_{\ell=2}^m \binom{m}{\ell} \pi_{j-1}^\ell \sigma_{j-2}^{m-\ell}} \nonumber \\
&\qquad\qquad ={\pi_i \sum\limits_{m=2}^\infty q_m \sum\limits_{\ell=2}^m (m-\ell) \binom{m}{\ell} \pi_{j-1}^\ell \sigma_{j-2}^{m-\ell-1}
\over Q(\sigma_{j-1})-Q(\sigma_{j-2})-\pi_{j-1}Q'(\sigma_{j-2})} \nonumber \\
&\qquad\qquad ={\pi_i \sum\limits_{m=2}^\infty q_m \left(m\sigma_{j-1}^{m-1}-m(m-1)\pi_{j-1}\sigma_{j-2}^{m-2}-m\sigma_{j-2}^{m-1} \right)
\over Q(\sigma_{j-1})-Q(\sigma_{j-2})-\pi_{j-1}Q'(\sigma_{j-2})} \nonumber \\
&\qquad\qquad =\pi_i{Q'(\sigma_{j-1})-Q'(\sigma_{j-2})-\pi_{j-1}Q''(\sigma_{j-2}) \over Q(\sigma_{j-1})-Q(\sigma_{j-2})-\pi_{j-1}Q'(\sigma_{j-2})}.
\end{align}
Next, for $i=j-1$, let $M_{j-1,j}^{\rm term}$ denote the expected number of order $j-1$ side-branches adjacent to a terminal vertex of a branch of order $j$. 
The expected number of order $j-1$ offsprings of a vertex conditioned on being the terminal vertex in a branch of order $j$ with 
a total of $m\geq 2$ offsprings is
$${1 \over \sigma_{j-2}^{m-\ell}}\sum\limits_{\ell=2}^m \ell \binom{m}{\ell} \pi_{j-1}^\ell \sigma_{j-2}^{m-\ell},$$
where $\ell \geq 2$ counts the offspring of order $j-1$, and the rest $m-\ell$ represent the offsprings of order smaller than $j-1$.
Following Remark \ref{rem:side-branching}, we subtract two principal branches from the number of order $j-1$ offsprings. Consequently,
the expected number of order $j-1$ side branches adjacent to a vertex conditioned on being the terminal vertex in a branch of order $j$ with 
a total of $m\geq 2$ offsprings is equal to
$${1 \over \sigma_{j-2}^{m-\ell}}\sum\limits_{\ell=2}^m (\ell-2) \binom{m}{\ell} \pi_{j-1}^\ell \sigma_{j-2}^{m-\ell}.$$
Here, of $\ell \geq 2$ offspring of order $j-1$, two are principle branches and $\ell-2$ are side branches.
Hence, we have
\begin{align}\label{eqn:termNj1j}
M_{j-1,j}^{\rm term}&={\sum\limits_{m=2}^\infty q_m \sum\limits_{\ell=2}^m (\ell-2) \binom{m}{\ell} \pi_{j-1}^\ell \sigma_{j-2}^{m-\ell}
\over Q(\sigma_{j-1})-Q(\sigma_{j-2})-\pi_{j-1}Q'(\sigma_{j-2})} \nonumber \\
&\qquad\qquad ={\sum\limits_{m=2}^\infty q_m \left(m\pi_{j-1}\sigma_{j-1}^{m-1}+m\pi_{j-1}\sigma_{j-2}^{m-1}-2\sigma_{j-1}^m +2\sigma_{j-2}^m\right)
\over Q(\sigma_{j-1})-Q(\sigma_{j-2})-\pi_{j-1}Q'(\sigma_{j-2})} \nonumber \\
&\qquad\qquad={\pi_{j-1}Q'(\sigma_{j-1})+\pi_{j-1}Q'(\sigma_{j-2})-2Q(\sigma_{j-1})+2Q(\sigma_{j-2}) \over Q(\sigma_{j-1})-Q(\sigma_{j-2})-\pi_{j-1}Q'(\sigma_{j-2})}.
\end{align}

\medskip
\noindent
The expected number $V_j^o$ of regular (non-terminal) vertices in a branch of order $j$ is computed as follows:
\be\label{eqn:regVj}
V_j^o={\sum\limits_{r=0}^\infty r \left(\sum\limits_{k=2}^\infty q_k k\sigma_{j-1}^{k-1}\right)^r
\over \sum\limits_{r=0}^\infty  \left(\sum\limits_{k=2}^\infty q_k k\sigma_{j-1}^{k-1}\right)^r}
={Q'(\sigma_{j-1}) \over 1-Q'(\sigma_{j-1})},
\ee
where, following \eqref{eqn:RegVertexQp}, the probability of a vertex being a regular vertex in a branch of order $j$, conditioned on it being of order $j$,  
equals 
$$\sum\limits_{k=2}^\infty q_k k\sigma_{j-1}^{k-1}.$$

\medskip
\noindent
Finally, the expected number $M_{i,j}^o$ of order $i$ offsprings (and therefore, side branches of Tokunaga index $\{i,j\}$) 
in a regular (non-terminal) vertex on a branch of order $j$ is
 \begin{align}\label{eqn:regNij}
M_{i,j}^o &={1 \over \sum\limits_{k=2}^\infty q_k k\sigma_{j-1}^{k-1}} \sum\limits_{k=0}^\infty  q_k k\!\sum\limits_{s=0}^{k-1}s\binom{k-1}{s}\pi_i^s\big(\sigma_{j-1}-\pi_i\big)^{k-1-s} \nonumber \\
&={1 \over Q'(\sigma_{j-1})}\pi_i \sum\limits_{k=2}^\infty  q_k k(k-1)\sigma_{j-1}^{k-2} =\pi_i {Q''(\sigma_{j-1}) \over Q'(\sigma_{j-1})}
\end{align}
for $1\leq i < j$. Here, $k$ counts the total number of offsprings, of which  we have $k$ choices for the offspring of order $j$. Of the remaining $k-1$ ofsprings, we select $s$ offsprings of order $i$ and $k-1-s$ of order other than $i$, but less than $j$. There are $\binom{k-1}{s}$ such choices, 
with probability of $\pi_i^s\big(\sigma_{j-1}-\pi_i\big)^{k-1-s}$ for each such outcome.

\medskip
\noindent
The statement of the lemma follows from equations \eqref{eqn:termNij}, \eqref{eqn:termNj1j}, \eqref{eqn:regVj}, \eqref{eqn:regNij} as
$T_{i,j}=M_{i,j}^{term}+T_{i,j}^o$ with $T_{i,j}^o=V_j^o M_{i,j}^o$ by Wald's equation.
\end{proof}

\begin{ex}[{\bf Critical binary Galton-Watson tree}]\label{cor:ac12inGW}
Consider the critical binary Galton-Watson distribution $\mathcal{GW}(q_0\!=q_2\!=\!1/2)$.
We have
\[Q(z)=\frac{1+z^2}{2},\quad S(z) = \frac{1+z}{2}, \quad\text{and}\quad g(z)=1/2.\]
Corollary \ref{cor:GWtreePIj} yields
$\sigma_j=S(\sigma_{j-1})$ with $\sigma_0=0$, which implies by induction 
$\sigma_j=1-2^{-j}$ and $\quad \pi_j=2^{-j}$ for $j\ge 1.$
Equations \eqref{eqn:GWtreeTij} and \eqref{eqn:GWtreeTj1j} give
\[T_{i,j}=T^o_{i,j}={\pi_i \over 1-\sigma_{j-1}}=2^{j-i-1}\quad\text{for all}\quad 1\leq i < j,\]
which implies the Toeplitz property (Def.~\ref{Tsi}) and Tokunaga self-similarity (Def.~\ref{ss2}) with $(a,c) = (1,2)$ and 
$T_k = 2^{k-1}$.
\end{ex}

\medskip
\begin{lem}[{{\bf Toeplitz implies criticality}}]\label{lem:q0}
Consider a subcritical or critical Galton-Watson measure 
$\mathcal{GW}(\{q_k\})$ with $q_1=0$ that
satisfies Assumption \ref{asm:reg}.
If the Toeplitz property (Def.~\ref{Tsi}) is satisfied, then the measure
is either critical or $q_0=1$, the order distribution is geometric with $\pi_k = q_0(1-q_0)^{k-1}$,
and  $q_0=1-S'(1)$.
\end{lem}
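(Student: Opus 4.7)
The plan is to use the factorization of the Tokunaga coefficients in Lem.~\ref{lem:GWtreeTij} to extract a geometric form for the order distribution $\{\pi_k\}$ from the Toeplitz property, and then match this explicit geometry against the recursion $\sigma_j = S(\sigma_{j-1})$ of Cor.~\ref{cor:GWtreePIj} to identify $S'(1) = 1 - q_0$. The subcritical claim will then follow from Remark \ref{rem:subcritical}.

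First, dismiss the degenerate case $q_0 = 1$: the tree is then a single leaf almost surely, so $\pi_1 = 1$ and $\pi_k = 0$ for $k \geq 2$; Remark \ref{rem:subcritical} gives $S'(1) = 0$, matching $q_0 = 1 - S'(1)$ and the geometric formula (with the convention $0^0 = 1$). So assume $q_0 < 1$, in which case a short induction through \eqref{eqn:GWtreePIj} shows $\pi_k > 0$ for every $k \geq 1$. Rewrite \eqref{eqn:GWtreeTij} as
\[T_{i,j} = \pi_i\,f(j), \qquad 1 \leq i \leq j-2,\]
where $f(j)$ collects the $j$-dependent factor from Lem.~\ref{lem:GWtreeTij}. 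Observe that $f(j) > 0$: indeed, $f(j) \geq Q''(\sigma_{j-1})/(1-Q'(\sigma_{j-1}))$, and both $Q''(\sigma_{j-1}) > 0$ (because $q_1=0$ together with $q_0<1$ forces some $q_k > 0$ for $k \geq 2$) and $1 - Q'(\sigma_{j-1}) > 0$ (because $\sigma_{j-1} < 1$ and $Q'(1) \leq 1$). Applying Toeplitz with $j = i+3$ at both admissible indices $i' = i$ and $i' = i+1$,
\[\pi_i\, f(i+3) = T_3, \qquad \pi_{i+1}\, f(i+3) = T_2,\]
whence $\pi_i/\pi_{i+1} = T_3/T_2$, a ratio independent of $i$. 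Thus $\pi_k = q_0 \lambda^{k-1}$ for some $\lambda \in (0,1)$. Since every subcritical or critical Galton-Watson tree is almost surely finite, $\sum_k \pi_k = 1$, forcing $\lambda = 1 - q_0$ and $\pi_k = q_0(1-q_0)^{k-1}$.

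Consequently $\sigma_j = 1 - (1-q_0)^j$, and Cor.~\ref{cor:GWtreePIj} yields
\[\frac{1-S(\sigma_{j-1})}{1-\sigma_{j-1}} = \frac{(1-q_0)^j}{(1-q_0)^{j-1}} = 1-q_0 \qquad \text{for every } j \geq 1.\]
Because $q_0 \geq 1/2 > 0$, we have $\sigma_{j-1} \uparrow 1$, so Assumption \ref{asm:reg} forces $S'(1) = 1 - q_0$, i.e., $q_0 = 1 - S'(1)$. For the subcritical branch, Remark \ref{rem:subcritical} gives $S'(1) = 0$, hence $q_0 = 1$, contradicting our standing assumption $q_0 < 1$; thus a Toeplitz subcritical measure must be the degenerate $q_0 = 1$ measure, completing the dichotomy.

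The main obstacle I anticipate is the first algebraic step: producing the factorization $T_{i,j} = \pi_i f(j)$ from \eqref{eqn:GWtreeTij} and verifying strict positivity of $f(j)$, $\pi_k$, $T_2$ and $T_3$ so that division and ratio comparison are legitimate. Once this factorization is in place the Toeplitz relation collapses into a simple multiplicative identity, and everything downstream — the geometric form of $\pi_k$, then the identification $S'(1) = 1 - q_0$ via the iteration $\sigma_j = S(\sigma_{j-1})$, then the subcritical dichotomy via Remark \ref{rem:subcritical} — is a direct computation.
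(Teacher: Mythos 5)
Your proof is correct and takes essentially the same route as the paper's: the factorization $T_{i,j}=\pi_i f(j)$ from Lem.~\ref{lem:GWtreeTij} forces $\pi_i/\pi_{i+1}$ to be constant, normalization $\sum_k \pi_k=1$ gives the geometric law with ratio $1-q_0$, Assumption~\ref{asm:reg} evaluated along $\sigma_{j}\uparrow 1$ yields $S'(1)=1-q_0$, and Rem.~\ref{rem:subcritical} settles the subcritical dichotomy, exactly as in the paper. Your explicit positivity checks for $f(j)$, $T_2$, $T_3$ and the exact difference-quotient identification of $S'(1)$ (versus the paper's expansion $S(x)=1+S'(1)(x-1)+o(1-x)$) are minor refinements, not a different method.
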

\begin{proof}
The Toeplitz property implies the existence of the Tokunaga sequence $\{T_k\}_{k \in\mathbb{N}}$.

In the trivial case of $q_0=1$, we have $T_k=0$ for any $k\ge1$, $Q(z)=S(z)=1$ so 
$S'(1)=0 = 1-q_0$, and $\pi_k = \delta_{1k}$. This establishes the statement.

Suppose that $q_0<1$.
Equation \eqref{eqn:GWtreeTij} shows that there is a scalar $c>0$ such that
$${T_{k+1} \over T_k}={\pi_i \over \pi_{i+1}}=c \qquad \forall k \geq 2, \, i \geq 1.$$
Thus, as $\pi_1=q_0$, we have $\pi_j=q_0 c^{1-j}$ and since $\sum_j \pi_j=1$ then $c=(1-q_0)^{-1}$.

\medskip
\noindent
Next, observe that since $S(x)=S(1)+S'(1)(x-1)+o(1-x)$ and $S(1)=1$, we have
$$1-q_0={\pi_{i+1} \over \pi_i}={S(\sigma_i)-S(\sigma_{i-1}) \over  \pi_i}
={S'(1)(\sigma_i-\sigma_{i-1})+o(1-\sigma_{i-1}) \over  \pi_i} \rightarrow S'(1)~\text{ as }i \rightarrow \infty$$
that leads to
\begin{equation}\label{eqn:Sof1viaq0}
q_0=1-S'(1).
\end{equation}
The criticality follows from the constraint $q_0<1$, since in the subcritical case we have $S'(1)=0$ (see Rem.~\ref{rem:subcritical}).
\end{proof}

The following statement gives an alternative proof to one of the main results of Burd et al.~\cite{BWW00} using the framework of the present study.
\begin{cor}\label{lem:GWtreeTkGWhalf}
Consider a subcritical or critical offspring distribution $\{q_k\}$ with  
$q_1=0$ and a finite second moment, $\sum_{k=1}^{\infty}k^2q_k<\infty$.
The measure $\mathcal{GW}(\{q_k\})$ satisfies the Toeplitz property (Def.~\ref{Tsi})
if and only if it is the critical binary Galton-Watson measure, $q_0=q_2={1 \over 2}$.
\end{cor}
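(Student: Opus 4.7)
The ``if'' direction is exactly the content of Example \ref{cor:ac12inGW}: for $\mathcal{GW}(q_0 = q_2 = 1/2)$ one computes $T_{i,j} = 2^{j-i-1}$, so the Toeplitz property holds with $T_k = 2^{k-1}$. For the converse, my strategy is to recover $q_0$ from $S'(1)$ using the lemmas already in place, and then to read off the remaining $q_k$ from the criticality identity.

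Under the finite second moment hypothesis, Lemma \ref{lem:2ndMomentAsm} (in the critical regime) and Remark \ref{rem:subcritical} (in the subcritical regime) jointly ensure that Assumption \ref{asm:reg} holds, with $S'(1) = 1/2$ in the critical case and $S'(1) = 0$ in the subcritical case. Assuming the Toeplitz property, Lemma \ref{lem:q0} then yields $q_0 = 1 - S'(1)$ together with the dichotomy ``the measure is critical, or $q_0 = 1$''. The subcritical possibility would force the degenerate deterministic case $q_0 = 1$, whose trees all have order one and therefore carry no non-trivial Tokunaga coefficients; discarding this degenerate case leaves criticality and $q_0 = 1/2$.

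It remains to extract $q_2 = 1/2$ and $q_k = 0$ for $k \geq 3$, and this is the easy step. Criticality gives $\sum_{k \geq 2} k q_k = 1$ (using $q_1 = 0$), while $q_0 = 1/2$ gives $\sum_{k \geq 2} q_k = 1/2$, so
\[\sum_{k=2}^{\infty}(k-2)q_k \;=\; 1 - 2 \cdot \tfrac{1}{2} \;=\; 0.\]
Every summand is non-negative, hence $q_k = 0$ for all $k \geq 3$ and consequently $q_2 = 1/2$. The main obstacle, if any, is the bookkeeping of cleanly disposing of the degenerate $q_0 = 1$ branch in the subcritical case; all of the analytic content is already packaged in Lemmas \ref{lem:2ndMomentAsm} and \ref{lem:q0}.
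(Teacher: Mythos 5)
Your proposal is correct and follows essentially the same route as the paper's own proof: both cite Example~\ref{cor:ac12inGW} for the ``if'' direction, combine Lemma~\ref{lem:2ndMomentAsm} with Lemma~\ref{lem:q0} to obtain criticality and $q_0=1-S'(1)=\tfrac{1}{2}$, and then use the first-moment identity $\sum_{k\ge 2}(k-2)q_k=0$ to force $q_2=\tfrac{1}{2}$ and $q_k=0$ for $k\ge 3$. Your explicit treatment of the subcritical branch via Remark~\ref{rem:subcritical} and the disposal of the degenerate $q_0=1$ case merely spell out what the paper's terser proof leaves implicit.
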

\begin{proof}
By Lem.~\ref{lem:2ndMomentAsm}, the finite second moment implies 
Assumption \ref{asm:reg}
with $q_0=1-S'(1)={1 \over 2}$.
Assume the Toeplitz property holds. 
Then the criticality follows from Lem.~\ref{lem:q0}.
The criticality with $q_0={1 \over 2}$ and $q_1=0$ yield 
$q_2={1 \over 2}$ as $$\sum\limits_{k=2}^\infty {k \over 2}q_k={1 \over 2}=1-q_0=\sum\limits_{k=2}^\infty q_k.$$
Conversely, the Toeplitz property for the critical binary Galton-Watson tree 
is established in Ex.~\ref{cor:ac12inGW}.
\end{proof}

%%%%%%%%%%%%%%%%
\subsection{Invariant Galton-Watson measures}
The following result was originally proved in \cite{BWW00}. 
We state and prove it here since the expression \eqref{eqn:genQone} will be used in
the proof of Thm.~\ref{thm:completeGW} below.
\begin{lem}[{{\bf Pruning Galton-Watson tree, \cite{BWW00}}}]
\label{lem:BWW00_1}
Consider a critical or subcritical Galton-Watson measure $\mu\equiv\mathcal{GW}(\{q_k\})$  with $q_1=0$ on $\cT^|$ with generating function $Q(z)$,
and the corresponding pushforward probability measure induced by the Horton pruning operator $\cR$,
$$\nu(T)=\mu \circ \cR^{-1}(T) = \mu \big(\cR^{-1}(T)\big).$$
Then, $\nu(T\,|T\not= \phi)$ is a Galton-Watson measure $\mathcal{GW}(\{q_k^{(1)}\})$ on $\cT^|$ with
offspring probabilities 
\be\label{eqn:q0par0}
q_0^{(1)}={Q(q_0)-q_0 \over (1-q_0)\big(1-Q'(q_0)\big)}, 
\ee
$q_1^{(1)}=0$, and 
\be\label{eqn:q0parm}
q_k^{(1)}={(1-q_0)^{k-1}Q^{(k)}(q_0) \over k!\big(1-Q'(q_0)\big)} \quad (k \geq 2),
\ee
and generating function
\be\label{eqn:genQone}
Q_1(z)=z+{Q\big(q_0+(1-q_0)z\big)-q_0 -z(1-q_0) \over (1-q_0)\big(1-Q'(q_0)\big)}.
\ee
Moreover, if $\mu(T)$ is critical, then so is  $\nu(T\,|T\not= \phi)$.
If $\mu(T)$ is subcritical, then the first moment is decreasing with pruning, i.e.,
$~\sum\limits_{k=2}^\infty kq_k^{(1)} < \sum\limits_{k=2}^\infty kq_k <1$.
\end{lem}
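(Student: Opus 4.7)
The essential point is that Horton pruning is a \emph{local} operation on subtrees, so the Markov branching structure of a Galton-Watson tree is preserved. Conditional on a non-leaf vertex $v$ in $T$ with offspring count $X_v\sim\{q_k\}$, the descendant subtrees at each offspring of $v$ are i.i.d.\ copies of $\mathcal{GW}(\{q_k\})$; applying $\cR$ to these independent copies yields conditionally independent pruned subtrees, which guarantees that the pushforward $\nu(\,\cdot\mid T\neq\phi)$ is again a Galton-Watson measure. It then suffices to identify its offspring distribution $\{q_k^{(1)}\}$ by tracking what happens to $v$ under one step of pruning.

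Each offspring of $v$ has order $1$ (is a leaf) with probability $q_0$ and survives leaf removal with probability $1-q_0$, independently; thus the number of non-leaf offspring of $v$ is $\mathrm{Binomial}(X_v, 1-q_0)$. After the subsequent series reduction, $v$ \emph{survives} in $\cR(T)$ iff its number of non-leaf offspring is $0$ (in which case $v$ becomes a leaf of $\cR(T)$) or $\geq 2$ (in which case $v$ remains a bifurcation), but not $1$. Summing the appropriate binomial weights against $\{q_k\}$, I would compute
\[
\mathbb{P}(v \text{ survives})=(1-q_0)(1-Q'(q_0)),\quad \mathbb{P}(v \text{ has }0\text{ offspring in }\cR(T))=Q(q_0)-q_0,
\]
and, for $k\ge 2$, $\mathbb{P}(v\text{ has } k \text{ offspring in }\cR(T))=\tfrac{(1-q_0)^k}{k!}Q^{(k)}(q_0)$, recognizing the last sum as a Taylor coefficient of $Q$ at $q_0$. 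Dividing by $\mathbb{P}(v\text{ survives})$ yields \eqref{eqn:q0par0} and \eqref{eqn:q0parm}, while $q_1^{(1)}=0$ is automatic from the series-reduction step.

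The generating function $Q_1(z)$ then drops out of the Taylor expansion $Q(q_0+(1-q_0)z)=\sum_{k\geq 0}\frac{(1-q_0)^k Q^{(k)}(q_0)}{k!}z^k$ by isolating the $k=0$ and $k=1$ contributions and rearranging to match \eqref{eqn:genQone}. Differentiating \eqref{eqn:genQone} gives the clean identity
\[
Q_1'(z)=1+\frac{Q'(q_0+(1-q_0)z)-1}{1-Q'(q_0)},
\]
so $Q_1'(1)=1+(Q'(1)-1)/(1-Q'(q_0))$. In the critical case $Q'(1)=1$ this immediately yields $Q_1'(1)=1$. In the subcritical case, rewriting $Q'(1)-Q_1'(1)=(1-Q'(1))\,Q'(q_0)/(1-Q'(q_0))$ shows this difference is strictly positive because $Q'(q_0)>0$ (granted by $q_0\in(0,1)$ together with the existence of some $q_k>0$ for $k\geq 2$, which follows from $q_1=0$ and $q_0<1$).

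The main conceptual obstacle is the first step: asserting that the pushforward is genuinely a Galton-Watson measure rather than merely computing the one-vertex offspring statistics. All subsequent work reduces to routine binomial and Taylor bookkeeping, so I would spend most of the care on articulating why conditioning on the event $\{T\neq\phi\}$, combined with the independence of the pruned subtrees at each surviving offspring of $v$, forces the $\{q_k^{(1)}\}$ computed at a single vertex to define the full offspring law of every non-root vertex in the pruned tree.
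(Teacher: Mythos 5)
Your proposal is correct and takes essentially the same route as the paper: the paper's auxiliary ``thinned Galton-Watson process'' with offspring law $p_m=\sum_{k\geq m\vee 2}\binom{k}{m}q_0^{k-m}(1-q_0)^m\frac{q_k}{1-q_0}$ followed by the series-reduction normalization $q_m^{(1)}=p_m/(1-p_1)$ is exactly your conditioning-on-survival computation at a single vertex (your unnormalized weights are $(1-q_0)p_m$ and your survival probability is $(1-q_0)(1-p_1)=(1-q_0)\bigl(1-Q'(q_0)\bigr)$), and your Taylor-expansion identification of $Q_1$ and the differentiation argument for criticality/subcriticality match the paper's steps. As a minor bonus, your identity $Q_1'(1)=1+\frac{Q'(1)-1}{1-Q'(q_0)}$ is the correct form of the paper's display, which contains a typo writing $Q(q_0)$ where $Q'(q_0)$ is meant.
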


\begin{proof}
The standard thinning argument (with $\pi_1=q_0$ being the probability of eliminating an offspring) implies that $\cR(T)$ is distributed as a Galton-Watson tree, i.e., $\cR(T)\stackrel{d}{\sim}\mathcal{GW}(\{q_m^{(1)}\})$. Indeed, think of a random tree obtained as a result of the auxiliary branching process defined in the following way. 
We trace the branching process that starts with one generation zero progenitor vertex (the root) that produces exactly one offspring.
From generation one on, the branching process evolves according to the offspring distribution $\left\{{q_k \over 1-q_0}\right\}_{k=2,3,\hdots}$.
Next, the process is thinned: once an offspring is produced (in each generation, including generation zero), 
it is either instantaneously eliminated with probability $q_0$ or is left untouched with probability $1\!-\!q_0$, where these 
Bernoulli trials are performed independently of each other and the branching history.
Naturally, this generates a Galton-Watson branching process with branching probabilities $\{p_m\}$ calculated as follows
\be\label{eqn:pmBinom}
p_m=\sum\limits_{k = m\vee 2}^\infty \binom{k}{m}q_0^{k-m} (1-q_0)^m  {q_k \over 1-q_0}.
\ee
The above defined {\it thinned Galton-Watson process} can be equivalently formulated by tracking the original branching process with branching probabilities $\{q_k\}$.
Here, for each offspring, it is instantaneously decided whether the offspring is a leaf or not via a Bernoulli trial with probabilities $q_0$ and $1-q_0$ for `leaf' and `no leaf'
outcomes respectively. If the offspring is decided to be a leaf, it is pruned instantaneously. If not a leaf, it will branch according to the offspring distribution $\left\{{q_k \over 1-q_0}\right\}_{k=2,3,\hdots}$. The thinned Galton-Watson process differs from the original one by pruning all the leaves. Hence, it implements the instantaneous 
Horton pruning, but not yet series reduction. Indeed, the above thinned Galton-Watson prices with branching probabilities $\{p_m\}$ can have single offspring nodes.

\medskip
\noindent
Next, we need to account for the series reduction by generating a Galton-Watson process with the branching probabilities $\{q_m^{(1)}\}$
by letting
$$q_0^{(1)} = {p_0 \over 1-p_1} ~= {(1-q_0)^{-1}\sum\limits_{k = 2}^\infty  q_0^k  q_k \over 1-\sum\limits_{k = 2}^\infty k q_0^{k-1}  q_k},$$
$q_1^{(1)}=0$, and for $m \geq 2$,
$$q_m^{(1)} = {p_m \over 1-p_1} ~={(1-q_0)^{m-1}  \sum\limits_{k = m}^\infty  \binom{k}{m} q_0^{k-m} q_k \over 1-\sum\limits_{k = 2}^\infty k q_0^{k-1} q_k}.$$
This branding process induces the tree measure $\nu(T)$. 
Note that there is an alternative derivation of \eqref{eqn:q0par0} as by Cor.~\ref{cor:GWtreePIj}, $q_0^{(1)}={\pi_2 \over 1-\sigma_1}={S(q_0)-q_0 \over 1-q_0}={Q(q_0)-q_0 \over (1-q_0)(1-Q'(q_0))}$.

\medskip
\noindent
We notice that the corresponding generating function can be computed as follows
\begin{align*}
Q_1(z) & =\sum\limits_{m=0}^\infty z^m q_m^{(1)} 
={(1-q_0)^{-1} \over 1-\sum\limits_{k = 2}^\infty k q_0^{k-1}  q_k}\left(\sum\limits_{k = 2}^\infty  q_0^k  q_k+  \sum\limits_{m=2}^\infty \sum\limits_{k = m}^\infty  \big(zq_0^{-1}(1-q_0)\big)^m \binom{k}{m} q_0^k  q_k \right)
\nonumber \\
&= {(1-q_0)^{-1} \over 1-Q'(q_0)}\left(\sum\limits_{k = 2}^\infty  q_0^k  q_k+   \sum\limits_{k=2}^\infty \sum\limits_{m = 2}^k  \binom{k}{m} \big(zq_0^{-1}(1-q_0)\big)^m q_0^k  q_k \right)
\nonumber \\
%&= {(1-q_0)^{-1} \over 1-q_0^{-1}\sum\limits_{k = 2}^\infty k q_0^k  q_k}\left(Q\big(q_0+z(1-q_0)\big)-q_0 -z(1-q_0)\sum\limits_{k = 2}^\infty  k q_0^{k-1}  q_k\right)
%\nonumber \\
&= {(1-q_0)^{-1} \over 1-Q'(q_0)}\left(Q\big(z+(1-z)q_0\big)-q_0 -z(1-q_0)Q'(q_0) \right)
\end{align*}
by the binomial theorem, implying \eqref{eqn:genQone}.
We proceed by differentiating ${d \over dz}$ in \eqref{eqn:genQone}, obtaining
\be\label{eqn:QoneFromQ_0}
Q_1' (z)={Q'(q_0+z(1-q_0))-Q'(q_0) \over 1-Q'(q_0)}.
\ee
%and for all $m \geq 2$,
%\be\label{eqn:QoneFromQ}
%Q_1^{(m)}(z)={(1-q_0)^{m-1} \over 1-Q'(q_0)}Q^{(m)}(q_0+z(1-q_0)) \qquad \forall z\in [0,1).
%\ee
Next, we observe that if $\mu(T)$ is critical, \eqref{eqn:QoneFromQ_0} implies $\sum\limits_{k = 2}^\infty k q_k^{(1)}=Q_1' (1)={Q'(1)-Q(q_0) \over 1-Q(q_0)}=1$. 
That is, the critical process stays critical after a Horton pruning.
Finally, in the subcritical case, $Q'(1)<1$, and by formula \eqref{eqn:QoneFromQ_0},
$\,Q_1'(1)={Q'(1)-Q(q_0) \over 1-Q(q_0)}<Q'(1)$.
\end{proof}

\medskip
\noindent
Formula \eqref{eqn:genQone} matches the evolution of the generator under tree erasure discussed by He and Winkel \cite[Lemma 11]{HeWinkel2014}; see also 
Neveu \cite{Neveu86} and Kesten \cite{Kesten87}.
Also, observe that expression \eqref{eqn:genQone} is of the same form as the generating function of a thinned Galton-Watson process
in the work of Duquesne and Winkel \cite[eqn.~(10) of Sec. 2.2]{DW2007}, where the thinning was done in the context of a Bernoulli leaf coloring scheme.

\begin{lem}\label{lem:LimitExists}
Consider a critical or subcritical Galton-Watson measure $\mathcal{GW}(\{q_k\})$ with $q_1=0$.
If it is Horton prune-invariant (self-similar) (Def.~\ref{def:prune}), then the limit 
$$\lim\limits_{x \rightarrow 1-}\left({\ln{g(x)} \over -\ln(1-x)}\right)=L$$
exists and is finite. Moreover, $$L=1-{\ln\big(1-Q'(q_0)\big) \over \ln(1-q_0)}.$$
\end{lem}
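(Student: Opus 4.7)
The plan is to convert Horton prune-invariance into a functional equation for $g$, iterate it, and read off the limit.

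By prune-invariance and Lem.~\ref{lem:BWW00_1}, the generating function $Q$ must equal the pruned generator $Q_1$ in \eqref{eqn:genQone}, which after setting $x = q_0 + (1-q_0)z$ (so that the linear terms $q_0 + z(1-q_0) = x$ cancel in the numerator) becomes
$$Q(z) - z \;=\; \frac{Q(x) - x}{(1-q_0)\bigl(1 - Q'(q_0)\bigr)}.$$
Differentiating both sides at $z = 1$ yields $(Q'(1)-1)\,Q'(q_0) = 0$, so in the subcritical case prune-invariance forces $Q'(q_0) = 0$ and hence $q_k = 0$ for all $k\ge 2$; combined with $q_1 = 0$ this gives $q_0 = 1$, the excluded degenerate point mass. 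Thus we may assume the measure is critical, so $q_0 \in [1/2, 1)$ and $Q'(q_0) < Q'(1) = 1$.

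With criticality in hand, Prop.~\ref{prop:Qminusx} applies: substituting $Q(z) - z = (1-z)^2 g(z)$ and $Q(x) - x = (1-q_0)^2(1-z)^2 g(x)$ into the functional equation and cancelling $(1-z)^2$ produces the self-similarity
$$g\bigl(q_0 + (1-q_0)z\bigr) \;=\; c\, g(z),\qquad c := \frac{1 - Q'(q_0)}{1-q_0} > 0.$$
The contraction $T(z) = q_0 + (1-q_0)z$ has iterates $1 - T^n(z) = (1-q_0)^n (1-z)$ and $g(T^n(z)) = c^n g(z)$. Every $x \in [q_0, 1)$ admits a unique representation $x = T^n(z_0)$ with $n = n(x) \to \infty$ as $x \to 1^-$ and $z_0 = z_0(x) \in [0, q_0)$, and so
$$\frac{\ln g(x)}{-\ln(1-x)} \;=\; \frac{n \ln c + \ln g(z_0)}{-n\ln(1-q_0) - \ln(1-z_0)}.$$
Since $g$ is a nonnegative power series with $g(0) = q_0 > 0$, it is continuous and strictly positive on the compact interval $[0, q_0]$, so $\ln g(z_0)$ and $\ln(1-z_0)$ stay bounded uniformly in $z_0$. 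The ratio therefore converges, uniformly in $z_0$, to
$$\frac{\ln c}{-\ln(1-q_0)} \;=\; 1 - \frac{\ln\bigl(1-Q'(q_0)\bigr)}{\ln(1-q_0)},$$
which is the asserted value of $L$.

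The main subtlety I anticipate is ensuring the limit is over the continuum $x \to 1^-$ rather than only along orbits of $T$. Parametrising each $x$ near $1$ by a point $z_0$ in the compact ``fundamental domain'' $[0, q_0]$—on which $g$ is bounded and bounded away from $0$—resolves this and delivers an honest (not merely sequential) limit.
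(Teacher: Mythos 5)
Your argument is correct and is essentially the paper's own proof: both convert prune-invariance into the functional equation \eqref{eqn:genQoneRec}, iterate it $n$ times, and tame the continuum limit $x\to 1^-$ by representing each $x$ near $1$ over the fundamental domain $[0,q_0)$, on which the relevant logarithms are uniformly bounded --- your only deviation being cosmetic, in that you pass to $g$ at the outset via Prop.~\ref{prop:Qminusx} to get the exact self-similarity $g\big(q_0+(1-q_0)z\big)=c\,g(z)$, whereas the paper sandwiches $\ln\big(Q(x)-x\big)/\ln(1-x)$ on the intervals $\big(1-(1-q_0)^k,\,1-(1-q_0)^{k+1}\big)$ and converts to $g$ only in its final display. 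One minor bonus: since Prop.~\ref{prop:Qminusx} presupposes criticality, your explicit elimination of the subcritical case (differentiating the functional equation at $z=1$ to force $Q'(q_0)=0$ and hence the point mass $q_0=1$, which is not prune-invariant) makes rigorous a step the paper's proof leaves implicit.
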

\begin{proof} 
The Horton prune-invariance implies $Q_1(z)=Q(z)$ in the recursion \eqref{eqn:genQone}: 
\be\label{eqn:genQone1}
Q(z)=z+{Q\big(q_0+(1-q_0)z\big)-q_0 -z(1-q_0) \over (1-q_0)\big(1-Q'(q_0)\big)},
\ee
which we rewrite as
\be\label{eqn:genQoneRec}
Q\big(q_0+(1-q_0)z\big)-\big(q_0+z(1-q_0)\big) =M(q_0)\big(Q(z)-z\big),   ~\text{ where }~M(q_0)=(1-q_0)\big(1-Q'(q_0)\big).
\ee
Then, for any $k \in \mathbb{N}$,
$$Q\big(1-(1-q_0)^k+(1-q_0)^k z\big)-\big(1-(1-q_0)^k+(1-q_0)^k z\big)=\big(M(q_0)\big)^k \big(Q(z)-z\big)$$
and for $z \in [0,1)$,
\begin{align}\label{eqn:kqLim}
\lim\limits_{k \rightarrow \infty}&{\ln{(Q\big(1-(1-q_0)^k+(1-q_0)^k z\big)-\big(1-(1-q_0)^k+(1-q_0)^k z\big))} \over \ln\Big(1-\big(1-(1-q_0)^k+(1-q_0)^k z\big) \Big)} \nonumber\\
&=\lim\limits_{k \rightarrow \infty}{k\ln M(q_0)+\ln{(Q(z)-z)} \over k\ln(1-q_0)+\ln(1-z)}={\ln M(q_0) \over \ln(1-q_0)}.
\end{align}
Next, notice that for $z \in I_0=[0,q_0)$, 
$$\ln(q_0)\leq\ln{(Q(z)-z)} \leq \ln{\big(Q(q_0)-q_0\big)} ~\text{ and }~\ln(1-q_0) \leq \ln(1-z)\leq 0.$$
Hence, for any $x \in I_k=\big(1-(1-q_0)^k, \,1-(1-q_0)^{k+1}\big)$, there is a $z   \in I_0$ such that
$$x=1-(1-q_0)^k+(1-q_0)^k z$$
and
$${k\ln \!M(q_0)+\!\ln{\!\big(Q(q_0)-q_0\big)} \over k\ln(1-q_0)+\ln(1-q_0)} \leq \!{\ln{(Q(x)-x)} \over \ln(1-x)}\!=\!{k\ln \!M(q_0)\!+\!\ln{(Q(z)-z)} \over k\ln(1-q_0)+\ln(1-z)} 
\leq \!{k\ln \!M(q_0)\!+\!\ln(q_0) \over k\ln(1-q_0)}.$$
Hence, the following limit exists
$$\lim\limits_{x \rightarrow 1-}{\ln{(Q(x)-x)} \over \ln(1-x)}={\ln M(q_0) \over \ln(1-q_0)}.$$
Finally,
\begin{align}\label{eqn:LviaAq0}
\lim\limits_{x \rightarrow 1-}\left({\ln{g(x)} \over -\ln(1-x)}\right)=&\lim\limits_{x \rightarrow 1-}{2\ln(1-x)-\ln{(Q(x)-x)} \over \ln(1-x)}
=2-{\ln M(q_0) \over \ln(1-q_0)} \nonumber \\
&=1-{\ln\big(1-Q'(q_0)\big) \over \ln(1-q_0)}.
\end{align}
\end{proof}

\medskip
\noindent
Next, we define a single parameter family of critical Galton-Watson measures $\mathcal{GW}(\{q_k\})$ with $q_1=0$ on $\cT^|$.
\begin{Def}[{\bf Invariant Galton-Watson measures}]\label{def:IGWq0}
For a given $q \in [1/2,1)$, a critical Galton-Watson measure $\mathcal{GW}(\{q_k\})$ 
is said to be the {\it invariant Galton-Watson (IGW)} measure with parameter $q$ and denoted by $\mathcal{IGW}(q)$ 
if its generating function is given by
\be\label{eqn:completeGWQz}
Q(z)=z+q(1-z)^{1/q}.
\ee
The respective branching probabilities are $q_0=q$, $q_1=0$, $q_2=(1-q)/2q$, and 
\be\label{eqn:completeGWqk}
q_k= {1-q \over k!\,q}\, \prod\limits_{i=2}^{k-1}(i-1/q) \quad (k\geq 3).
\ee
Here, if $q=1/2$, then the distribution is critical binary, i.e., 
$\mathcal{GW}(q_0\!= q_2\!=\!1/2)$.
If $q \in (1/2,1)$, the distribution is of Zipf type with
\be\label{eqn:completeGWqkZipf}
q_k={(1-q) \Gamma(k-1/q) \over q \Gamma(2-1/q) \,k!} \sim C k^{-(1+q)/q}, ~\text{ where }~C={1-q \over q\,\Gamma(2-1/q)}.
\ee
\end{Def}

\begin{thm}[{{\bf Self-similar Galton-Watson measures}}]\label{thm:completeGW}
Consider a critical or subcritical Galton-Watson measure $\mathcal{GW}(\{q_k\})$ with $q_1=0$ that satisfies Assumption \ref{asm:reg}.
The measure is Horton prune-invariant (self-similar) (Def.~\ref{def:prune}) if and only if it is the invariant Galton-Watson (IGW) measure $\mathcal{IGW}(q_0)$ with $q_0\in[1/2,1)$.
\end{thm}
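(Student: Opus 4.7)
The plan is to establish both implications of the theorem.

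For the sufficiency direction ($\Leftarrow$), I will simply substitute $Q(z) = z + q_0(1-z)^{1/q_0}$ directly into the pushforward formula \eqref{eqn:genQone} from Lem.~\ref{lem:BWW00_1} and verify $Q_1 = Q$. This reduces to the routine computation $Q'(z) = 1-(1-z)^{(1-q_0)/q_0}$, so $1-Q'(q_0) = (1-q_0)^{(1-q_0)/q_0}$ and $M(q_0) := (1-q_0)(1-Q'(q_0)) = (1-q_0)^{1/q_0}$, after which the key identity
\begin{equation*}
Q\big(q_0 + (1-q_0)z\big) - q_0 - (1-q_0)z \;=\; q_0\big[(1-q_0)(1-z)\big]^{1/q_0} \;=\; M(q_0)\big(Q(z)-z\big)
\end{equation*}
falls out immediately and gives $Q_1(z) = z + (Q(z)-z) = Q(z)$.

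For the necessity direction ($\Rightarrow$), the plan is a four-step analysis. First, Prop.~\ref{PItoT} yields the Toeplitz property, and applying Lem.~\ref{lem:q0} under Assumption~\ref{asm:reg} forces criticality with $q_0 = 1 - S'(1) \in [1/2,1)$ and the geometric order distribution $\pi_k = q_0(1-q_0)^{k-1}$, $\sigma_k = 1 - (1-q_0)^k$; the degenerate $q_0 = 1$ case is ruled out because $\mu(\phi)=0$ makes prune-invariance vacuous. Second, Lem.~\ref{lem:LimitExists} (which uses only prune-invariance) gives $L = 1 - \ln(1-Q'(q_0))/\ln(1-q_0)$, while Lem.~\ref{lem:dSof1} under Assumption~\ref{asm:reg} with $S'(1)=1-q_0$ yields $L = 2 - 1/q_0$; equating the two expressions determines $1-Q'(q_0) = (1-q_0)^{(1-q_0)/q_0}$ and hence $M(q_0) = (1-q_0)^{1/q_0}$. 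Third, Lem.~\ref{lem:BWW00_1} translates prune-invariance $Q_1 = Q$ into the Schr\"{o}der-type functional equation
\begin{equation*}
h(\Psi(z)) = M(q_0)\, h(z), \qquad h(z) := Q(z)-z, \quad \Psi(z) := q_0 + (1-q_0)z,
\end{equation*}
and by the sufficiency calculation the candidate $\hat h(z) := q_0(1-z)^{1/q_0}$ also solves the same equation with the same $M(q_0)$.

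The hard part will be the fourth step: uniqueness. Setting $E := h - \hat h$, I will have $E(0) = 0$ and $E(\Psi(z)) = M(q_0)E(z)$, so $E$ vanishes on the forward orbit $\{\sigma_n\}_{n\ge 0}$ of $0$ under $\Psi$. The ratio $f(z) := E(z)/(1-z)^{1/q_0}$ is real-analytic on $[0,1)$ and $\Psi$-invariant, so $f$ is constant on every $\Psi$-orbit, and these orbits accumulate only at the boundary point $z = 1$. My plan is to combine the log-asymptotic $\ln h(z)/\ln(1-z) \to 1/q_0$ from Lem.~\ref{lem:LimitExists} (applied to both $h$ and $\hat h$) with the multiplicative $\Psi$-invariance of $f$ to show that $f$ is bounded on $(0,1)$, and then to invoke the nonnegativity of the Taylor coefficients $\{q_k\}$ of $Q$ at $z=0$---equivalently, the Koenigs linearization of the Schr\"{o}der equation at the attracting fixed point $z = 1$ with eigenvalue $(1-q_0)^{1/q_0}$, combined with a Karamata-type Tauberian argument---to rule out the multiplicatively-periodic oscillatory corrections to the pure power-law $(1-z)^{1/q_0}$ that the functional equation alone would otherwise allow. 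This singles out the principal analytic solution and, matched against $E(0) = 0$, forces $f \equiv 0$, giving $Q(z) = z + q_0(1-z)^{1/q_0}$ as required.
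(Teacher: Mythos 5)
Your sufficiency computation and Steps 1--3 of the necessity direction are sound; indeed, pinning down $M(q_0)=(1-q_0)^{1/q_0}$ by equating the two expressions for $L$ (Lem.~\ref{lem:LimitExists} versus Lem.~\ref{lem:dSof1} with $S'(1)=1-q_0$ from Lem.~\ref{lem:q0}) is a legitimate shortcut that the paper's proof does not even need. The genuine gap is Step 4, and the tools you propose there provably cannot close it. The general solution of your Schr\"oder equation $h(\Psi(z))=M(q_0)\,h(z)$ is $h(z)=q_0\exp\bigl\{-\int_0^{-\ln(1-z)}\alpha(w)\,dw\bigr\}$ with $\alpha$ periodic of period $\kappa_0=-\ln(1-q_0)$, and pinning $M(q_0)=(1-q_0)^{1/q_0}$ only fixes the \emph{mean} of $\alpha$ over a period to be $1/q_0$: every nonconstant periodic $\alpha$ with that mean and $\alpha(0)=1/q_0$ still solves the same equation with the same multiplier. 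The resulting ratio $f=E/(1-z)^{1/q_0}$ is a multiplicatively periodic factor: its boundedness is trivial (by $\Psi$-invariance it is determined by its values on the fundamental domain $[0,q_0]$, where it is continuous), so your log-asymptotic buys nothing; and such a factor can vanish on the single orbit $\{\sigma_n\}$ of $0$ without vanishing identically, so $E(0)=0$ plus boundedness does not force $f\equiv 0$. Most fatally, nonnegativity of the $q_k$ combined with a Karamata-type Tauberian argument cannot rule out the periodic corrections: the paper's Discussion exhibits the referee's family \eqref{eqn:typeSum} of honest critical, Horton prune-invariant Galton-Watson offspring distributions with $q_m\ge 0$ whose generating functions satisfy exactly this type of functional equation \eqref{eqn:genQoneRec} with an oscillatory tail. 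Those measures are excluded from the theorem only because they violate Assumption~\ref{asm:reg} --- and your Step 4 never invokes Assumption~\ref{asm:reg} at all, which is precisely the mistake (the gap the referee found in the paper's first version) that the regularity hypothesis exists to repair.

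The correct finish --- and the paper's actual route --- is to feed the regularity assumption into the periodic freedom directly. From the general solution one obtains, via \eqref{eqn:QzviaSx}, the representation $S(z)=z+q_0(1-z)\varphi(z)$ with $\varphi(z)=\varphi\bigl(q_0+(1-q_0)z\bigr)$ and $\varphi(0)=1$ (this is \eqref{eqn:SxviaAlphaL}). Assumption~\ref{asm:reg} guarantees that $\varphi(1)=\lim_{x\to 1-}\varphi(x)=\bigl(1-S'(1)\bigr)/q_0$ exists, and iterating the invariance gives $\varphi(x)=\lim_{k\to\infty}\varphi\bigl(1-(1-q_0)^k+(1-q_0)^k x\bigr)=\varphi(1)$ for every $x\in[0,1)$; hence $\varphi\equiv 1$, $S(z)=z+q_0(1-z)$, and $Q(z)=z+q_0(1-z)^{1/q_0}$. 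Note that with this argument your Steps 1--2 become dispensable: the paper never needs to pre-compute $M(q_0)$, since the linearity of $S$ forces the multiplier automatically.
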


\begin{proof}
Combining equations \eqref{eqn:q0par0} and  \eqref{eqn:genQone}, we have
\be\label{eqn:genQone2}
Q_1(z)=z+q_0^{(1)}{Q\big(q_0+(1-q_0)z\big) \,-\big(q_0+(1-q_0)z\big) \over Q(q_0)-q_0}.
\ee 
If the Galton-Watson measure is Horton prune-invariant, then $Q_1(z)=Q(z)$, and \eqref{eqn:genQone2} implies
\begin{equation*}%\label{eqn:RzIGW}
R(z)={R\big(q_0+(1-q_0)z\big) \over R(q_0)} \quad \text{ for }\quad R(z)={Q(z)-z \over q_0}
\end{equation*}
for $z \in [0,1)$.
Hence, letting $\ell(z)=\ln R(1-z)$ for $z\in(0,1]$, we have $$\ell(z)+\ell(1-q_0)=\ell\big((1-q_0)z\big).$$ 
Finally, for $r(y)=\ell\big(e^{-y}\big)=\ln R\big(1-e^{-y}\big)$ for $y \in [0,\infty)$ and $\kappa_0=-\ln(1-q_0)$, 
\begin{equation*}%\label{eqn:IGWry}
r(y+\kappa_0)=r(y)+r(\kappa_0) \qquad \forall y \in [0,\infty).
\end{equation*}
Therefore,  $r'(y+\kappa_0)=r'(y)$ and $r(y)=-\int\limits_0^y \alpha(w)\,dw$ for some $\kappa_0$-periodic function $\alpha(y)$. 
Thus,
\be\label{eqn:QzviaAlpha}
Q(z)=z+q_0R(z)=z+q_0e^{\ell(1-z)}=z+q_0e^{r\big(-\ln(1-z)\big)}=z+q_0\exp\left\{-\!\!\!\!\!\!\int\limits_0^{-\ln(1-z)} \!\!\!\!\!\!\alpha(w)\,dw\right\}.
\ee
Next, $~0=q_1=Q'(0)=1-\alpha(0) q_0$ implies  $\,\alpha(0) ={1 \over q_0}$. 
Also, for $z \in (0,1)$, $R'(z)={Q'(z)-1 \over q_0}<0$ and $r(y)$ is a decreasing function. Hence, $\alpha(y)>0$ for all $y \in (0,1)$.

\medskip
\noindent
Letting $w=-\ln(1-x)$ in \eqref{eqn:QzviaAlpha}, we have
\be\label{eqn:QzviaAlphaL}
\ln\big(Q(z)-z\big)=\ln(q_0)-\!\!\!\!\!\!\int\limits_0^{-\ln(1-z)} \!\!\!\!\!\!\alpha(w)\,dw ~=\ln(q_0)-\int\limits_0^z {\alpha(-\ln(1-x)) \over 1-x}\,dx \qquad \forall z \in [0,1).
\ee
Recall that $\,{d \over dz}\ln\big(Q(z)-z\big)={-1 \over S(z)-z}$, and therefore,
\be\label{eqn:QzviaSx}
\ln\big(Q(z)-z\big)=\ln(q_0)-\int\limits_0^z {dx \over S(x)-x}  \qquad \forall z \in [0,1).
\ee
Equations \eqref{eqn:QzviaAlphaL} and \eqref{eqn:QzviaSx} yield
\be\label{eqn:SxviaAlphaL}
S(z)=z+q_0(1-z)\varphi(z), \quad \text{ where }\varphi(z)={1 \over q_0 \alpha(-\ln(1-z))}.
\ee
Here, $\,\alpha(0) ={1 \over q_0}$ implies $\varphi(0)=1$. 
Since $\alpha(z)$ is $\kappa_0$-periodic function,
$$\alpha(-\ln(1-z))=\alpha(-\ln(1-z)+\kappa_0)=\alpha\left(-\ln\big(1-(q_0+(1-q_0)z\big)\right)$$
and $\varphi(z)$ satisfies
\be\label{eqn:quasiPhi}
\varphi(z)=\varphi\big(q_0+(1-q_0)z\big) \qquad \forall z \in [0,1).
\ee

\medskip
\noindent
Equation \eqref{eqn:SxviaAlphaL} implies the existence of the limit
$$\varphi(1)=\lim\limits_{x \rightarrow 1-}\varphi(x)={1 \over q_0}\lim\limits_{x \rightarrow 1-}{S(x)-x \over 1-x}={1-S'(1) \over q_0}.$$
Next, iterating \eqref{eqn:quasiPhi}, we have
$$\varphi(x)=\lim\limits_{k \rightarrow \infty}\varphi\left(\big(1-(1-q_0)^k\big)+(1-q_0)^k x\right)=\varphi(1) \qquad \forall x \in [0,1).$$
Hence, $\,\varphi(x)\equiv 1$, and by \eqref{eqn:SxviaAlphaL},
$$S(z)=z+q_0(1-z).$$
Consequently, \eqref{eqn:QzviaSx} implies $Q(z)=z+q_0(1-z)^{1/q_0}$.

\medskip
\noindent
Finally, observe that for an invariant Galton-Watson measure $\mathcal{IGW}(q_0)$ with any $q_0 \in [1/2,1)$ satisfies \eqref{eqn:genQone2}.
In particular, equation \eqref{eqn:completeGWQz} implies
\be\label{eqn:SzIGWq0}
S(z)=q_0+(1-q_0)z.
\ee
The statement of the theorem follows.
\end{proof}

\medskip
\begin{rem}[{{\bf Heuristics for a linear $S(z)$}}]\label{rem:gap1}
Consider a Horton prune-invariant measure (or at least a Toeplitz measure 
with $q_0<1$) that satisfies Assumption~\ref{asm:reg}.
Lemma~\ref{lem:q0} shows that in this case 
$$\frac{\pi_{k+1}}{\pi_{k}} =\frac{\sigma_{k+1}-\sigma_k}{\pi_k}
= S'(1)=1-q_0\quad\text{for all } k\ge 1.$$
Together with the recursion $\sigma_k = S(\sigma_{k-1})$ 
of Cor.~\ref{cor:GWtreePIj} (see also Fig.~\ref{fig:rec}),
this implies that the points $(\sigma_k,S(\sigma_k))$ lie on the line
\[y(z) = q_0+(1-q_0)z.\] 
This observation suggests $S(z) = q_0+(1-q_0)z$ as a possible solution of 
the equation \eqref{eqn:genQone} with $Q_1(z)=Q(z)$, and 
the corresponding $Q(z)=z+q_0(1-z)^{1/q_0}$ is found by \eqref{eqn:QzviaSx}.
Theorem~\ref{thm:completeGW} ensures that this is the only solution
under the regularity Assumption~\ref{asm:reg}. 
\end{rem}

\begin{rem}[{{\bf Intuition behind the regularity condition}}]\label{rem:gap2}
The Horton pruning acts as a rescaling (vertical and horizontal) on the function
 $S(z)-z$ from the restricted domain $[q_0,1]$ to $[0,1]$,
according to \eqref{eqn:genQone}.
After $k$ consecutive prunings, function $S_k(z)-z$ with the domain $[0,1]$
is obtained via scaling from a restriction of $S(z)-z$ to the interval $[1-(1-q_0)^k, \,1]$.
Thus, consecutive pruning rescales and maps the function $S(z)-z$ in the vicinity of $1-$
to the interval $[0,1]$.
Assumption~\ref{asm:reg} requires a smooth behavior of $S(z)$ at $z=1-$.
The rescaling translates this smooth behavior to the ultimate linearity of function $S(z)$
on the entire interval $[0,1]$.
The most general form of prune-invariant $Q(z)$ is given in \eqref{eqn:QzviaAlpha}, 
which allows a non-linear
oscillatory behavior of $S(z)$ between the points $(\sigma_k,S(\sigma_k))$
discussed in Rem.~\ref{rem:gap1}.
The rescaling argument shows that such oscillations necessarily lead 
to non-smooth behavior of $S(z)$ at $z=1-$ and hence 
violate Assumption~\ref{asm:reg}.
\end{rem}

\begin{rem}[{{\bf General prune-invariant measures}}]\label{rem:gapS1L}
Recall that according to Lem.~\ref{lem:LimitExists}, the general 
Horton prune-invariant 
distributions adhere to the existence and finiteness of the  limit 
$L=\lim\limits_{x \rightarrow 1-}\left({\ln{g(x)} \over -\ln(1-x)}\right)$,
which is weaker than $S'(1)$ required in Assumption~\ref{asm:reg} (see Lem.~\ref{lem:dSof1}).
The gap between the two conditions allows 
for the existence of Horton prune-invariant distributions that satisfy \eqref{eqn:genQone1}
and have a nonlinear function $S(z)$. 
An example of such a measure and further discussion is given in Sect.~\ref{sec:dis}.
\end{rem}

%%%%%%%%%%%%%%%%%%%%%%%%%%%
%%%%%%%%%%%%%%%%%
\subsection{Attractors and basins of attraction}

\begin{thm}[{{\bf Attraction property of critical Galton-Watson trees}}]\label{thm:IGWattractor}
Consider a critical Galton-Watson measure $\rho_0 \equiv\mathcal{GW}(\{q_k\})$ with $q_1=0$ on $\cT^|$.
Starting with $k=0$, and for each consecutive integer,
let $\nu_k=\cR_*(\rho_k)$ denote the pushforward probability measure induced by the pruning operator, i.e.,
$\nu_k(T)=\rho_k \circ \cR^{-1}(T) = \rho_k \big(\cR^{-1}(T)\big)$,
and set $\rho_{k+1}(T)=\nu_k\left(T~|T\ne\phi\right)$.
Suppose Assumption \ref{asm:reg} is satisfied. 
Then, for any $T\in\cT^|$,
$$\lim_{k\to\infty}\rho_k(T)=\rho^*(T),$$
where $\rho^*$ denotes the invariant Galton-Watson measure 
$\mathcal{IGW}(q)$ with $q=1-S'(1)$.

Finally, if the Galton-Watson measure $\rho_0 \equiv\mathcal{GW}(\{q_k\})$ is subcritical, then 
$\rho_k(T)$ converges to a point mass measure, $\mathcal{GW}(q_0\!=\!1)$.
\end{thm}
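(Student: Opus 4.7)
The plan is to reduce the claim to pointwise convergence of the offspring generating functions $Q_n(z)$ of $\rho_n$ to that of $\rho^*$, and then exploit a clean closed-form rescaling obtained by iterating the pruning recursion of Lem.~\ref{lem:BWW00_1}. By Lem.~\ref{lem:BWW00_1}, each $\rho_n$ is again a critical (resp.\ subcritical) Galton--Watson measure with $q_1^{(n)} = 0$, so it suffices to show $q_k^{(n)} \to q_k^*$ for every $k$; any finite tree $T \in \cT^|$ then satisfies $\rho_n(T) = \prod_{v \ne \rho} q_{d(v)}^{(n)} \to \prod_{v \ne \rho} q_{d(v)}^* = \rho^*(T)$, where $d(v)$ is the number of offsprings of $v$.

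The central step is to establish by induction on $n$, using the recursion \eqref{eqn:genQone}, the rescaling identities
\be\label{eq:plan-Qn}
Q_n(z) = z + \frac{Q(\sigma_n + u_n z) - (\sigma_n + u_n z)}{u_n \bigl(1 - Q'(\sigma_n)\bigr)}, \qquad u_n := 1 - \sigma_n,
\ee
and
\be\label{eq:plan-Sn}
1 - S_n(z) = \frac{1 - S\bigl(1 - u_n(1-z)\bigr)}{u_n}.
\ee
The induction step uses $\sigma_n + u_n q_0^{(n)} = \sigma_{n+1}$ and $u_n(1 - q_0^{(n)}) = u_{n+1}$, together with the consequence $1 - Q_n'(q_0^{(n)}) = (1 - Q'(\sigma_{n+1}))/(1 - Q'(\sigma_n))$ of differentiating \eqref{eq:plan-Qn}, which makes everything telescope. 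Since $S$ is strictly increasing on $[0,1]$ (because $S'(z) = (Q(z)-z)Q''(z)/(1-Q'(z))^2 \ge 0$) with $S(x) > x$ on $[0,1)$, the iteration $\sigma_{n+1} = S(\sigma_n)$ starting from $\sigma_0 = 0$ forces $\sigma_n \uparrow 1$ and $u_n \downarrow 0$. Substituting $t = u_n(1-z)$ in \eqref{eq:plan-Sn} and invoking Assumption~\ref{asm:reg} in the form $\lim_{t \to 0^+} (1 - S(1-t))/t = S'(1) = 1 - q$ yields
\[
\frac{1 - S_n(z)}{1-z} \;=\; \frac{1 - S\bigl(1 - u_n(1-z)\bigr)}{u_n(1-z)} \;\longrightarrow\; 1 - q
\]
for every $z \in [0, 1)$, so $S_n(z) \to S^*(z) := q + (1-q)z$ pointwise, and in particular $q_0^{(n)} = S_n(0) \to q$.

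To upgrade $S_n \to S^*$ to $Q_n \to Q^*$, I will use that each $S_n$ is monotone increasing on $[0,1]$ (inherited from $S$ via \eqref{eq:plan-Sn}), so pointwise convergence to the continuous limit $S^*$ is automatically uniform on every compact $[0, z_0] \subset [0, 1)$. Then $S_n(x) - x \to q(1-x) \ge q(1-z_0) > 0$ uniformly on $[0, z_0]$, and the identity $\tfrac{d}{dz}\ln(Q_n(z) - z) = -1/(S_n(z) - z)$ with $Q_n(0) = q_0^{(n)} \to q$ passes to the limit by dominated convergence, giving $Q_n(z) \to z + q(1-z)^{1/q} = Q^*(z)$ for every $z \in [0, 1)$ and trivially at $z = 1$. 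The continuity theorem for probability generating functions then delivers $q_k^{(n)} \to q_k^*$ for each $k \ge 0$, completing the critical case. The subcritical case follows by the same scheme: Rem.~\ref{rem:subcritical} gives $S'(1) = 0$, so $q = 1$, $S^* \equiv 1$, and $Q^*(z) \equiv 1$, which is the point mass $\mathcal{GW}(q_0 = 1)$. I expect the main obstacles to be the inductive bookkeeping behind \eqref{eq:plan-Qn}--\eqref{eq:plan-Sn} and the uniform positivity of $S_n(x) - x$ away from $x = 1$ needed to push $\ln(Q_n(z) - z)$ to its limit through the integral identity.
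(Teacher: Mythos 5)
Your proposal is correct and follows essentially the same route as the paper's proof: iterate the pruning recursion \eqref{eqn:genQone} to express $S_n$ as a rescaling of $S$ near $z=1$ (your $u_n = 1-\sigma_n$ is exactly the paper's telescoping product $\prod_{i=0}^{n-1}(1-q_0^{(i)})$ in closed form), invoke Assumption~\ref{asm:reg} to get $S_n(z)\to q+(1-q)z$, and integrate $\tfrac{d}{dz}\ln\bigl(Q_n(z)-z\bigr)=-1/\bigl(S_n(z)-z\bigr)$ from $Q_n(0)=q_0^{(n)}\to q$ to recover $Q_n(z)\to z+q(1-z)^{1/q}$. Your additional bookkeeping (the explicit closed form for $Q_n$, the monotonicity/Dini argument making the passage to the limit under the integral uniform, and the final step from generating-function convergence to $q_k^{(n)}\to q_k^*$ and hence $\rho_n(T)\to\rho^*(T)$) merely makes rigorous details the paper treats implicitly, and is sound.
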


\begin{proof}%[Proof of Theorem \ref{thm:IGWattractor}]
Let $q_m^{(k)}$ denote the offspring distribution corresponding to the 
critical Galton-Watson tree measure $\rho_k$,
where $q_1^{(k)}=0$ by series reduction. 
First, we observe that  
\begin{align}\label{eqn:q0kFromSx}
\lim\limits_{k \rightarrow \infty} q_0^{(k)}&=\lim\limits_{k \rightarrow \infty}{\pi_k \over 1-\sigma_{k-1}}=\lim\limits_{k \rightarrow \infty}{S(\sigma_{k-1})-\sigma_{k-1} \over 1-\sigma_{k-1}} \nonumber \\
&=\lim\limits_{k \rightarrow \infty}{1+S'(1)\big(\sigma_{k-1}-1\big)+o\big(1-\sigma_{k-1}\big)-\sigma_{k-1} \over 1-\sigma_{k-1}}
~=1-S'(1).
\end{align}

\medskip
\noindent
Let $Q_k(z):=\sum\limits_{m=0}^\infty z^m q_m^{(k)}$ denote the generating function corresponding to the Galton-Watson measure $\rho_k$ and 
$S_k(z)={Q_k(z)-zQ_k'(z) \over 1-Q_k'(z)}$.
%Here, $S_0(z)=S(z)$.
Equation \eqref{eqn:genQone} implies
\be\label{eqn:S1vsS}
S_1(z)={1 \over 1-q_0}S\big(q_0+(1-q_0)z\big) -{q_0 \over 1-q_0}.
\ee
For a given $z \in [0,1)$, we iterate \eqref{eqn:S1vsS}, obtaining 
\be\label{eqn:SkvsS}
S_k(z)=\prod\limits_{i=0}^{k-1}{1 \over 1-q_0^{(i)}}\,S\left(\left(1-\prod\limits_{i=0}^{k-1}(1-q_0^{(i)})\right)+z\prod\limits_{i=0}^{k-1}(1-q_0^{(i)})\right) 
+\left(1-\prod\limits_{i=0}^{k-1}{1 \over 1-q_0^{(i)}}\right),
\ee
where $\prod\limits_{i=0}^{k-1}(1-q_0^{(i)}) \leq 2^{-k} \rightarrow 0$ as $k \rightarrow \infty$.
Next, we substitute 
$$S\left(\left(1-\prod\limits_{i=0}^{k-1}(1-q_0^{(i)})\right)+z\prod\limits_{i=0}^{k-1}(1-q_0^{(i)})\right)
=1+(z-1)S'(1)\prod\limits_{i=0}^{k-1}(1-q_0^{(i)})+o\left(\prod\limits_{i=0}^{k-1}(1-q_0^{(i)})\right)$$
into \eqref{eqn:SkvsS}, getting
$$S_k(z)=1+(z-1)S'(1)+o(1).$$
Hence, for a given $z \in [0,1)$, we have
$${d \over dz}\ln\left(Q_k(z)-z\right)={1 \over z-S_k(z)} ~\longrightarrow~ {1 \over (1-S'(1))(z-1)}\qquad \text{ as }~k \rightarrow \infty.$$
Also, we notice that $Q_k(x)-x$ is a decreasing function ($Q'_k(x)<Q'_k(1)=1$) and 
$$q_0^{(k)} \geq Q_k(x)-x \geq Q_k(z)-z >0 \qquad \forall x \in [0,z].$$
Therefore, letting $k \rightarrow \infty$, we have
$$\ln\left(Q_k(z)-z\right)=\ln q_0^{(k)}+\int\limits_0^z {d \over dx}\ln\left(Q_k(x)-x\right)\,dx
~\longrightarrow~  \ln{q}+{1 \over q}\ln(1-z),$$
where $q=1-S'(1)$, as $\lim\limits_{k \rightarrow \infty} q_0^{(k)}=q$ by \eqref{eqn:q0kFromSx}.
We conclude that
$$\lim\limits_{k \rightarrow \infty} Q_k(z)=z+q\,(1-z)^{1/q}$$
where the right hand side is the generating function for 
$\mathcal{IGW}\left(q\right)$.

\bigskip
\noindent
Finally, if $\rho_0\equiv\mathcal{GW}(\{q_k\})$  is subcritical, \eqref{eqn:dSof1subcrit} 
and \eqref{eqn:q0kFromSx} imply $\lim\limits_{k \rightarrow \infty} q_0^{(k)}=1-S'(1)=1$.
\end{proof}

\medskip
\noindent
Theorem \ref{thm:IGWattractor} and Cor.~\ref{cor:Zipf} immediately imply the following result.
\begin{cor}[{{\bf Attraction property of critical Galton-Watson trees of Zipf type}}]\label{cor:SZipfIGWattractor}
Consider a critical Galton-Watson process $\rho_0 \equiv \mathcal{GW}(\{q_k\})$ with $q_1=0$, with offspring distribution $q_k$ of Zipf type, 
i.e., $q_k \sim C k^{-(\alpha+1)}$, with $\alpha \in (1,2]$ and $C>0$.
Starting with $k=0$, and for each consecutive integer,
let $\nu_k=\cR_*(\rho_k)$ denote the pushforward probability measure induced by the pruning operator, 
%i.e., $\nu_k(T)=\rho_k \circ \cR^{-1}(T) = \rho_k \big(\cR^{-1}(T)\big)$,
and set $\rho_{k+1}(T)=\nu_k\left(T~|T\ne\phi\right)$.
Then, for any $T\in\cT^|$,
$$\lim_{k\to\infty}\rho_k(T)=\rho^*(T),$$
where $\rho^*$ is the invariant Galton-Watson measure $\mathcal{IGW}\left({1 \over \alpha}\right)$.
\end{cor}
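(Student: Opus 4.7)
The plan is to verify that this statement is a straightforward composition of two already-established results: Corollary \ref{cor:Zipf}, which handles the regularity and identifies $S'(1)$ for Zipf-type offspring distributions, and Theorem \ref{thm:IGWattractor}, which gives convergence to $\mathcal{IGW}(1-S'(1))$ for any critical Galton-Watson measure satisfying Assumption \ref{asm:reg}. The chain is short and essentially requires a single numerical identification at the end.

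First, I would note that a Zipf tail $q_k \sim C k^{-(\alpha+1)}$ with $\alpha \in (1,2]$ places $\rho_0$ within the scope of Cor.~\ref{cor:Zipf}. That corollary already verified that the tail-ratio limit $\Lambda = \lim_{k\to\infty} k\sum_{m\ge k} q_m / \sum_{m\ge k} mq_m$ exists and equals $(\alpha-1)/\alpha$, and then invoked Lem.~\ref{lem:RegCond} to conclude that Assumption \ref{asm:reg} holds with $S'(1) = (\alpha-1)/\alpha$. So the hypotheses needed to apply Thm.~\ref{thm:IGWattractor} are already in place, with no further work required on the regularity side.

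Next, I would apply Thm.~\ref{thm:IGWattractor} directly to the critical measure $\rho_0$. The theorem asserts that $\rho_k(T) \to \rho^*(T)$ pointwise on $\cT^|$, with $\rho^* = \mathcal{IGW}(q)$ for $q = 1 - S'(1)$. Substituting the value of $S'(1)$ from the previous step,
\[
q \;=\; 1 - \frac{\alpha-1}{\alpha} \;=\; \frac{1}{\alpha},
\]
which lies in $[1/2, 1)$ precisely because $\alpha \in (1,2]$. Hence the attractor is $\mathcal{IGW}(1/\alpha)$, as claimed.

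There is effectively no obstacle here; the only point worth emphasizing is the range of $q$: the hypothesis $\alpha \in (1,2]$ is exactly what is needed to keep $q = 1/\alpha \in [1/2, 1)$, which in turn is the admissible parameter range for the IGW family from Def.~\ref{def:IGWq0}. If I wanted to extend the corollary to lighter tails (e.g., a Zipf-type distribution with $\alpha > 2$, corresponding to finite second moment), one would instead invoke Lem.~\ref{lem:2ndMomentAsm} to obtain $S'(1) = 1/2$ and Thm.~\ref{thm:IGWattractor} to recover the attractor $\mathcal{IGW}(1/2)$, consistent with the Burd et al.~\cite{BWW00} result.
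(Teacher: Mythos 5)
Your proposal is correct and matches the paper's argument exactly: the paper states that the corollary is an immediate consequence of Cor.~\ref{cor:Zipf} (giving Assumption \ref{asm:reg} with $S'(1)=(\alpha-1)/\alpha$) combined with Thm.~\ref{thm:IGWattractor}, yielding the attractor $\mathcal{IGW}(1-S'(1))=\mathcal{IGW}(1/\alpha)$. Your observation that $\alpha\in(1,2]$ is precisely what keeps $q=1/\alpha$ in the admissible range $[1/2,1)$ is a sensible added sanity check.
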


\medskip
\noindent
Next, Lem.~\ref{lem:2plusMoment} and \ref{lem:2ndMomentAsm} imply the following attraction 
result as a corollary of our Thm.~\ref{thm:IGWattractor}.
The same attraction property has been established in \cite{BWW00} under the
assumption of a bounded offspring distribution.
\begin{cor}[{{\bf Attraction property of critical binary Galton-Watson tree, \cite{BWW00}}}]\label{BWW00_1}
Consider a critical Galton-Watson process $\rho_0 \equiv \mathcal{GW}(\{q_k\})$ with $q_1=0$.
Assume one of the following two conditions holds.
\begin{description}
  \item[(a)] The second moment assumption is satisfied:
  $$\sum \limits_{k=2}^\infty k^2 q_k ~<\infty.$$
  \item[(b)] Assumption \ref{asm:reg} is satisfied, and the ``$2-$'' moment assumption is satisfied, i.e.,
  $$\sum \limits_{k=2}^\infty k^{2-\epsilon}q_k ~<\infty \qquad \forall \epsilon>0.$$
\end{description}
Starting with $k=0$, and for each consecutive integer,
let $\nu_k=\cR_*(\rho_k)$ denote the pushforward probability measure induced by the pruning operator, 
%i.e., $\nu_k(T)=\rho_k \circ \cR^{-1}(T) = \rho_k \big(\cR^{-1}(T)\big)$,
and set $\rho_{k+1}(T)=\nu_k\left(T~|T\ne\phi\right)$.
Then, for any $T\in\cT^|$,
$$\lim_{k\to\infty}\rho_k(T)=\rho^*(T),$$
where $\rho^*$ is the critical binary Galton-Watson measure $\mathcal{IGW}(1/2)$.
\end{cor}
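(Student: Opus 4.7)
The plan is to reduce both cases (a) and (b) to an application of Thm.~\ref{thm:IGWattractor} by verifying that Assumption \ref{asm:reg} holds and that $S'(1) = 1/2$, so that the attractor $\mathcal{IGW}(1-S'(1))$ produced by the theorem is precisely $\mathcal{IGW}(1/2)$. The statement is then a direct corollary of the results already collected in the paper, so the strategy is essentially bookkeeping rather than new analysis.

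First I would dispatch case (a). The finite second moment hypothesis $\sum_k k^2 q_k < \infty$ is exactly the hypothesis of Lem.~\ref{lem:2ndMomentAsm}, which delivers Assumption \ref{asm:reg} together with $S'(1) = 1/2$ (and incidentally $L = 0$) at no further cost.

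For case (b), Assumption \ref{asm:reg} is given as part of the hypothesis, so I only need to identify $S'(1)$. The moment hypothesis ${\sf E}[X^{2-\epsilon}] < \infty$ for every $\epsilon > 0$ is precisely the input to Lem.~\ref{lem:2plusMoment}, which yields
\[
L = \lim_{x \to 1-} \frac{\ln g(x)}{-\ln(1-x)} = 0.
\]
Since Assumption \ref{asm:reg} holds, Lem.~\ref{lem:dSof1} applies and gives the identity $S'(1) = (1-L)/(2-L)$, so substituting $L=0$ produces $S'(1) = 1/2$, just as in case (a).

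With $S'(1) = 1/2$ and Assumption \ref{asm:reg} established in both cases, Thm.~\ref{thm:IGWattractor} applies to $\rho_0$ and yields $\lim_{k\to\infty} \rho_k(T) = \mathcal{IGW}(q)(T)$ with $q = 1 - S'(1) = 1/2$, which is the desired conclusion. There is no genuine obstacle here; the only point requiring a little care is to verify that each invoked lemma's hypotheses are met by the stated hypotheses of the corollary — in particular, that Lem.~\ref{lem:dSof1}'s formula connecting $S'(1)$ and $L$ is available only because Assumption \ref{asm:reg} is assumed in case (b), and is automatically available in case (a) via Lem.~\ref{lem:2ndMomentAsm}.
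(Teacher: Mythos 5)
Your proposal is correct and follows exactly the route the paper intends: case (a) via Lem.~\ref{lem:2ndMomentAsm}, case (b) via Lem.~\ref{lem:2plusMoment} giving $L=0$ combined with the identity $S'(1)=(1-L)/(2-L)$ from Lem.~\ref{lem:dSof1}, and then Thm.~\ref{thm:IGWattractor} with $q=1-S'(1)=1/2$. The paper itself presents this corollary with no separate proof, stating only that Lem.~\ref{lem:2plusMoment} and Lem.~\ref{lem:2ndMomentAsm} imply it via Thm.~\ref{thm:IGWattractor}, so your bookkeeping supplies precisely the intended argument.
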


Figure~\ref{fig:attractor} illustrates convergence of a tree with a 
large branching number to a binary tree.

\begin{figure}[t!] %[p] [t]
\centering\includegraphics[width=0.9\textwidth]{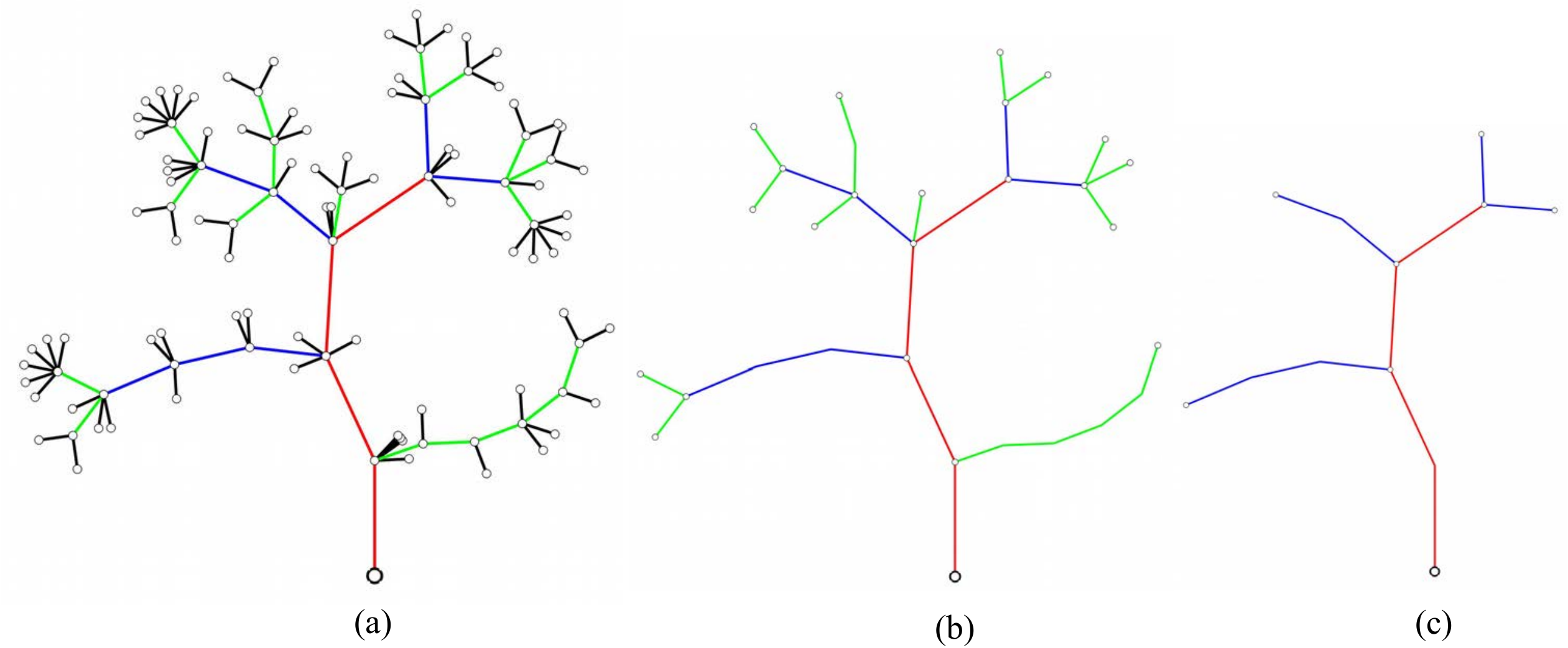}
\caption[Binary attractor: Illustration]
{Binary attractor: Illustration. 
The tree $T$ (panel a) has maximal branching number $b=6$.
Its first pruning (panel b) $\cR(T)$ has maximal branching number $b=3$.
Its second pruning (panel c) $\cR^2(T)$ has maximal branching number $b=2$.
This convergence to binary branching is generic in Galton-Watson 
trees that have offspring distribution with a finite 
$2-\epsilon$ moment; see Cor.~\ref{BWW00_1}.}
\label{fig:attractor}
\end{figure}

%\medskip
%
%%%%%%%
%\begin{thm}[{{\bf Synopsis of results in \cite{BWW00} after generalization}}]
%\label{thm:BWW00}
%Consider a Galton-Watson measure $\mathcal{GW}(\{q_k\})$ with $q_1=0$.
%Assume criticality or subcriticality, i.e., $\sum\limits_{k=0}^\infty kq_k \leq 1$, and 
%suppose ${\sf E}[X^{2-\epsilon}]<\infty$ for the progeny variable $X \stackrel{d}{\sim} \{q_k\}$ and for all $\epsilon>0$..
%The following statements are equivalent:
%\begin{itemize}
%\item[(a)] A distribution is Horton self-similar;
%\item[(b)] A distribution is mean Horton self-similar;
%\item[(c)] A distribution is Tokunaga self-similar;
%\item[(d)] A distribution is critical binary: $q_0=q_2=1/2$.
%\end{itemize}
%Furthermore, the critical binary distribution has Tokunaga sequence
%$T_j=2^{j-1}$, $j\ge 1$, which corresponds to Tokunaga self-similarity with 
%$(a,c)=(1,2)$ and strong Horton law with exponent $R=4$. 
%\end{thm}
%\begin{proof}[Proof of Theorem \ref{thm:BWW00}.]
%{\it (a)} $\Rightarrow$ {\it (d)} follows from Theorem \ref{BWW00_1}, while {\it (d)} $\Rightarrow$ {\it (a)} follows from Theorem \ref{thm:BWW2}.
%{\it (c)} $\Rightarrow$ {\it (d)} follows from Corollary \ref{cor:GWtreeTkGWhalf} after Lemma \ref{lem:GWtreeTkGWhalf}.
%{\it (a)} $\Rightarrow$ {\it (b)} follows from definitions of Horton self-similarity and mean Horton self-similarity.
%Finally, $T_j=2^{j-1}$ follows from Cor.~\ref{cor:ac12inGW}, and Horton law with exponent $R=4$ follows from 
%Tokunaga self-similarity with $(a,c)=(1,2)$.
%\end{proof}

%%%%%%%%%%%%%%%%%
\subsection{Explicit Tokunaga coefficients and Horton law}
In the next lemma we find the Tokunaga coefficients and the Horton exponent for an invariant Galton-Watson tree measure $\mathcal{IGW}(q_0)$. 
\begin{lem}[{\bf Tokunaga coefficients}]\label{lem:IGWq0Tokunaga}
Consider an invariant Galton-Watson measure $\mathcal{IGW}(q_0)$ for $q_0 \in [1/2,1)$. 
Then,
$$\pi_i=q_0\,c^{1-i} \quad \text{ with }\quad c={1 \over 1-q_0}.$$
The measure satisfies Toeplitz property (Def.~\ref{Tsi}) with the Tokunaga coefficients
\be\label{eqn:IGWq0TokunagaTijo}
T_{i,j}^o=T_{j-i}^o, \quad \text{ where }~~T_k^o=c^{k-1} \quad (k=1,2,\hdots),
\ee
and
\be\label{eqn:IGWq0TokunagaTij}
T_{i,j}=T_{j-i}, \quad \text{ where }~~T_1=c^{c/(c-1)}-c-1 \quad  \text{ and } \quad T_k=a\,c^{k-1} \quad (k=2,3,\hdots)
\ee
with $a=(c-1)\big(c^{1/(c-1)}-1\big)$.
Finally, the strong Horton law \eqref{eq:Horton_mean} holds 
with Horton exponent $\,R=c^{c/(c-1)} = (1-q_0)^{-1/q_0}$.
\end{lem}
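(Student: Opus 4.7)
The plan is to compute the quantities $\pi_i$, $T_{i,j}^o$, and $T_{i,j}$ directly by substituting the explicit form of $\mathcal{IGW}(q_0)$ into the general recursions of Corollary \ref{cor:GWtreePIj} and Lemma \ref{lem:GWtreeTij}, and then to read off the Horton exponent from Theorem \ref{thm:HLSST}. The starting point is the identity $S(z) = q_0 + (1-q_0)z$, which was derived for $\mathcal{IGW}(q_0)$ in the proof of Theorem \ref{thm:completeGW} (see also equation \eqref{eqn:SzIGWq0}). Iterating this linear $S$ from $\sigma_0=0$ via Corollary \ref{cor:GWtreePIj} gives $\sigma_j = 1-(1-q_0)^j$, so $\pi_j = \sigma_j - \sigma_{j-1} = q_0(1-q_0)^{j-1} = q_0 c^{1-j}$, establishing the first claim.

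For the Tokunaga coefficients I would use $Q(z) = z + q_0(1-z)^\beta$ with $\beta = 1/q_0 = c/(c-1)$, giving $Q'(z) = 1-(1-z)^{\beta-1}$ and $Q''(z) = (\beta-1)(1-z)^{\beta-2}$. Setting $u = 1-\sigma_{j-1} = (1-q_0)^{j-1}$, the quantity $1-Q'(\sigma_{j-1})$ is simply $u^{\beta-1}$, so $T_{i,j}^o = \pi_i Q''(\sigma_{j-1})/(1-Q'(\sigma_{j-1}))$ collapses via the identity $(\beta-1)(c-1)=1$ to the clean Toeplitz form $c^{j-i-1}$. For $i<j-1$ the longer formula \eqref{eqn:GWtreeTij} simplifies using the key algebraic identity $q_0 c^\beta = (c-1)c^{\beta-1}$ (which is immediate from $q_0 = (c-1)/c$): the denominator telescopes to $q_0 u^\beta$, and combining with the numerator and adding $T_{i,j}^o$ produces $T_{i,j} = a c^{j-i-1}$ with $a = (c-1)(c^{1/(c-1)}-1)$. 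The boundary case $i=j-1$ is handled separately via \eqref{eqn:GWtreeTj1j}; the same denominator identity still applies, and after collecting terms and using $2q_0 = 2(c-1)/c$ one obtains $T_{j-1,j} = c^{c/(c-1)} - c - 1$, which is the stated value of $T_1$.

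For the Horton law I would verify the hypothesis $\limsup_j T_j^{1/j} < \infty$ of Theorem \ref{thm:HLSST}, which is immediate since $T_j = a c^{j-1}$ for $j\ge 2$. Writing $\gamma := c^{1/(c-1)}$ and $a = (c-1)(\gamma - 1)$, the generating function is
\begin{equation*}
\hat t(z) = -1 + (c\gamma - c + 1)z + \frac{acz^2}{1-cz}.
\end{equation*}
I would then substitute $z = 1/(c\gamma)$ directly; the constant and linear parts combine to $-(c-1)/(c\gamma)$, the rational part to $a/(c\gamma(\gamma-1))$, and the identity $a = (c-1)(\gamma-1)$ makes these cancel. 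Since $c \ge 2$ and $\gamma \ge 1$ on the range $q_0 \in [1/2,1)$, we have $w_0 = 1/(c\gamma) \in (0,1/2]$, so this is the relevant root and $R = 1/w_0 = c^{c/(c-1)} = (1-q_0)^{-1/q_0}$.

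The main obstacle is bookkeeping in the derivation of $T_{i,j}$ from \eqref{eqn:GWtreeTij} and \eqref{eqn:GWtreeTj1j}: the raw expressions contain six or seven terms and a priori there is no reason for them to collapse to a Toeplitz form, so the proof hinges on recognizing and systematically applying the two identities $(\beta-1)(c-1)=1$ and $q_0 c^\beta = (c-1)c^{\beta-1}$ at each stage. Once these cancellations are exposed, the remaining steps, including the zero-of-$\hat t$ calculation, are formal.
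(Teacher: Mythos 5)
Your proposal is correct and follows essentially the same route as the paper's own proof: iterate the linear map $S(z)=q_0+(1-q_0)z$ of \eqref{eqn:SzIGWq0} to get $\sigma_j=1-(1-q_0)^j$ and $\pi_j=q_0c^{1-j}$, substitute into Lem.~\ref{lem:GWtreeTij}, and apply Thm.~\ref{thm:HLSST} to $\hat{t}(z)=-1+(T_1+2)z+\frac{acz^2}{1-cz}$ with root $w_0=c^{-c/(c-1)}\in\left(0,\tfrac12\right]$. The only difference is that you make explicit the algebraic cancellations (via $(\beta-1)(c-1)=1$ and $q_0c^{\beta}=(c-1)c^{\beta-1}$, with $\beta=1/q_0$) that the paper's one-line ``obtained via substituting'' leaves to the reader, and these all check out.
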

\noindent The functions $a(q_0),c(q_0)$ and $R(q_0)$ are illustrated in Fig.~\ref{fig:acR}.
\begin{proof}
Equations \eqref{eqn:GWtreeSigmaj} and \eqref{eqn:SzIGWq0} imply $\,\sigma_i=1-(1-q_0)^i \,+(1-q_0)^i\,z$.
Hence, $\pi_i=\sigma_i-\sigma_{i-1}=q_0 (1-q_0)^{i-1}$.
Equations \eqref{eqn:IGWq0TokunagaTijo} and \eqref{eqn:IGWq0TokunagaTij} are obtained via
substituting $\pi_i$ and $\sigma_i$ into Lem.~\ref{lem:GWtreeTij}.

\medskip
\noindent
Finally, Thm.~\ref{thm:HLSST} implies the strong Horton law 
with the Horton exponent $R=1/w_0$, where $w_0$ is the only real zero of
the generating function $\hat{t}(z)$ in the interval $\left(0,{1 \over 2}\right]$.
We have
$$\hat{t}(z)=-1+(T_1+2)z+{acz^2 \over 1-cz},$$
which gives $w_0=c^{-c/(c-1)}$ and $\,R=c^{c/(c-1)}$. 
\end{proof}

\medskip
\section{Discussion}\label{sec:dis}%%%%%%%%%%%%
In this paper we described the invariance and attractor properties of combinatorial 
Galton-Watson trees with respect to the Horton pruning.
The results hold under the regularity Assumption~\ref{asm:reg} that prohibits 
large tail oscillations of the offspring probabilities $q_m$ that lead to
a non-smooth behavior of $S(z)$ at $1-$.
A sufficient condition under which the regularity
assumption holds is suggested in Lem.~\ref{lem:RegCond}.
Theorem~\ref{thm:completeGW} introduces a one-parameter family of invariant Galton-Watson
distributions $\mathcal{IGW}(q)$ and asserts that this family exhausts the Horton prune-invariant 
distributions within the examined regularity class.
The invariant family has a power-law tail of the offspring distribution, 
$q_k\sim C k^{-\alpha}$, with exactly
one distribution for every $\alpha\in(2,3]$, and also includes the critical
binary Galton-Watson tree measure.

A similar approach can be applied to the search of invariant measures in a 
broader class of {\it generalized dynamical prunings} on trees with edge lengths introduced and analyzed in \cite{KZ19,KZsurvey} and to the pruning operation 
studied in Evans \cite{Evans2005} and in Duquesne and Winkel \cite{Winkel2012}. 
Informally, a generalized dynamical pruning erases a tree from leaves down to the root 
at a rate that only depends on the descendant part of the tree. 
Most of such prunings, with a notable exception of the Horton pruning and the continuous
erasure of Neveu \cite{Neveu86}, do not satisfy semigroup property. 
It has been shown in \cite{KZ19} that the critical binary Galton-Watson tree 
with i.i.d. exponential edge lengths is invariant with respect to any admissible
generalized dynamical pruning. 
We conjecture that the Galton-Watson trees that have i.i.d. exponential edge
lengths and combinatorial shapes sampled from the 
invariant Galton-Watson measures $\mathcal{IGW}(q)$ 
introduced in this work (Def.~\ref{def:IGWq0}) are the only Galton-Watson measures 
invariant with respect to all admissible generalized dynamical prunings, up to
rescaling of the edge lengths.
Heuristically, this is supported by the rescaling argument (Rem.~\ref{rem:gap2})
applied to the function $S(z)$.
The Horton pruning of the present work only requires linearity of 
the function $S(z)$ on the grid $\sigma_k$, which is related to its discrete
combinatorial action.
This allows the existence of prune-invariant measures with oscillatory behavior,
outside of the invariant Galton-Watson family. 
However, a continuous pruning, for instance the continuous erasure of Neveu~\cite{Neveu86},
would constrain the function $S(z)$ on the entire interval $[0,1]$, hence leading to
the family of invariant Galton-Watson trees. 
This will be explored in a follow-up paper. 

\medskip
We are grateful to the anonymous referee for finding a problem with the first version of this paper, caused by the gap consisting of all critical Galton-Watson measures for which the limit $L=\lim\limits_{x \rightarrow 1-}\left({\ln{g(x)} \over -\ln(1-x)}\right)$ exists while the limit $S'(1)=\lim\limits_{x \rightarrow 1-}{1-S(x) \over 1-x}$ does not;
see Rem.~\ref{rem:gapS1L}. 
In particular, the referee suggested the following family of Horton prune-invariant
critical Galton-Watson tree distributions different from the invariant
distributions of Thm.~\ref{thm:completeGW}. 
For a given probability $q_0 \in (1/2,1)$, we let $q_1=0$, and
\be\label{eqn:typeSum}
q_m={1 \over m!A}\sum\limits_{n \in \mathbb{Z}}B^n \rho^{nm}e^{-\rho^n} \qquad m=2,3,\hdots,
\ee
where $\rho=1-q_0$.
Then, the second derivative of the generating function is equal to
$$Q''(z)=\sum\limits_{m=2}^\infty m(m-1)q_m z^{m-2} ={1 \over A}\sum\limits_{n \in \mathbb{Z}}B^n \rho^{2n} e^{-(1-z)\rho^n}, \qquad |z|<1.$$
Observe that
\be\label{eqn:ddQinv}
Q''\big(q_0+(1-q_0)z\big)=B^{-1}\rho^{-2}Q''(z).
\ee
Therefore, if $A>0$ and $B \in \big((1-q_0)^{-1}, \,(1-q_0)^{-2}\big)$ are selected so that
$\,Q(1)=Q'(1)=1$,
then $Q(z)$ will satisfy the invariance criterion \eqref{eqn:genQone1}. 
Such $B$ is found by solving
$$\sum\limits_{n \in \mathbb{Z}}B^n \left(1-\rho^{n+1}- (1+\rho^n-\rho^{n+1})e^{-\rho^n}\right)=0,$$
and $A=\sum\limits_{n \in \mathbb{Z}}B^n \rho^n \left(1- e^{-\rho^n}\right)$.

\medskip
\noindent
Hence, 
$$Q(z)=q_0+{1 \over A}\sum\limits_{n \in \mathbb{Z}}B^n \left(e^{-(1-z)\rho^n}-(1+\rho^n z)e^{-\rho^n}\right), \qquad |z|<1,$$
satisfies \eqref{eqn:genQoneRec} with
$$M(q_0)=(1-q_0)\big(1-Q'(q_0)\big)=B^{-1}.$$
Next, we show that this example belongs to the gap described in Rem.~\ref{rem:gapS1L}. Specifcally, we show that the limit $L$ exists while the limit $S'(1)$ does not.
First, Lem.~\ref{lem:LimitExists} applies, yielding the existence of limit $L=\lim\limits_{x \rightarrow 1-}\left({\ln{g(x)} \over -\ln(1-x)}\right)$.
Moreover, equation \eqref{eqn:LviaAq0} implies
$$L=2-{\ln M(q_0) \over \ln(1-q_0)}=2+{\ln{B} \over \ln(1-q_0)}.$$

\begin{figure}[t] %[p] [t]
\centering\includegraphics[width=0.45\textwidth]{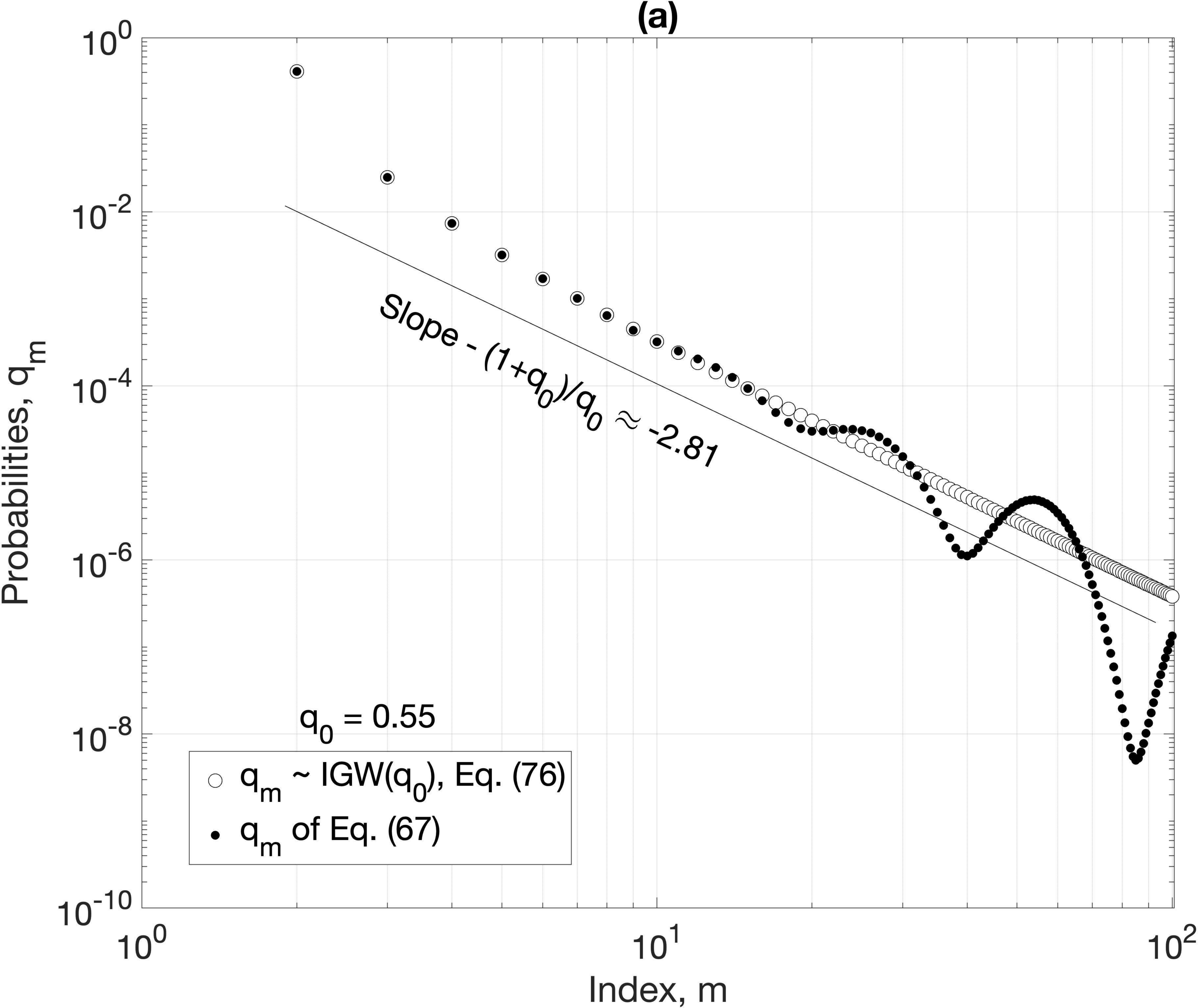}
\centering\includegraphics[width=0.45\textwidth]{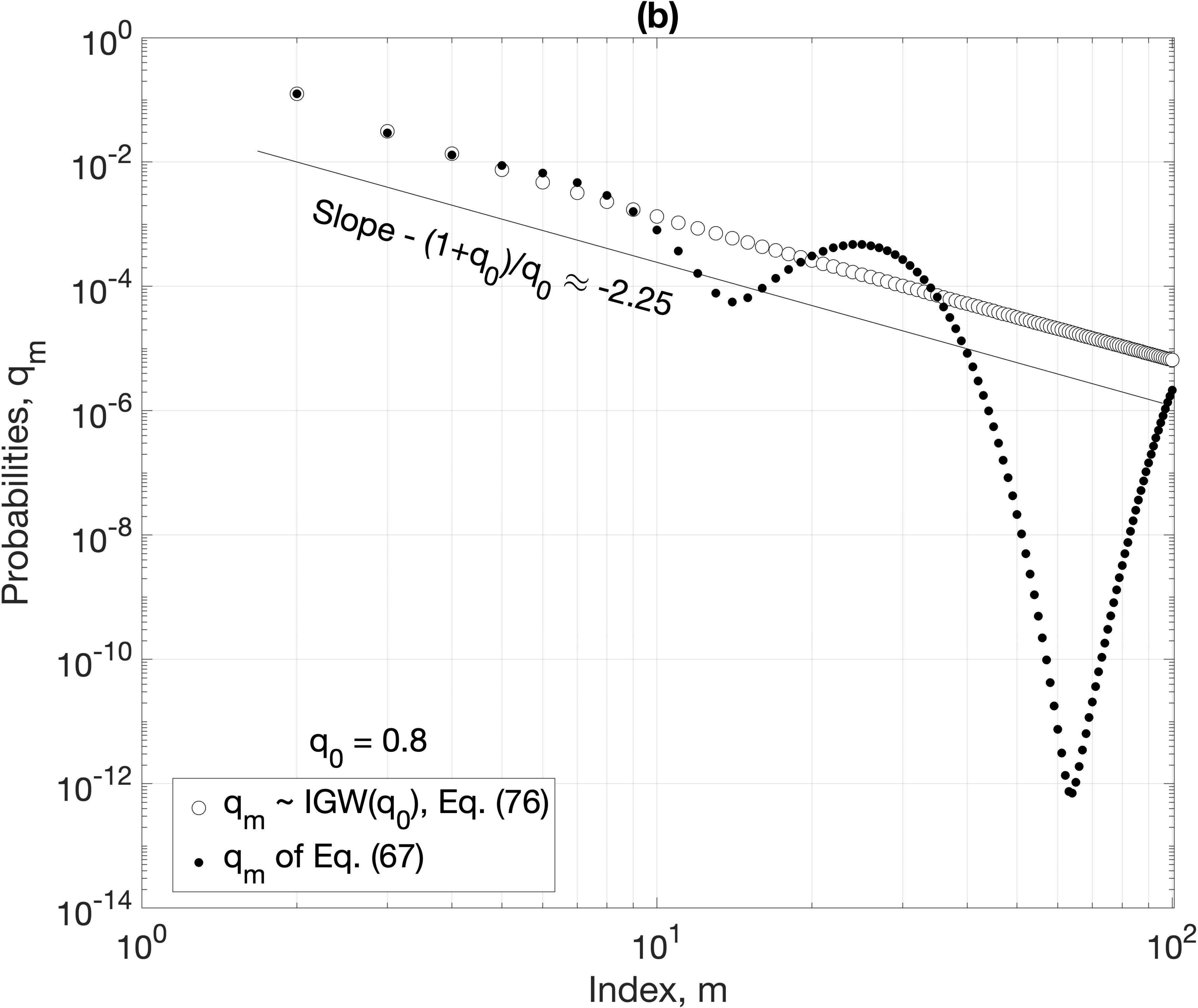}
\caption[Invariant measures: Illustration]
{Horton prune-invariant measures: Illustration.
Figure compares the offspring probabilities $q_m$, $m\ge 2$, of the invariant 
measure $\mathcal{IGW}(q_0)$ 
of Def.~\ref{def:IGWq0}, also given in \eqref{qm_cont} (open circles), with those of the invariant measure of \eqref{eqn:typeSum} that does not satisfy Assumption \ref{asm:reg} (black circles).
(a) $q_0 = 0.55$, (b) $q_0=0.8$.}
\label{fig:ex}
\end{figure}

\medskip
\noindent
Now, we show that the limit $S'(1)=\lim\limits_{x \rightarrow 1-}{1-S(x) \over 1-x}$ does not exist, whence Assumption \ref{asm:reg} is not satisfied.
Since $\,{1 \over A}\sum\limits_{n \in \mathbb{Z}}B^n \rho^n \left(1- e^{-\rho^n}\right)=1$, we have for $x \in [0,1)$,
\be\label{eqn:1Q1xRevEx}
{1-Q(x) \over 1-x}={1 \over A}\sum\limits_{n \in \mathbb{Z}}B^n \rho^n \left(1- e^{-\rho^n}\right)-{1-Q(x) \over 1-x}
={1 \over A}\sum\limits_{n \in \mathbb{Z}}B^n\rho^n \left(1-{1-e^{-(1-x)\rho^n} \over (1-x)\rho^n}\right).
\ee
Also, for $x \in [0,1)$, 
\be\label{eqn:1dQRevEx}
1-Q'(x)={1 \over A}\sum\limits_{n \in \mathbb{Z}}B^n \rho^n \left(1- e^{-(1-x)\rho^n}\right).
\ee
For a given $\alpha \in [0,1)$, consider a sequence $x_m=1-\rho^{m+\alpha}$ for $m \in \mathbb{N}$, then, equation \eqref{eqn:1Q1xRevEx} implies
$${1-Q(x_m) \over 1-x_m}={1 \over A}\sum\limits_{n \in \mathbb{Z}}B^n\rho^n \left(1-{1-e^{-\rho^{n+m+\alpha}} \over \rho^{n+m+\alpha}}\right)
={1 \over A}B^{-(m+\alpha)}\rho^{-(m+\alpha)}C(\alpha),$$
where
\be\label{eqn:CaRevEx}
C(\alpha)=\sum\limits_{n \in \mathbb{Z}}B^{n+\alpha} \rho^{n+\alpha} \left(1-{1-e^{-\rho^{n+\alpha}} \over \rho^{n+\alpha}}\right).
\ee
Similarly, \eqref{eqn:1dQRevEx} implies
$$1-Q'(x_m)={1 \over A}\sum\limits_{n \in \mathbb{Z}}B^n\rho^n \left(1-e^{-\rho^{n+m+\alpha}}\right)
={1 \over A}B^{-(m+\alpha)}\rho^{-(m+\alpha)}D(\alpha),$$
where
\be\label{eqn:DaRevEx}
D(\alpha)=\sum\limits_{n \in \mathbb{Z}}B^{n+\alpha} \rho^{n+\alpha} \left(1-e^{-\rho^{n+\alpha}} \right).
\ee
Hence,
$${1-Q(x_m) \over (1-x_m)\big(1-Q'(x_m)\big)}={C(\alpha) \over D(\alpha)} \qquad \forall m \in \mathbb{N}$$
with $C(\alpha)$ and $D(\alpha)$ as defined in \eqref{eqn:CaRevEx} and \eqref{eqn:DaRevEx}.
Thus, since ${C(\alpha) \over D(\alpha)}$ is not constant for $\alpha \in [0,1)$, the limit in \eqref{eqn:RegAsm} does not exist, and the same is true about the limit $S'(1)$.
Hence, in this example, Thms.~\ref{thm:completeGW} and \ref{thm:IGWattractor} as currently stated do not apply
since Assumption \ref{asm:reg} does not hold.

\bigskip
\noindent
The invariant measures \eqref{eqn:typeSum} are in fact closely related to 
the $\mathcal{IGW}(q)$ measures of Def.~\ref{def:IGWq0}.
Specifically, consider a continuous integral version of \eqref{eqn:typeSum}, by 
selecting $q_0 \in (1/2,1)$ and letting $q_1=0$ and 
\be\label{eqn:typeInt}
q_m={1 \over m!A}\int\limits_{-\infty}^\infty B^w \rho^{wm} e^{-\rho^w}\,dw \qquad m=2,3,\hdots,
\ee 
where $\rho=1-q_0$. Here too,
$$Q''(z)=\sum\limits_{m=2}^\infty m(m-1)q_m x^{m-2}={1 \over A}\int\limits_{-\infty}^\infty B^w \rho^{2w}e^{-(1-x)\rho^w}\,dw$$
and so it satisfies \eqref{eqn:ddQinv}, while $\,Q(1)=Q'(1)=1$ causes  $Q(z)$ to satisfy \eqref{eqn:genQone1}.
Analogously to \eqref{eqn:typeSum}, the constants $A>0$ and $B \in \big((1-q_0)^{-1}, \,(1-q_0)^{-2}\big)$ are found by solving
\be\label{eqn:IntegralC}
\int\limits_{-\infty}^\infty B^w \left(1-\rho^{w+1}- (1+\rho^w-\rho^{w+1})e^{-\rho^w}\right)\,dw=0
\ee
for $B$, and letting
\be\label{eqn:IntegralA}
A=\int\limits_{-\infty}^\infty B^w \rho^w \left(1- e^{-\rho^w}\right)\,dw.
\ee
One can show that equation \eqref{eqn:IntegralC} has a unique solution using monotonicity of the integral on the left hand side in the equation \eqref{eqn:IntegralC} as a function of $B \in \big((1-q_0)^{-1}, \,(1-q_0)^{-2}\big)$. The uniqueness of $B$ implies the uniqueness of $A$ in \eqref{eqn:IntegralA}.

\medskip
\noindent
In this situation, one easily shows that the limit $S'(1)$ exists. 
Remarkably, the unique offspring distribution $q_m$ in \eqref{eqn:typeInt} 
is the offspring distribution of $\mathcal{IGW}(q_0)$.
Indeed, in equation \eqref{eqn:completeGWqkZipf} we have for $m\geq 2$,
\be \label{qm_cont}
q_m={(1-q_0) \Gamma(m-1/q_0) \over q_0 \Gamma(2-1/q_0) \,m!}={1 \over m!A}\int\limits_{-\infty}^\infty B^w \rho^{wm} e^{-\rho^w}\,dw
\ee 
with $\rho=1-q_0$, $~A={q_0 \Gamma(2-1/q_0) \over -(1-q_0)\ln(1-q_0)}$, and $B=(1-q_0)^{-1/q_0}$.

Figure~\ref{fig:ex} compares selected invariant measures $\mathcal{IGW}(q_0)$ of Def.~\ref{def:IGWq0},
also given in \eqref{qm_cont}, with the invariant measures of 
\eqref{eqn:typeSum} that do not satisfy Assumption \ref{asm:reg}.
Both types of measures decay in general as the power law $m^{-(1+q_0)/q_0}$,
although the measures of \eqref{eqn:typeSum} fluctuate around this general trend.
The amplitude of the fluctuations (on logarithmic scale) increases with $q_0$.
These fluctuations are related to the periodic function $\alpha(y)$ in the proof of
Thm.~\ref{thm:completeGW}; they are inevitable in the invariant measures that do not
satisfy Assumption~\ref{asm:reg} (see Rem.~\ref{rem:gap2}).

\medskip
\section*{Acknowledgements}%%%%%%%%%%%%
The authors are grateful to the anonymous referee for finding a gap in the original
version of the paper and suggesting an example of invariant offspring distribution with oscillatory 
tail (see Discussion and Fig.~\ref{fig:ex}) and to Ed Waymire for multiple discussions about the topic of random self-similar trees. 
This research is supported by NSF awards DMS-1412557 (YK) and EAR-1723033 (IZ).

%+++++++++++++++++++++++++++++++++++++++++++++++++++++++++++++++++++++++++++++++++++++++


\begin{thebibliography}{99}

\bibitem{AN_book}
K.\,B.~Athreya and P.\,E.~Ney,  
\newblock{\em Branching processes}
\newblock Courier Corporation (2004). 

\bibitem{BWW00}
G.\,A.~Burd, E.\,C.~Waymire, and R.\,D.~Winn,
\newblock{\em A self-similar invariance of critical binary Galton-Watson trees}
\newblock Bernoulli, {\bf 6} (2000), 1--21.

\bibitem{DK94}
L.~Devroye and P.~Kruszewski,
\newblock{\em A note on the Horton-Strahler number for random trees},
\newblock Inform. Processing Lett., {\bf 56} (1994), 95--99. 
\url{https://doi.org/10.1016/0020-0190(95)00114-R}

\bibitem{DW2007}
T.~Duquesne and M.~Winkel,
\newblock{\em Growth of L{\'e}vy trees}
\newblock Probab.Theory Related Fields, {\bf 139} (2007), 313--371. 

\bibitem{Winkel2012}
T.~Duquesne and M.~Winkel,
\newblock{\em Hereditary tree growth and L{\'e}vy forests}
\newblock Stochastic Processes and their Applications, Vol. {\bf 129}(10) (2019), 3690--3747. 
\url{https://doi.org/10.1016/j.spa.2018.10.007}

\bibitem{Evans2005}
S.\,N.~Evans,
\newblock{\em Probability and real trees: Ecole d'\'{e}t\'{e} de probabilit\'{e}s de Saint-Flour}
\newblock Lectures on Probability Theory and Statistics. Springer (2008).

\bibitem{EPW06}
S.\,N.~Evans, J.~Pitman, and A.~Winter, 
\newblock{\em Rayleigh processes, real trees, and root growth with re-grafting}
\newblock Probability Theory and Related Fields, {\bf 134}(1) (2006), 81--126.

\bibitem{HeWinkel2014}
H.~He and M.~Winkel,
\newblock{\em Invariance principles for pruning processes of Galton-Watson trees}
\newblock  preprint {\it arXiv:1409.1014}
\url{https://arxiv.org/abs/1409.1014}

\bibitem{Horton45}
R.\,E.~Horton, 
\newblock{\em Erosional development of streams and their drainage basins: Hydrophysical approach to quantitative morphology}
\newblock Geol. Soc. Am. Bull., {\bf 56} (1945), 275--370.
\url{https://doi.org/10.1130/0016-7606(1945)56[275:EDOSAT]2.0.CO;2}

\bibitem{Kesten87}
H.~Kesten, 
\newblock {\em Subdiffusive behavior of random walk on a random cluster} 
\newblock Ann. Inst. H. Poincar\'{e} Probab. Statist., {\bf 22}(4) (1987), 425--487.

\bibitem{KZ16}
Y.~Kovchegov and I.~Zaliapin,
\newblock{\em{Horton Law in Self-Similar Trees}}
\newblock  Fractals, Vol. 24, No. 2 (2016) 1650017.
\url{https://doi.org/10.1142/S0218348X16500171}

\bibitem{KZ19}
Y.~Kovchegov and I.~Zaliapin,
\newblock{\em Dynamical pruning of rooted trees with applications to 1D ballistic annihilation}
\newblock  Journal of Statistical Physics, {\bf 181}(2) (2020), 618--672.
\url{doi: 10.1007/s10955-020-02593-1} 

\bibitem{KZsurvey}
Y.~Kovchegov and I.~Zaliapin,
\newblock{\em{Random Self-Similar Trees: A mathematical theory of Horton laws}}
\newblock Probability Surveys, {\bf 17} (2020), 1--213.
\url{https://doi.org/10.1214/19-PS331}

\bibitem{LeJan91}
Y.~Le Jan, 
\newblock{\em Superprocesses and projective limits of branching Markov process}
\newblock Annales de l'IHP Probabilit\'es et statistiques, {\bf 27}(1) (1991), 91--106.

\bibitem{MG08}
M.~McConnell and V.~Gupta,
\newblock{\em A proof of the Horton law of stream numbers for the Tokunaga model of river networks}
\newblock Fractals, {\bf 16} (2008), 227--233.

\bibitem{Neveu86}
J.~Neveu, 
\newblock {\em Erasing a branching tree} 
\newblock Advances in Applied Probability, {\bf 1} (1986), 101--108.

\bibitem{NP}
J.~Neveu and J.~Pitman,
\newblock{\em Renewal property of the extrema and tree property of the excursion of a one-dimensional Brownian motion}
\newblock S\'eminaire de Probabilit\'es XXIII, {\bf 1372} of the series Lecture Notes in Mathematics, (1989) 239--247, Springer, Berlin.

\bibitem{NP2}
J.~Neveu and J.~Pitman, 
\newblock{\em The branching process in a Brownian excursion}
\newblock S\'eminaire de Probabilit\'es XXIII, {\bf 1372} of the series Lecture Notes in Mathematics,
(1989) 248--257 Springer, Berlin.

\bibitem{NTG97}
W.\,I.~Newman, D.\,L.~Turcotte, and A.\,M.~Gabrielov,
\newblock{\em Fractal trees with side branching}
\newblock Fractals, {\bf 5} (1997), 603--614.
\url{https://doi.org/10.1142/S0218348X97000486}

\bibitem{Pec95}
S.\,D.~Peckham,
\newblock{\em New results for self-similar trees with applications to river networks}
\newblock Water Resour. Res., {\bf 31} (1995), 1023--1029.

\bibitem{Strahler}
A.\,N.~Strahler,
\newblock{\em Quantitative analysis of watershed geomorphology}
\newblock Trans. Am. Geophys. Un.,  {\bf 38} (1957), 913--920.
\url{https://doi.org/10.1029/TR038i006p00913}


\end{thebibliography}
\end{document}